\newtheorem{theorem}{Theorem}[section]
\newtheorem{corollary}[theorem]{Corollary}
\newtheorem{proposition}[theorem]{Proposition}
\theoremstyle{definition}
\newtheorem{definition}[theorem]{Definition}
\theoremstyle{remark}
\newtheorem{remark}[theorem]{Remark}
\newtheorem{example}[theorem]{Example}
\numberwithin{equation}{section}
\newcommand\blfootnote[1]{\begingroup\renewcommand\thefootnote{}\footnote{#1}\addtocounter{footnote}{-1}\endgroup}
\begin{document}
% ------------------------------------------------------------------------
\title{\vspace{-2.0cm}\bf\Large The Ambrosetti-Prodi periodic problem: \\ Different routes to complex dynamics\footnote{This work was performed under the auspices of the Grup\-po Na\-zio\-na\-le per l'Anali\-si Ma\-te\-ma\-ti\-ca, la Pro\-ba\-bi\-li\-t\`{a} e le lo\-ro Appli\-ca\-zio\-ni (GNAMPA) of the Isti\-tu\-to Na\-zio\-na\-le di Al\-ta Ma\-te\-ma\-ti\-ca (INdAM)}
}
\author{{\bf\large Elisa Sovrano and Fabio Zanolin}
\vspace{1mm}\\
{\it\small Department of Mathematics, Computer Science and Physics,}\\
{\it\small University of Udine}\\
{\it\small via delle Scienze 206, 33100 Udine, Italy}\\
{\it\small e-mail: sovrano.elisa@spes.uniud.it}\\
{\it\small e-mail: fabio.zanolin@uniud.it}
}

\date{}

\maketitle

\vspace{-2mm}

%--------------------
%--- Abstract
\begin{abstract}
We consider a second order nonlinear ordinary differential equation of the form $u'' + f(u) = p(t)$ where the forcing term $p(t)$ is a $T$-periodic function and the nonlinearity $f(u)$ satisfies the properties of Ambrosetti-Prodi problems.
We discuss the existence of infinitely many periodic solutions as well as the presence of complex dynamics under different conditions on $p(t)$
and by using different kinds of approaches.
On the one hand, we exploit the Melnikov's method and, on the other hand, the concept of ``topological horseshoe''.
\blfootnote{\textit{AMS Subject Classification:} 34C28, 34C25, 54H20, 37G20.}
\blfootnote{\textit{Keywords:} Ambrosetti-Prodi problem, periodic solutions, symbolic dynamics, topological horseshoes, homoclinic trajectories.}
\end{abstract}
%
%
%-----------------------------
%--- Section 1
\section{Introduction}
This paper is devoted to the study of a classical problem in the theory of nonlinear differential equations: the Ambrosetti-Prodi problem.
Motivated by a note appeared in 2011 \cite{Am-11} of Antonio Ambrosetti, in honor of Giovanni Prodi,
we will focus our attention to the case of second order ODEs with periodic coefficients.
Ambrosetti in \cite{Am-11} recalled how, starting with the academic year 1970-1971,
Giovanni Prodi delivered in Pisa a series of lectures on Nonlinear Analysis.
In particular, the first year of the course was devoted to the study of local and global inversion theorems
and the geometry of the infinite dimensional normed spaces.
Indeed, the work made in that period
by Ambrosetti and Prodi on the inversion of functions with singularities
in Banach spaces
led to the publication in 1972 of a seminal paper \cite{AmPr-72} which
can be considered as a milestone since it has influenced the research in
the field of Nonlinear Analysis up to the present days.
The theorems in \cite{AmPr-72} allowed to face new elliptic boundary value problems.
Indeed, the first application concerned the existence and multiplicity of solutions
for a Dirichlet problem with asymmetric nonlinearities
whose derivative crosses the first eigenvalue.
This result received much attention by the mathematical community
and since then problems with these nonlinearities are called ``\emph{Ambrosetti-Prodi problems}'' (briefly written as AP~problems).
It is interesting to observe that the theorems in \cite{AmPr-72} have a very general feature
and therefore they could be applied to different kinds of boundary value problems.

Another meaningful topic pointed out by Ambrosetti in \cite{Am-11} is a list of open questions regarding global inversion theorems
and their applications. With this respect, this work is inspired by one of these:
\blockquote{``To study the periodic case for an ordinary differential second order equation $-u''=g(u)+h(t)$
where $g$ satisfies \[-\infty<\lim_{u\to-\infty}\frac{g(u)}{u}<\lambda_{1}<\lim_{u\to+\infty}\frac{g(u)}{u}<\lambda_{2}\]
and $h$ is $T$-periodic
(this problem was posed by Prodi himself in his course).''\footnote{``\emph{Studiare il caso periodico
per un'equazione ordinaria del secondo ordine con $g$ verificante $(g.1)$ ed $h$ periodica (questo problema fu posto dallo stesso Prodi nel suo corso).}'' Taken from \cite[p.~13]{Am-11}.}}

As observed by Ambrosetti, the AP~problem in a periodic setting seems an interesting issue to be studied further.
It is to notice that several significant results have been already achieved for the periodic case and they concern the existence,
the multiplicity and the stability of periodic solutions (see \cite{DPMaMu-92, FaMaNk-86, NjOm-03, Or-90}).
Accordingly, here we shall focus on some aspects of the AP~periodic problem which, in our opinion,
has not yet been discussed in detail. Our purpose is to prove that second order ODEs,
with periodic coefficients and nonlinearities satisfying conditions of Ambrosetti-Prodi type,
may present solutions with a very complicated behavior as the phase portrait in Fig.~\ref{fig-0} suggests.

\begin{figure}[htb]
          \centering
          \includegraphics[width=0.7\textwidth]{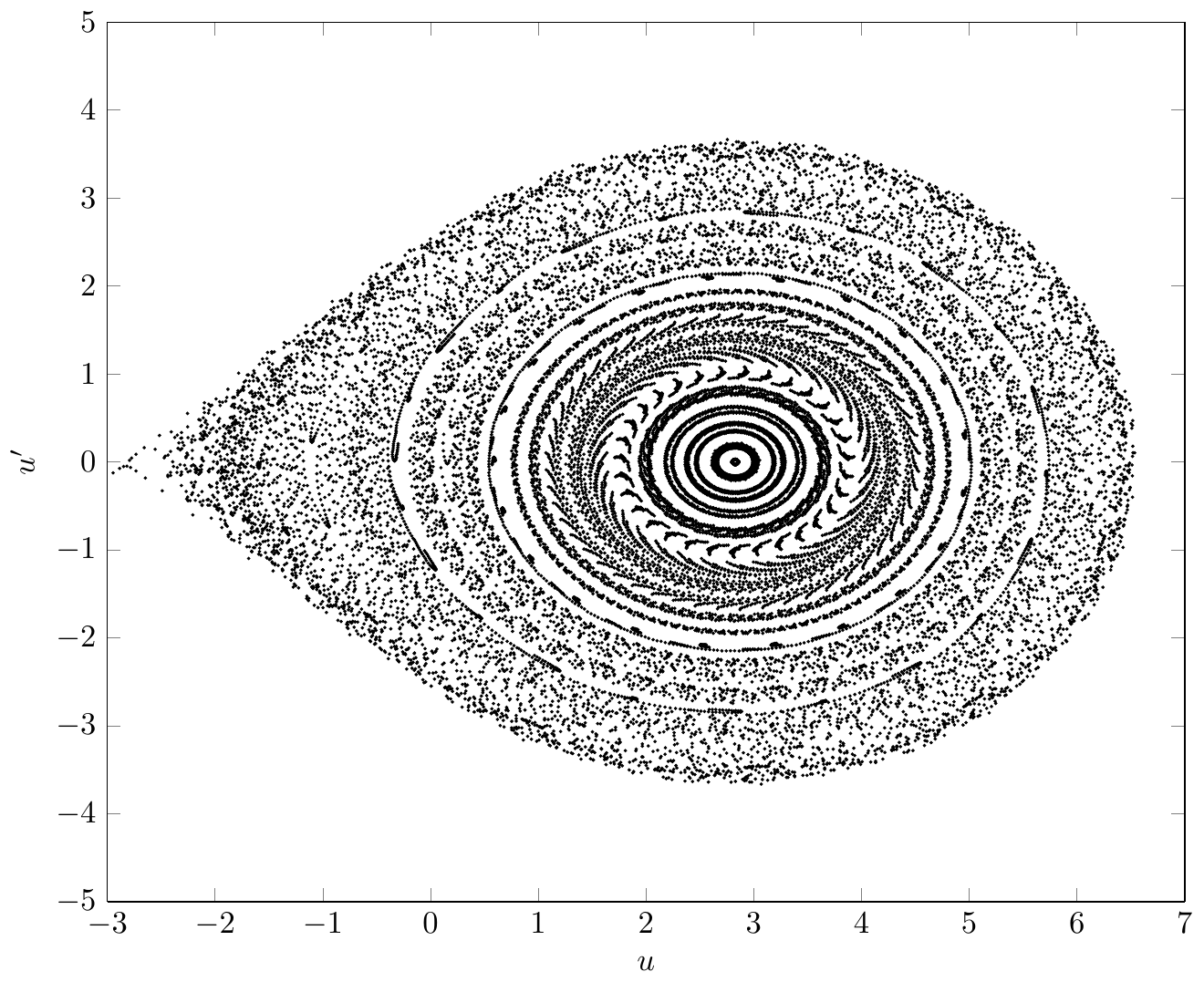}
\caption{Evolution of $u''+\sqrt{1+u^{2}} -1= 2+\varepsilon\sin(\omega t)$
in terms of the iterates of the Poincar\'e map, with $\varepsilon=0.01$, $\omega=10$ and varying 500
initial conditions $(u(0),u'(0))$ where $u(0)$ is within the interval $[-4,6]$ and $u'(0)=0$. The figure displays the
typical alternation of regions of stability and instability or randomness which are common in Hamiltonian systems (cf.~\cite{Mo-73}).}
\label{fig-0}
\end{figure}

To begin, we shall take a historical viewpoint and, also, we shall give some preliminary definitions, in order to clarify the issues we are going to study.

%------------------------
%--- Paragraph 1
\bigskip
\paragraph{The Dirichlet AP problem}{The AP~problem considered in \cite{Am-11,AmPr-72,AmPr-93} involves the following Dirichlet boundary value problem:
\begin{equation}\label{eq-0.1}
\begin{cases}
-\Delta u = f(u) + g(x) &\text{in }\Omega,\\
u=0             &\text{on }\partial\Omega,
\end{cases}
\end{equation}
where $\Omega$ is a bounded open set sufficiently smooth, $g\in C^{0,\alpha}(\bar{\Omega})$ with $\alpha$ a fixed number on $]0,1[$ and $f\in C^{2}(\mathbb{R})$ is a suitable asymmetric function, whose derivative crosses the first eigenvalue associated with the Laplacian operator with zero Dirichlet boundary condition as $s$ goes from $-\infty$ to $+\infty.$ In particular, $f$ is a strictly convex real valued function such that $f(0)=0$ and
\begin{equation}\label{eq-0.2}
0<f'(-\infty)<\lambda_1^{\Omega}(-\Delta)<f'(+\infty)<\lambda_2^{\Omega}(-\Delta),
\end{equation}
where, by definition, $f'(-\infty)=\displaystyle{\lim_{s\to-\infty}f'(s)}$, $f'(+\infty)=\displaystyle{\lim_{s\to+\infty}f'(s)}$ and $\lambda_1^{\Omega}(-\Delta)$, $\lambda_2^{\Omega}(-\Delta)$ are the first two eigenvalues of the eigenvalue problem:
\[\begin{cases}
-\Delta u = \lambda u   &\text{in }\Omega,\\
u=0             &\text{on }\partial\Omega.
\end{cases}
\]
When a function $f$ satisfies these conditions, it is common to said that the problem \eqref{eq-0.1} is of Ambrosetti-Prodi type.
Problems like this one are also called problems with jumping nonlinearities,
in order to stress the existence of two different asymptotes at $\pm\infty$ (cf.~\cite{Fu-75,Fu-76}).
The main result in \cite{AmPr-72} provides the existence of a $C^{1}$
manifold $\mathcal{M}$ of codimension one which separates $C^{0,\alpha}(\bar{\Omega})$
into two disjoint open regions $A_{1}$ and $A_{2}\,$: $C^{0,\alpha}(\bar{\Omega})=A_{1}\cup \mathcal{M}\cup A_{2}$
such that problem \eqref{eq-0.1} has no solutions if $g\in A_{1}$, exactly one solution if $g\in\mathcal{M}$, exactly two solutions if $g\in A_{2}$.

In \cite{ChHa-82}, the same approach is used to give a detailed discussion about two-point boundary value problems, that are associated with nonlinear ODEs of Ambrosetti-Prodi type, where, obviously, conditions in \eqref{eq-0.2} become
\[0<f'(-\infty)<\left(\frac{\pi}{T}\right)^{2}<f'(+\infty)<\left(\frac{2\pi}{T}\right)^{2}.\]

As a result of \cite{AmPr-72}, a very large amount of papers was written on the number of solutions of boundary value problems where the derivative of the nonlinearity jumps the first eigenvalue $\lambda_1^{\Omega}(-\Delta)$ or higher eigenvalues.
Taking into account the survey by de Figueiredo \cite{DF-80}
and the impossibility to give a complete list of references, not even of the main contributions after 1972,
we limit ourselves to quote here the earliest works in the rich mathematical literature about the AP problems.
Hence, we recall that in 1975, Berger and Podolak wrote the function $g$ as $g(x)=\mu\phi_{1}(x)+h(x)$ where $\phi_{1}$ is the first positive eigenfunction associated with $\lambda_1^{\Omega}(-\Delta)$ and $\int_{\Omega}\phi_{1}(x)h(x)dx=0$. Thanks to this decomposition, the manifold $\mathcal{M}$ was characterized by the real parameter $\mu$. In \cite{BePo-75}, under these assumptions, they proved that there exists $\hat{\mu}$ such that problem \eqref{eq-0.1} has no solutions, exactly one or two solutions according as $\mu<\hat{\mu}$, $\mu=\hat{\mu}$ or $\mu>\hat{\mu}$.
In the same year, Kazdan and Warner dealt with the problem of Berger and Podolak assuming a more general second order operator with respect to the Laplacian one and weakening the conditions in \eqref{eq-0.2} on $f$. However, in \cite{KaWa-75}, only the existence of at least one solution if $\mu>\hat{\mu}$ and zero solutions if $\mu< \hat{\mu}$ was proved.
The multiplicity result, which is the characteristic feature of the AP problems,
was then obtained in a weaker form, independently, by Dancer \cite{Da-78} in 1978 and by Amann and Hess \cite{AmHe-79} in 1979.
Subsequently, as we can see by the results of Lazer and McKenna \cite{LaMK-81}, Solimini \cite{So-85} and others,
mathematicians began to wonder about what can happen when the nonlinearity jumps more than one eigenvalue
and so higher multiplicity results can be obtained.
Questions of resonance and non-resonance for asymmetric
jumping nonlinearities can nowadays be also interpreted in the light of the interaction with the so-called
Dancer-Fu\v{c}ik spectrum starting with the pioneering works of Dancer \cite{Da-76,Da-76-77} and Fu\v{c}ik \cite{Fu-76} (see \cite{Ma-07}
for a detailed presentation of this topic).
}

%------------------------
%--- Paragraph 2
\bigskip
\paragraph{The periodic AP problem}{In 1986, the description by means of a real parameter about the set of the solutions of PDEs with Dirichlet boundary conditions achieved in \cite{AmHe-79,BePo-75,Da-78,KaWa-75} is reflected in the work \cite{FaMaNk-86}  by Fabry, Mawhin and Nkashama. In \cite{FaMaNk-86} the Li\'enard ODE considered is
\begin{equation}\label{eq-0.3}
u'' + q(u) u' + f(t,u) = \mu,
\end{equation}
where $\mu\in\mathbb{R}$, $q$ and $f$ are continuous functions, $f$ is $2\pi$-periodic in $t$ and
\begin{equation}\label{eq-0.3.1}
\lim_{|s|\to+\infty}f(t,s)=+\infty \quad \text{uniformly in $t.$}
\end{equation}
Under these conditions, there exists $\hat{\mu}\in\mathbb{R}$ such that equation \eqref{eq-0.3} has no $2\pi$-periodic solution if $\mu<\hat{\mu}$, at least one $2\pi$-periodic solution if $\mu=\hat{\mu}$, at least two $2\pi$-periodic solutions if $\mu>\hat{\mu}$ (cf.~\cite[Cor.~1]{FaMaNk-86}).

On the other hand, Ortega in 1989 deals with the equation:
\begin{equation}\label{eq-0.4}
u''+cu'+f(t,u)=\mu,
\end{equation}
where $c>0$, $f$ is a $T$-periodic function in $t$ satisfying \eqref{eq-0.3.1} and it is strictly convex in $u$:
\[\left(\frac{\partial f}{\partial u}(t,u_{1})-\frac{\partial f}{\partial u}(t,u_{2})\right)(u_{1}-u_{2})>0\,,\quad \text{if $u_{1}\not=u_{2}$, $t\in\mathbb{R}$}.
\]
If, moreover, $f$ is bounded below and
\begin{equation}\label{eq-0.5}
\frac{\partial f}{\partial u}(t,+\infty)\leq \left(\frac{\pi}{T}\right)^{2}+\left(\frac{c}{2}\right)^{2},\quad t\in\mathbb{R},
\end{equation}
the description of the set of the periodic solutions and the study of their stability is done in \cite{Or-89}. Indeed, there exists $\hat{\mu}\in\mathbb{R}$ such that, if $\mu>\hat{\mu}$, \eqref{eq-0.4} has exactly two $T$-periodic solutions, one asymptotically stable and another unstable; if $\mu=\hat{\mu}$, \eqref{eq-0.4} has exactly one $T$-periodic which is not asymptotically stable; if $\mu<\hat{\mu}$, every solution of \eqref{eq-0.4} is unbounded (cf. \cite[Th.~2.1]{Or-89}).
At this point a natural question arises, about improving the upper bound in \eqref{eq-0.5} to the constant
$(2\pi/T)^{2}+c^{2}/4$, which, in this context, plays the role of the second eigenvalue.
Indeed, in 1990 Ortega read in the periodic context the result of \cite{AmPr-72} and, furthermore, he carried out an analysis of the stability properties of $T$-periodic solutions. In fact, in \cite{Or-90}, the following equation is considered:
\begin{equation}\label{eq-0.6}
u''+cu'+f(u)=p(t),
\end{equation}
where $p$ is a continuous and $T$-periodic function and the nonlinear term $f$ is of Ambrosetti-Prodi type, in other words, $f\in C^{2}(\mathbb{R})$ satisfies $f''(s)>0$ for each $s\in\mathbb{R}$ and the adapted conditions in \eqref{eq-0.2}, namely
\begin{equation}
-\infty\leq f'(-\infty)<0<f'(+\infty)\leq \left(\frac{2\pi}{T}\right)^{2}+\left(\frac{c}{2}\right)^{2}.
\end{equation}
Under these assumptions, by denoting with $C_{T}$ the set of $T$-periodic and continuous functions endowed with the sup norm, there exists a closed connected $C^{1}$-manifold $\mathcal{M}$ of codimension one in $C_{T}$ such that $C_{T}\setminus\mathcal{M}$ consists exactly of two connected components $A_{1}$ and $A_{2}$ such that equation \eqref{eq-0.6} has no $T$-periodic solution if $p\in A_{1}$, exactly one solution if $p\in\mathcal{M}$, exactly two solutions if $p\in A_{2}$ (cf.~\cite[Th.~0]{Or-90}).

Another contribution to the treatment of AP periodic problems arises from the work \cite{DPMaMu-92} of Del Pino,
Man\'asevich and Murua which generalizes the results in \cite{LaMK-90}.
In \cite{DPMaMu-92} the number of periodic solutions is in relation with the number of eigenvalues jumped by the nonlinearity.

Finally, as a matter of fact, the study of systems with asymmetric nonlinearities
has motivated a great deal of works that investigate the existence and the multiplicity
of periodic and subharmonic solutions
(see \cite{BoZa-13,FoGh-10,NjOm-03,Or-96,Re-97,ReZa-96,Wa-00,ZaZa-05} and the references therein).
A motivation for these researches, besides the connection with the periodic Dancer-Fu\v{c}ik spectrum,
comes from the topics related to the Lazer-McKenna suspension bridge models \cite{LaMK-90}.
}

%------------------------
%--- Paragraph 3
\bigskip
\paragraph{Basics on chaotic dynamics}{
The main part of our paper is devoted to a discussion about the presence of ``chaotic-like'' solutions for the
periodic AP problem. Such a complex behavior for the solutions will be described in terms of the discrete
dynamical system associated with the Poincar\'{e} map of the problem.
Since in the literature there are different notions of chaos, it is important to specify which kind of chaos we refer.
However, it is interesting to observe that, despite the different definitions considered by several authors,
there is a common feature usually associated with the concept of deterministic chaos which is the possibility
to reproduce a coin-tossing sequence by means of the iterates of a given map.

\blockquote{``The laws of chance, with good reason, have traditionally been expressed in terms of flipping a coin.
Guessing whether heads or tails is the outcome of a coin toss is the paradigm of pure chance.'' (Stephen Smale, \cite{Sm-98}).}

\noindent
From this point of view, an abstract scheme often used to describe symbolic dynamics is given by the \textit{shift map}
(also called Bernoulli shift or shift automorphism) on the sets of two-sided sequences of $m$ symbols.
We recall that, given a collection of $m\geq 2$ symbols, namely $\{0,\dots,m-1\},$ we denote by $\Sigma_m:=\{0,\dots,m-1\}^{\mathbb{Z}}$ the set of all two-sided sequences $S=(s_i)_{i\in\mathbb{Z}}$ with $s_i\in \{0,\dots,m-1\}$ for each $i.$
The set $\Sigma_m$ is endowed with a standard metric that makes it a compact space with the product topology.
The shift map $\sigma: \Sigma_m \to \Sigma_m$ is such that $\sigma(S) = S'=(s'_i)_{i\in\mathbb{Z}}$ with $s'_i = s_{i+1}$ for all $i\in {\mathbb Z}.$
Note that the shift map on $\Sigma_m$ is considered as a model for chaotic dynamics because it contains all the features which usually
characterize the concept of chaos as a whole, such as transitivity, density of periodic points, positive topological entropy and
so on (see \cite{AKM-65, De-89, GuHo-83, Wi-03}).

A way to show a possible chaotic behavior for a map $\phi$ on a metric space is to prove the existence of a \textit{compact invariant set} $\Lambda \subseteq X$ and a \textit{continuous and surjective map} $\Pi : \Lambda \to \Sigma_m$ such that $\Pi\circ \phi(w)= \sigma\circ\Pi(w),$ for all $w\in \Lambda.$
When this situation occurs, we say that $\phi$ is \textit{semiconjugate} to the shift map on $m$ symbols.
If the map $\Pi$ is also \textit{one-to-one} we say that $\phi$ is \textit{conjugate} to the shift map on $m$ symbols.
Clearly, in this latter case, the map $\phi$ restricted to $\Lambda$ inherits all the topological properties of the shift map.

Close to the shift map, a prototypical example of chaotic dynamics arises by the geometric
structure associated with the Smale horseshoe. From a historical background,
the Smale horseshoe ``robustly describes the homoclinic dynamics encountered by Poincar\'{e} and
studied by Birkhoff, Cartwright-Littlewood, and Levinson'' (quoting \cite{Sh-05}).
Technically, the Smale's construction involves a planar diffeomorphism, acting on a square,
whose image has the shape of a horseshoe that crosses the square in a suitable manner
(see \cite{Mo-73,Sm-65,Sm-67} for the mathematical details). The Smale horseshoe map presents a
\textit{hyperbolic compact invariant set} on which it is \textit{conjugate} to the shift map on two symbols.
In the sequel, any time we have a map with the same properties (conjugate to $(\Sigma_{m},\sigma)$ for $m\geq 2$),
we will say that
\textit{a Smale horseshoe occurs}.
This is, for instance, the case considered in the frame of Melnikov's theory
where a Smale horseshoe occurs for some iterates of the Poincar\'{e} map as a consequence of the
Smale-Birkhoff theorem. In fact, such theorem
considers a diffeomorphism $\phi$ possessing
a transversal homoclinic point $q$ to a hyperbolic saddle point $p$. Then, for some $N,$
$\phi$ has a hyperbolic invariant set $\Lambda$ on which the $N$-th iterate $\phi^N$
is conjugate to the shift map on two symbols (see \cite{Ho-90}).

Sometimes the detection of a Smale horseshoe may be a difficult task, and so,
it grew up the idea of the possibility to prove some weaker types of chaos,
which nonetheless, are still important in the applications. For this reason, various concepts of chaotic dynamics have been proposed
in different contexts (see \cite{BlCo-92,BuWe-95,KiSt-89}).
Several authors developed a wide range of different techniques of nonlinear analysis leading to the so-called ``topological horseshoes''
\cite{KeYo-01}. Since in this paper we are interested in the study of the periodic AP problem,
we consider a relevant point the achievement of the existence of periodic solutions (possibly subharmonic ones)
as fixed points of the Poincar\'{e} map or of its iterates.
Therefore, we say that \textit{a topological horseshoe occurs}
if there is a \textit{compact invariant set} on which a given map $\phi$ is \textit{semiconjugate} to the shift map on $m\geq 2$ symbols
and, moreover, for each periodic sequence
$S\in \Sigma_m$, there is at least one periodic point $w\in \Lambda$ with the same period and such that $\Pi(w) = S.$
It is important to observe that also in this case the topological entropy is positive.
}

%------------------------
%--- Paragraph 4
\bigskip
\paragraph{Plan of the paper}{
In view of the previous survey about the AP periodic problems, our aim is to focus the attention on a
simple second order nonlinear ODE in which the nonlinearity contains the principal features about
the crossing of the first eigenvalue. Namely, we consider
\begin{equation}\label{eq-0.8}
u''+f(u)=p(t),
\end{equation}
where $p(t)$ is a $T$-periodic forcing term and
the nonlinearity $f(u)$ is a positive function, with global minimum at $u=0$, which is decreasing on
the negative reals and increasing on the positive ones.
The counterpart of \eqref{eq-0.8} with a damping term
\begin{equation*}
u''+c u'+f(u)=p(t),
\end{equation*}
will be studied as well.
Since a dynamical system approach will be adopted,
we briefly present in Section~\ref{section-2} some results
about the autonomous equation $u''+f(u)=k$ where $k\in\mathbb{R}$. In particular, for $k > 0$
the associated planar phase-portrait is that of a local center enclosed by a homoclinic trajectory
of a hyperbolic saddle point. When \eqref{eq-0.8} may be treated as a small perturbation of the associated autonomous system,
such saddle-center geometry suggests to exploit a Melnikov type approach.
On the contrary, when the perturbation is not necessary small, we discuss two other different methods, one coming from the
Conley index theory and borrowed from \cite{GKMO-00,KoMiOk-96} and, the other one, based on a topological
argument called ``SAP''.
Therefore, in Section~\ref{section-3} we discuss the detection of chaos,
under different assumptions on the nonlinearity $f$ and the forcing term $p$
that will suggest in a natural way the choice of the abstract method that we are going to apply.
Furthermore, these different methods and their corresponding results have been divided mainly into two types
according to their capability to ensure the presence of ``Smale horseshoes'' or of ``topological horseshoes''.
Finally, in Appendix we provide the basic theory of the SAP method for completeness.
}

%------------------------------------------------------------------------------
%--- Section 2
\section{Description of the periodic problem}\label{section-2}

As observed in the Introduction, in order to develop a more complete understanding of the Ambrosetti-Prodi problem with periodic boundary conditions, the typical nonlinearities that one has to consider consist of sufficiently smooth strictly convex functions $f: {\mathbb R} \to {\mathbb R}$ such that
\begin{equation}\label{eq-2.0}
f'(-\infty) < 0 < f'(+\infty).
\end{equation}
This clearly is consistent with the requirement that the derivative of the nonlinearity crosses the first eigenvalue,
which is $\lambda_1 = 0$ in the case of the linear periodic eigenvalue problem associated with the differential operator
$u \mapsto - u'' - c u'$ (where $c \in {\mathbb R}$). Any strictly convex function satisfying \eqref{eq-2.0}
is such that $f(\pm\infty) = +\infty$ and it has a unique point of strict absolute minimum $s = s_{m}$.
Without loss of generality (i.e. possibly replacing $f(s)$ with $f(s + s_{m}) - f(s_{m})$),
we can suppose to work with a nonlinear function $f$ having a strict absolute minimum at $s=0$
and such that $f(0) = 0.$

Taking into account these preliminary observations, we are in position to introduce a list of the main assumptions characterizing the class of nonlinearities that we will consider.
These conditions, summarized below in $(H_{0})$, will be tacitly assumed throughout this paper
and they represent the minimum equipment of requirements which are common in all the different approaches we are going to discuss.
Further regularity conditions will also be introduced in the sequel when needed.

\medskip\noindent
\emph{$(H_{0})\;$
$f:\mathbb{R}\to\mathbb{R}$ is a locally Lipschitz continuous function with $f(0)= 0$
which is strictly decreasing on $]-\infty,0]$ and strictly increasing on $[0,+\infty[$ and such that $\displaystyle{\lim_{|s|\to+\infty}f(s)=+\infty}$.}

\medskip\noindent
We deal with the second order nonlinear equation
\begin{equation*}
u'' +f(u)=p(t),
\leqno{(\mathscr{E}_1)}
\end{equation*}
where the forcing term $p: {\mathbb R} \to {\mathbb R}$ is supposed to be a locally integrable $T$-periodic function.
In some cases it will be possible to extend the results for equation $(\mathscr{E}_1)$ to the equation
\begin{equation*}
u''+c u'+f(u)=p(t),
\leqno{(\mathscr{E}_2)}
\end{equation*}
where $c$ is a positive friction coefficient. If $c$ is assumed to be small,
equation $(\mathscr{E}_2)$ can be viewed as a perturbation of the conservative equation
$(\mathscr{E}_1)$.
For the investigation of both $(\mathscr{E}_1)$ and $(\mathscr{E}_2)$ we follow a dynamical system approach
by analyzing the local flow associated with the corresponding systems in the phase plane. In particular,
dealing with $(\mathscr{E}_1)$, we consider the planar Hamiltonian system
\begin{equation*}
\begin{cases}
x'=y,\\
y'=-f(x)+p(t).
\end{cases}
\leqno{(\mathscr{S}_1)}
\end{equation*}
As usual, by the local flow determined by $(\mathscr{S}_1)$ we mean the map $\Psi_{t_0}^{t}$ which associates to any initial point $z_0=(x_0,y_0)
\in {\mathbb R}^2$ the point $\zeta(t),$ where $\zeta(\cdot)$ is the solution of $(\mathscr{S}_1)$ satisfying the initial condition
$\zeta(0) = z_0$ and defined on its maximal interval of existence. In the sequel, when not otherwise specified, we will
take $t_0 = 0$ and we consider the Poincar\'{e} operator $\Psi:= \Psi_{0}^{T}$. The fundamental theory of ODEs guarantees that
$\Psi$ is a homeomorphism defined on an open set $\text{dom}\Psi\subseteq {\mathbb R}^2$.
Similar considerations can be done for system
\begin{equation*}
\begin{cases}
x'=y,\\
y'=- cy -f(x)+p(t).
\end{cases}
\leqno{(\mathscr{S}_2)}
\end{equation*}
which is equivalent to $(\mathscr{E}_2)$.

The study of system $(\mathscr{S}_1)$ should become easier after a preliminary qualitative analysis of the autonomous system with a constant forcing term.
Roughly speaking, this corresponds to the case in which the time variable is ``freezed'' and it will be the object of the next subsection.

%------------------------
%--- Subsection 2.1
\subsection{Phase plane analysis}
Let us introduce a model problem by means of the autonomous ODE
\begin{equation}\label{eq-2.1}
u''+f(u)=k,
\end{equation}
with $k$ a real parameter. The phase plane analysis and geometric considerations give us information about the qualitative behavior of the solutions of \eqref{eq-2.1} and in turn of $(\mathscr{E}_1)$.

With this respect, equation \eqref{eq-2.1} can be written equivalently as a planar system in the phase plane $(x,y)$:
\begin{equation}\label{eq-2.2}
\begin{cases}
x'=y,\\
y'=-f(x)+k.
\end{cases}
\end{equation}

First of all let us find the equilibria of \eqref{eq-2.2}, by solving the equation $f(x)=k$. In view of $\min_{\mathbb R} f= f(0) = 0,$
we consider from now on only the case $k \geq 0.$

If $k=0$, the origin is an unstable equilibrium of the system. In particular, it is the coalescence of a saddle point with a center.
It seems interesting to observe that in literature such a geometry appears in the so called Bogdanov-Takens
bifurcation (see \cite{GuHo-83}). On the other hand, if $k>0$, the properties of the function $f$ lead to the existence of exactly two equilibria. Under the assumption $(H_{0})$ made on $f$, we can define two homeomorphisms
\[\begin{split}
&f_{l}:=\left.f\right|_{]-\infty,0]}:]-\infty,0]\to[0,+\infty[,\\
&f_{r}:=\left.f\right|_{[0,+\infty[}:[0,+\infty[\to[0,+\infty[,
\end{split}\]
such that $f_{l}$ is strictly decreasing and $f_{r}$ is strictly increasing. Therefore, the inverse functions of both $f_{l}$ and $f_{r}$ are well defined and we denote them
by $f^{-1}_{l}$ and $f^{-1}_{r}$, respectively.
By setting
\[\begin{split}
&x_{u}=x_{u}(k):=f^{-1}_{l}(k),\\
&x_{s}=x_{s}(k):=f^{-1}_{r}(k),
\end{split}\]
we have $x_{u}<x_{s}$. The equilibria are the points $(x_{u},0)$
and $(x_{s},0)$ where the first one has got the topological
structure of an unstable saddle and the second one is a stable
center.

The system \eqref{eq-2.2} is a hamiltonian system with total energy given by
\begin{equation}\label{eq-2.4}
E_{k}(x,y):=\frac{1}{2}y^{2}+F(x)-kx,
\end{equation}
where $F$ is defined by
\[F(x):=\int_{0}^{x}f(s)ds.
\]
Notice that $F(\pm\infty) =\pm\infty.$

To describe the associated phase portrait, for each $\rho\in\mathbb{R}$, we define the energy level lines of \eqref{eq-2.2} as follows
\[
\mathcal{L}_{\rho}:=\{(x,y)\in\mathbb{R}^{2}:E_{k}(x,y)=\rho\}.
\]
In order to study the geometry of each $\mathcal{L}_{\rho}$ it is useful to introduce the auxiliary function
\begin{equation}\label{phi}
\Phi_{k}(x):=F(x)-kx.
\end{equation}
Observe that, for each $k > 0,$ the graph of the function $\Phi_{k}$ is that of a $N$-shaped curve passing through the origin with negative slope.

\begin{proposition}\label{prop-2.1}
Let $\Phi_{k}$ be defined as in \eqref{phi} for $k=0$. Then, $\Phi_{0}(x)=\rho$ has a unique solution for every $\rho\in\mathbb{R}$. In particular, the following hold.
\begin{itemize}
\item If $\rho=0$ the solution is $x=0$.
\item If $\rho<0$ we denote it by $x_{*}(\rho)$ and it is such that $x_{*}(\rho)<0$.
\item If $\rho>0$ we denote it by $x^{*}(\rho)$ and it is such that $x^{*}(\rho)>0$.
\end{itemize}
\end{proposition}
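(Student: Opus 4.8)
The plan is to prove that, for $k = 0$, the auxiliary function reduces to $\Phi_0 = F$ and that $F$ is a strictly increasing continuous bijection of $\mathbb{R}$ onto itself; all three listed cases will then follow immediately from $F(0) = 0$ together with monotonicity. First I would record the elementary sign information on $f$ coming from $(H_0)$: since $f(0) = 0$ and $f$ is strictly decreasing on $]-\infty,0]$ and strictly increasing on $[0,+\infty[$, one gets $f(s) > 0$ for every $s \neq 0$, while $f(0) = 0$. In other words, $f$ is nonnegative and vanishes only at the origin.

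The key step is the strict monotonicity of $F$. Since $F'(x) = f(x)$ vanishes at $x = 0$, a naive ``positive derivative'' argument does not apply directly; instead I would argue through the integral itself. For any $a < b$ one has $F(b) - F(a) = \int_a^b f(s)\,ds > 0$, because the integrand is nonnegative on $[a,b]$ and strictly positive except at the single point $0$. Hence $F$ is strictly increasing on all of $\mathbb{R}$, and in particular injective, so $\Phi_0(x) = \rho$ admits \emph{at most} one solution.

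Existence of a solution for every $\rho$ then follows from surjectivity: $F$ is continuous and, as already noted in the text, $F(\pm\infty) = \pm\infty$, so by the intermediate value theorem $F$ maps $\mathbb{R}$ onto $\mathbb{R}$. Combined with injectivity, this makes $F$ a homeomorphism of $\mathbb{R}$, which yields the unique solution claimed. Finally, the sign of that solution is read off from $F(0) = 0$ and strict monotonicity: $F(x) = 0$ forces $x = 0$; if $\rho < 0 = F(0)$ then necessarily $x < 0$, and if $\rho > 0 = F(0)$ then $x > 0$. These are exactly the three assertions $x = 0$, $x_*(\rho) < 0$ and $x^*(\rho) > 0$.

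I expect no genuine difficulty here. The only point requiring any care is justifying strict monotonicity at the degenerate critical point $x = 0$, where $F' = 0$; the integral comparison above handles this cleanly, and everything else is a routine application of continuity, the intermediate value theorem, and the normalization $F(0) = 0$.
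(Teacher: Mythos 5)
Your proof is correct and follows essentially the same route as the paper: the paper's (very terse) argument likewise rests on observing that $\Phi_{0}=F$ is strictly increasing on $\mathbb{R}$ with $\Phi_{0}(0)=0$, together with the previously noted fact that $F(\pm\infty)=\pm\infty$. You have merely filled in the details the paper leaves implicit, in particular the integral argument for strict monotonicity at the degenerate critical point $x=0$, which is a worthwhile clarification but not a different method.
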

\begin{proof}
From the conditions in $(H_{0})$ made on $f$ follows that $\Phi_{0}$ is strictly increasing on $\mathbb{R}$ and also $\Phi_{0}(0)=0$. Hence, thanks to the monotonicity of $\Phi_{0}$, the conclusions follow straightaway.
\end{proof}

\begin{proposition}\label{prop-2.2}
Let $k$ be a fixed positive real number and $\Phi_{k}$ defined as in \eqref{phi}. Then, the following hold.
\begin{itemize}
\item If $\rho=\Phi_{k}(x_{u})$, then $\Phi_{k}(x)=\rho$ has two solutions. One is $x_{u}$ and the other one, denoted by $x_{h}=x_{h}(k)$, is such that $x_{s}< x_{h}$.
\item If $\rho=\Phi_{k}(x_{s})$, then $\Phi_{k}(x)=\rho$ has two solutions. One is $x_{s}$  and the other one, denoted by $x_{*}(\rho)$, is such that $x_{*}(\rho)<x_{s}$.
\item If $\rho>\Phi_{k}(x_{u})$, then $\Phi_{k}(x)=\rho$ has a unique solution, denoted by $x^{*}(\rho)$, and it is such that $x^{*}(\rho)>x_{h}$.
\item If $\Phi_{k}(x_{s})<\rho<\Phi_{k}(x_{u})$, then $\Phi_{k}(x)=\rho$ has three solutions. These solutions, denoted by $x_{*}(\rho)$, $x_{-}(\rho)$ and $x_{+}(\rho)$, are such that $x_{*}(\rho)<x_{u}<x_{-}(\rho)<x_{s}<x_{+}(\rho).$
\item If $\rho<\Phi_{k}(x_{s})$, then $\Phi_{k}(x)=\rho$ has a unique solution, denoted by $x_{*}(\rho)$, and it is such that $x_{*}(\rho)<x_{u}$.
\end{itemize}
\end{proposition}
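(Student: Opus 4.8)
The plan is to reduce the entire statement to the elementary analysis of the graph of the one-variable function $\Phi_k$, whose shape is completely controlled by its derivative $\Phi_k'(x) = f(x) - k$. First I would record the monotonicity. Since $f$ is strictly decreasing on $]-\infty,0]$ with $f(x_u)=k$ and strictly increasing on $[0,+\infty[$ with $f(x_s)=k$, the sign of $\Phi_k' = f-k$ is positive on $]-\infty,x_u[$, negative on $]x_u,x_s[$, and positive on $]x_s,+\infty[$. Hence $\Phi_k$ is strictly increasing on $]-\infty,x_u]$, strictly decreasing on $[x_u,x_s]$, and strictly increasing on $[x_s,+\infty[$, which is exactly the announced $N$-shape, with a strict local maximum at $x_u$ and a strict local minimum at $x_s$. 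Because $\Phi_k$ is strictly decreasing on $[x_u,x_s]$ and $x_u<x_s$, I immediately obtain the crucial inequality $\Phi_k(x_s) < \Phi_k(x_u)$ between the two critical values.

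Next I would pin down the behaviour at infinity, which is where the coercivity assumption in $(H_0)$ enters. From $\lim_{|s|\to+\infty} f(s) = +\infty$ one has $\Phi_k'(x) = f(x) - k \to +\infty$ as $x\to\pm\infty$; combined with the monotonicity of the outer branches this forces $\Phi_k(x) \to +\infty$ as $x \to +\infty$ and $\Phi_k(x)\to -\infty$ as $x\to -\infty$. Concretely, for $|x|$ large one bounds $f$ below by $2k$ and integrates to see that the linear term $-kx$ is dominated. Consequently each of the three monotone branches is a continuous strictly monotone bijection onto its range: $]-\infty,x_u]$ maps onto $]-\infty,\Phi_k(x_u)]$, the middle branch $[x_u,x_s]$ maps onto $[\Phi_k(x_s),\Phi_k(x_u)]$, and $[x_s,+\infty[$ maps onto $[\Phi_k(x_s),+\infty[$.

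With this picture in hand, the five cases of the proposition become a bookkeeping exercise: for a fixed level $\rho$ I would count, branch by branch, how many times $\rho$ lies in the range of that branch, invoking strict monotonicity for uniqueness of the root on each branch and the intermediate value theorem for its existence. For $\rho > \Phi_k(x_u)$ only the right branch contributes, giving a single root $x^*(\rho)$; since $\Phi_k(x_h)=\Phi_k(x_u)<\rho$ and the right branch is increasing, $x^*(\rho)>x_h$. For $\rho=\Phi_k(x_u)$ the root on the left branch is $x_u$ itself and the right branch yields the companion root $x_h>x_s$; for $\rho=\Phi_k(x_s)$ the minimum is attained at $x_s$ while the left branch produces $x_*(\rho)<x_u<x_s$. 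When $\Phi_k(x_s)<\rho<\Phi_k(x_u)$ all three branches are hit, producing $x_*(\rho)<x_u<x_-(\rho)<x_s<x_+(\rho)$, and when $\rho<\Phi_k(x_s)$ only the left branch is hit, giving the unique $x_*(\rho)<x_u$. The ordering statements in each case are read off directly from the fact that each root sits on a prescribed branch.

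I do not expect any genuine difficulty here; the content is entirely contained in establishing the $N$-shape and the two limits at infinity. The only points requiring care are the correct use of the coercivity hypothesis to secure $\Phi_k(\pm\infty)$ (without which the outer branches might fail to cover the required half-lines and some of the \emph{unique solution} claims could break down) and the consistent labelling of the roots so that the stated inequalities line up with the branch on which each root lives.
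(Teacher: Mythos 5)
Your proof is correct and follows essentially the same route as the paper's: compute $\Phi_k'=f-k$ to establish the $N$-shaped monotonicity pattern with local maximum at $x_u$ and local minimum at $x_s$, deduce $\Phi_k(\pm\infty)=\pm\infty$ from the coercivity of $f$, and then read off the root count on each monotone branch via the intermediate value theorem. The only cosmetic difference is that the paper obtains the ordering of critical values in the sharper form $\Phi_k(x_u)>0>\Phi_k(x_s)$ using $\Phi_k(0)=0$, whereas you derive the (sufficient) inequality $\Phi_k(x_s)<\Phi_k(x_u)$ directly from the strict decrease on $[x_u,x_s]$.
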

\begin{proof}
Assumption $(H_{0})$ leads to $\Phi_{k}(0)=0$. By definition of $\Phi_{k}$, its derivative is $\Phi'_{k}(x)=f(x)-k$. Thus, $\lim_{x\to\pm\infty}\Phi'_{k}(x)=+\infty$ because of the condition $\lim_{|x|\to\infty}f(x)=+\infty$. In this way, we have $\Phi_{k}(\pm\infty)=\pm\infty$.
Moreover, $\Phi_{k}$ has exactly two critical points which are the abscissa of the equilibria of system \eqref{eq-2.2}. From the properties $(H_{0})$
assumed on $f$, we deduce that $x_{u}$ is a local maximum and $x_{s}$ is a local minimum. Therefore, it follows that $\Phi_{k}$ is strictly decreasing on $[x_{u},x_{s}]$ and strictly increasing on $]-\infty,x_{u}]$ and $[x_{s},+\infty[$.
Since $0\in]x_{u},x_{s}[$, we have $\Phi_{k}(x_{u})>0>\Phi_{k}(x_{s})$.

So that, if $\rho=\Phi_{k}(x_{u})$, then there exists unique $x_{h}\in]x_{s},+\infty[$ such that $\Phi_{k}(x_{h})=\Phi_{k}(x_{u})$. Analogously, if $\rho=\Phi_{k}(x_{s})$ then there exists unique $x_{*}(\rho)\in]-\infty,x_{u}[$ such that $\Phi_{k}(x_{*}(\rho))=\Phi_{k}(x_{s}).$
Instead, for every $\rho\in]\Phi_{k}(x_{s}),\Phi_{k}(x_{u})[$, there exist $x_{*}(\rho)\in]-\infty,x_{u}[$, $x_{-}(\rho)\in]x_{u},x_{s}[$ and $x_{+}(\rho)\in]x_{s},x_{h}[$ which are zeros of the equation $\Phi_{k}(x)=\rho.$ At last, if $\rho\in]\Phi_{k}(x_{u}),+\infty[$, or $\rho\in]-\infty,\Phi_{k}(x_{s})[$, the equation $\Phi_{k}(x)=\rho$ has exactly one solution $x^{*}(\rho)\in]x_{h},+\infty[$, respectively $x_{*}(\rho)\in]-\infty,x_{u}[$.
\end{proof}

\noindent
An application of Proposition~\ref{prop-2.1} along with Proposition~\ref{prop-2.2} reveals the geometry of the phase portrait associated with system \eqref{eq-2.2} for any given $k\geq 0$.
Examples of phase portraits which mimic the behavior of the solutions of \eqref{eq-2.1} are shown in Fig.~\ref{fig-1}.

\begin{figure}[htb]
          \centering
          \includegraphics[width=1\textwidth]{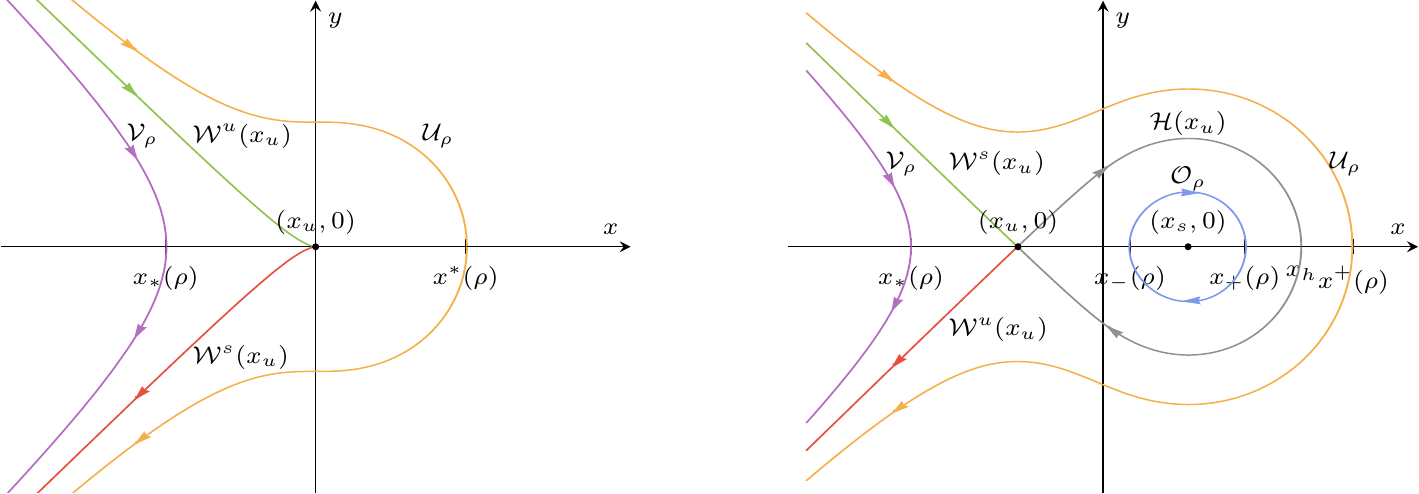}
          \caption{Phase portraits of the autonomous system \eqref{eq-2.2} where the nonlinearity is given by $f(u)=\sqrt{1+u^{2}}-1$. Right: $k=0.$ Left: $k>0$. In both cases the geometry of the different energy level lines is pointed out and the arrows show the direction of the flow along the trajectories.}
\label{fig-1}
\end{figure}

\noindent
Moreover, for all $\rho\in\mathbb{R}$, we can characterize the energy level lines $\mathcal{L}_{\rho}$ according to their type with respect to the level $\rho$. Since, the different kinds of energy level lines for case $k>0$ include the ones for $k=0$, here we give just a detailed discussion about positive reals $k.$

For $\rho=\Phi_{k}(x_{u})$, the saddle like structure is characterized by the union of the unstable equilibrium point with the unstable manifold $W^{u}(x_{u})$, the stable manifold $W^{s}(x_{u})$ and the homoclinic orbit $\mathcal{H}(x_{u})$. In this way, we have
\[\mathcal{L}_{\Phi_{k}(x_{u})}=\{(x_{u},0)\}\cup W^{u}(x_{u})\cup W^{s}(x_{u})\cup \mathcal{H}(x_{u}),
\]
where
\[\begin{split}
&W^{u}(x_{u}):=\{(x,y)\in\mathbb{R}^{2}:x<x_{u},\, y<0,\, E_{k}(x,y)=\Phi_{k}(x_{u})\},\\
&W^{s}(x_{u}):=\{(x,y)\in\mathbb{R}^{2}:x<x_{u}, \, y>0,\, E_{k}(x,y)=\Phi_{k}(x_{u})\},\\
&\mathcal{H}(x_{u}):=\{(x,y)\in\mathbb{R}^{2}:x>x_{u},\, E_{k}(x,y)=\Phi_{k}(x_{u})\}.
\end{split}\]

For $\Phi_{k}(x_{s})<\rho<\Phi_{k}(x_{u})$, the energy level line splits as follows
\[\mathcal{L}_{\rho\in]\Phi_{k}(x_{s}),\Phi_{k}(x_{u})[}=\mathcal{O}_{\rho}\cup \mathcal{V}_{\rho},
\]
where
\begin{equation}\label{eq-2.5o}
\mathcal{O}_{\rho}:=\{(x,y)\in\mathbb{R}^{2}:x>x_{u},\, E_{k}(x,y)=\rho\}
\end{equation}
is a closed symmetric curve surrounding the center which intersects the $x-$axis at the points $(x_{-}(\rho),0)$ and $(x_{+}(\rho),0)$ and it is run in the clockwise sense, on the contrary,
\begin{equation}\label{eq-2.5v}
\mathcal{V}_{\rho}:=\{(x,y)\in\mathbb{R}^{2}:x<x_{u},\, E_{k}(x,y)=\rho\}
\end{equation}
is an unbounded symmetric curve which intersects the $x-$axis at the point $(x_{*}(\rho),0)$.

If $\rho=\Phi_{k}(x_{s})$, then
\[\mathcal{L}_{\Phi_{k}(x_{s})}=\{(x_{s},0)\}\cup \mathcal{V}_{\Phi_{k}(x_{s})},\]
where $\{(x_{s},0)\}$ is the stable equilibrium point and $\mathcal{V}_{\Phi_{k}(x_{s})}$ is defined according to \eqref{eq-2.5v}.

For every $\rho<\Phi_{k}(x_{s})$, $\mathcal{L}_{\rho}$ is a curve identified by \eqref{eq-2.5v} and so, also in this case, we denote each energy level line with $\mathcal{V}_{\rho}$.

For every $\rho>\Phi_{k}(x_{u})$, $\mathcal{L}_{\rho}$ is an unbounded symmetric curve over the saddle like structure which intersects the $x-$axis at the point $(x^{*}(\rho),0)$ and it is run in the clockwise sense. In this case, the energy level line is
\begin{equation}\label{eq-2.5u}
\mathcal{U}_{\rho}:=\mathcal{L}_{\rho\in]\Phi_{k}(x_{u}),+\infty[}=\{(x,y)\in\mathbb{R}^{2}:E_{k}(x,y)=\rho\}.
\end{equation}

We conclude the phase plane analysis performing a study, depending on $k$, of the intersection points between the saddle like structure with the $x-$axis.
\begin{proposition}\label{prop-2.3}
Let $k_{1},k_{2}\in\mathbb{R}$ such that $0\leq k_{1}<k_{2}$ and $\Phi_{k_{1}}, \Phi_{k_{2}}$ defined as in \eqref{phi}, then there exist unique $x_{h}(k_{i})$ for $i\in\{1,2\}$ such that $\Phi_{k_{i}}(x_{u}(k_{i}))=\Phi_{k_{i}}(x_{h}(k_{i}))$ and $x_{u}(k_{2})<x_{u}(k_{1})<x_{h}(k_{1})<x_{h}(k_{2}).$
\end{proposition}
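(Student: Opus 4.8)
The plan is to treat separately the three inequalities in the chain $x_u(k_2)<x_u(k_1)<x_h(k_1)<x_h(k_2)$, reserving the bulk of the work for the last one. The existence and uniqueness of $x_h(k_i)$ for $k_i>0$ is not something I would reprove: it is exactly the first bullet of Proposition~\ref{prop-2.2}, which furnishes a unique $x_h(k_i)\in\,]x_s(k_i),+\infty[$ with $\Phi_{k_i}(x_h(k_i))=\Phi_{k_i}(x_u(k_i))$. The middle inequality $x_u(k_1)<x_h(k_1)$ is then immediate, since Proposition~\ref{prop-2.2} places $x_u(k_1)<x_s(k_1)<x_h(k_1)$. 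For the leftmost inequality I would only invoke that $x_u(k)=f^{-1}_{l}(k)$ and that $f_{l}$ is a strictly decreasing homeomorphism, whence $f^{-1}_{l}$ is strictly decreasing and $k_1<k_2$ gives $x_u(k_1)>x_u(k_2)$.

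The heart of the proof is $x_h(k_1)<x_h(k_2)$, and here I would avoid any attempt to differentiate $x_h(k)$ in $k$ (the map $f$ is only locally Lipschitz, so the implicit function theorem is not cleanly available) in favour of a direct comparison of energies. The starting observation is the equal-area reading of the defining relation: $\Phi_{k_1}(x_h(k_1))=\Phi_{k_1}(x_u(k_1))$ is equivalent, after integrating $\Phi'_{k_1}=f-k_1$, to $\int_{x_u(k_1)}^{x_h(k_1)}\bigl(f(x)-k_1\bigr)\,dx=0$. Evaluating instead the function $\Phi_{k_2}$ between the same two abscissae and using this identity gives
\[
\Phi_{k_2}(x_h(k_1))-\Phi_{k_2}(x_u(k_1))=\int_{x_u(k_1)}^{x_h(k_1)}\bigl(f(x)-k_2\bigr)\,dx=-\,(k_2-k_1)\bigl(x_h(k_1)-x_u(k_1)\bigr)<0,
\]
because $x_h(k_1)>x_u(k_1)$ and $k_2>k_1$. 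Hence $\Phi_{k_2}(x_h(k_1))<\Phi_{k_2}(x_u(k_1))$.

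It then remains to compare this with the level $\Phi_{k_2}(x_u(k_2))$. Since $x_u(k_2)<x_u(k_1)<0<x_s(k_2)$ and $\Phi_{k_2}$ is strictly decreasing on $[x_u(k_2),x_s(k_2)]$ (as established in the proof of Proposition~\ref{prop-2.2}), one has $\Phi_{k_2}(x_u(k_1))<\Phi_{k_2}(x_u(k_2))$, so that $\Phi_{k_2}(x_h(k_1))<\Phi_{k_2}(x_u(k_2))=\Phi_{k_2}(x_h(k_2))$. Finally I would close the argument with the geometry of $\Phi_{k_2}$ recorded in Proposition~\ref{prop-2.2}: to the right of $x_u(k_2)$ the sublevel set $\{x>x_u(k_2):\Phi_{k_2}(x)<\Phi_{k_2}(x_u(k_2))\}$ is exactly the open interval $]x_u(k_2),x_h(k_2)[$. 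As $x_h(k_1)>0>x_u(k_2)$, the strict inequality $\Phi_{k_2}(x_h(k_1))<\Phi_{k_2}(x_u(k_2))$ forces $x_h(k_1)\in\,]x_u(k_2),x_h(k_2)[$, i.e.\ $x_h(k_1)<x_h(k_2)$, as desired.

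The main obstacle I anticipate is precisely this last step: the implicit, non-smooth dependence of $x_h$ on $k$ means monotonicity cannot be read off from a derivative, so the whole weight falls on choosing the right comparison (evaluating $\Phi_{k_2}$ at the $k_1$-data) and then exploiting the monotonicity branches of $\Phi_{k_2}$ rather than any regularity in $k$. One caveat worth flagging is the degenerate endpoint $k_1=0$: there $f^{-1}_{l}(0)=f^{-1}_{r}(0)=0$, so $x_u(0)=x_h(0)=0$ and the middle inequality collapses to an equality; the argument above applies verbatim for $k_1>0$, and the $k_1=0$ case is recovered as the stated chain with that single equality, or by a direct limiting remark.
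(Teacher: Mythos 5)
Your proposal is correct, and it reaches the key inequality $x_h(k_1)<x_h(k_2)$ by a mechanism that differs from the paper's. The paper never forms your equal-area identity $\int_{x_u(k_1)}^{x_h(k_1)}\bigl(f(x)-k_1\bigr)\,dx=0$; it works instead with the pointwise relation $\Phi_{k_1}(x)-\Phi_{k_2}(x)=(k_2-k_1)x$, whose sign depends on the sign of $x$: from $x_u(k_1)<0$ it gets $\Phi_{k_1}(x_u(k_1))<\Phi_{k_2}(x_u(k_1))<\Phi_{k_2}(x_u(k_2))$, substitutes the defining equalities to reach $\Phi_{k_1}(x_h(k_1))<\Phi_{k_2}(x_h(k_2))$, uses $x_h(k_2)>0$ to bound the right-hand side by $\Phi_{k_1}(x_h(k_2))$, and concludes from the strict monotonicity of $\Phi_{k_1}$ on $[x_s(k_1),+\infty[$ (incidentally, the paper's displayed inequality \eqref{eq-prop-3} carries a typo: for its chain to close it must read $\Phi_{k_2}(x_h(k_2))<\Phi_{k_1}(x_h(k_2))$, which is what ``$x_h(k_2)>0$'' actually yields). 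You instead keep every comparison inside the single function $\Phi_{k_2}$: the integral identity gives the explicit drop $\Phi_{k_2}(x_h(k_1))-\Phi_{k_2}(x_u(k_1))=-(k_2-k_1)\bigl(x_h(k_1)-x_u(k_1)\bigr)<0$, where only the sign of the interval length matters rather than the signs of the endpoints, and you finish with the sublevel-set structure of $\Phi_{k_2}$ to the right of $x_u(k_2)$. What your route buys: it shows directly that the $k_1$-homoclinic interval sits strictly below the $k_2$-saddle level, which is precisely the geometric reading ($\mathcal{H}(x_u(k_1))$ enclosed by $\mathcal{H}(x_u(k_2))$) that the paper later extracts from this proposition in the proof of Theorem~\ref{th-3.5}, together with a quantitative energy gap; what the paper's route buys is freedom from any integration, using only evaluations of $\Phi_k$ and monotonicity. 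Your closing caveat about $k_1=0$ is also well taken: there $x_u(0)=x_s(0)=x_h(0)=0$, so the middle inequality degenerates to an equality, a restriction the paper's proof implicitly shares (it invokes Proposition~\ref{prop-2.2} for both indices and calls both $x_h(k_i)$ positive) without flagging it.
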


\begin{proof}
From the growth conditions of $f$ in $(H_{0})$ it follows that
\[x_{u}(k_{2})<x_{u}(k_{1})<x_{s}(k_{1})<x_{s}(k_{2}).\]
By the definition of $\Phi_{k}$, we deduce that
\begin{alignat}{2}
&\Phi_{k_{1}}(x)<\Phi_{k_{2}}(x), \quad \forall\, x<0, \label{eq-prop-0} \\
&\Phi_{k_{1}}(x)>\Phi_{k_{2}}(x), \quad  \forall\, x>0. \label{eq-prop-0bis}
\end{alignat}

Since $x_{u}(k_{2})<x_{u}(k_{1})<0$, the condition in \eqref{eq-prop-0} and the fact that $\Phi_{k_{2}}$ is strictly decreasing on $]x_{u}(k_{2}),0]$, imply
\begin{equation}\label{eq-prop-1}
\Phi_{k_{1}}(x_{u}(k_{1}))<\Phi_{k_{2}}(x_{u}(k_{1}))<\Phi_{k_{2}}(x_{u}(k_{2})).
\end{equation}
Thanks to Proposition~\ref{prop-2.2} there exist exactly two positive real numbers $x_{h}(k_{1})$, $x_{h}(k_{2})$ such that $x_{s}(k_{1})<x_{h}(k_{1})$, $x_{s}(k_{2})<x_{h}(k_{2})$ and
\[\Phi_{k_{i}}(x_{u}(k_{i}))=\Phi_{k_{i}}(x_{h}(k_{i})),\quad \text{for } i=1,2.
\]
Using these equalities in \eqref{eq-prop-1} we can get
\begin{equation}\label{eq-prop-2}
\Phi_{k_{1}}(x_{h}(k_{1}))<\Phi_{k_{2}}(x_{h}(k_{2})).
\end{equation}
Whereas $x_{h}(k_{2})>0$, then from the condition in \eqref{eq-prop-0bis} follows
\begin{equation}\label{eq-prop-3}
\Phi_{k_{2}}(x_{h}(k_{1}))<\Phi_{k_{1}}(x_{h}(k_{1})).
\end{equation}
Combining \eqref{eq-prop-2} and \eqref{eq-prop-3}, we obtain $\Phi_{k_{1}}(x_{h}(k_{1}))<\Phi_{k_{1}}(x_{h}(k_{2})).$
Since $\Phi_{k_{1}}$ is strictly increasing on $[x_{s}(k_{1}),+\infty[$, we conclude that
\[x_{h}(k_{1})<x_{h}(k_{2}),\]
because of $x_{s}(k_{1})<x_{h}(k_{2})$.
\end{proof}

%------------------------
%--- Subsection 2.2
\subsection{Time mapping formulas}
Let us introduce some notation that will be used throughout the paper.
Considering \eqref{eq-2.4} and \eqref{phi}, the time needed to a solution to move in the phase plane $(x,y)$
along an orbit path identified by the energy level $\rho$,
from a point $(x_{1},y_{1})$ to a point $(x_{2},y_{2})$, is given by
\begin{equation}\label{eq-2.6}
\tau(\rho;\, x_{1},x_{2}):=\int_{x_{1}}^{x_{2}}\frac{1}{\sqrt{2(\rho-\Phi_{k}(s))}} \,ds.
\end{equation}
The function $\rho\mapsto \tau(\rho;\, x_{1},x_{2})$ is called time-map associated with the autonomous equation \eqref{eq-2.1}.

The phase plane analysis has highlighted the presence of a saddle like structure and also mainly two types of orbits.
More in detail, there are the periodic orbits, $\mathcal{O}_{\rho}$, and
the non-periodic ones, $\mathcal{V}_{\rho}$ and $\mathcal{U}_{\rho}$.
With this in mind, we can characterize the time-map formulas in three different kinds.

In the case of the periodic orbits, by $\eqref{eq-2.6}$ we can evaluate the time elapsed to move along the orbit $\mathcal{O}_{\rho}$ which is defined as in \eqref{eq-2.5o}.
In particular, we set the time needed to travel from $(x_{-}(\rho),0)$ to a point $(r,0)$ on $\mathcal{O}_{\rho}$, with $x_{-}(\rho)<r\leq x_{+}(\rho)$, as follows
\begin{equation}\label{tau-o}
\tau_{\scriptscriptstyle{\mathcal{O}}}(\rho;\, r):=\tau(\rho;\, x_{-}(\rho),r)=\int_{x_{-}(\rho)}^{r}\frac{1}{\sqrt{2(\rho-\Phi_{k}(s))}} \,ds.
\end{equation}
In this way, since $\mathcal{O}_{\rho}$ is a closed symmetric curve, its fundamental period is $2\, \tau_{\scriptscriptstyle{\mathcal{O}}}(\rho;\, x_{+}(\rho))$.

With respect to the non-periodic orbits, firstly we consider the unbounded curve $\mathcal{V}_{\rho}$ defined as in \eqref{eq-2.5v}.
To evaluate the travel time on $\mathcal{V}_{\rho}$, let us fix a value $r$ with $r<x_{*}(\rho)<x_{u}$.
Then, we define two points that belongs to $\mathcal{V}_{\rho}$: one is $P^{+}_{\rho}(r):=\big(r,\sqrt{2(\rho-\Phi_{k}(r))}\big)$, in the upper half plane,
and the other symmetric one is $P^{-}_{\rho}(r):=\big(r,-\sqrt{2(\rho-\Phi_{k}(r))}\big)$, in the lower half plane.
Therefore, the time needed to move along $\mathcal{V}_{\rho}$ from $P^{+}_{\rho}(r)$ to $(x_{*}(\rho),0)$ is
\[\tau(\rho;\, r,x_{*}(\rho))=\int_{r}^{x_{*}(\rho)}\frac{1}{\sqrt{2(\rho-\Phi_{k}(s))}} \,ds,
\]
which is equal to the time needed to travel from $(x_{*}(\rho),0)$ to $P^{-}_{\rho}(r).$
It follows that the time elapsed to go from $P^{+}_{\rho}(r)$ to $P^{-}_{\rho}(r)$ on $\mathcal{V}_{\rho}$ is
\begin{equation}\label{tau-v}
\tau_{\scriptscriptstyle{\mathcal{V}}}(\rho;\, r):=2\int_{r}^{x_{*}(\rho)}\frac{1}{\sqrt{2(\rho-\Phi_{k}(s))}} \,ds.
\end{equation}

In a similar way we face the time-map associated with the orbit $\mathcal{U}_{\rho}$ defined as in \eqref{eq-2.5u}.
In this case, we fix a value $r<x^{*}(\rho)$ and so, as before, the time needed to go from $P^{+}_{\rho}(r)$ to $P^{-}_{\rho}(r)$
along $\mathcal{U}_{\rho}$ is given by
\begin{equation}\label{tau-u}
\tau_{\scriptscriptstyle{\mathcal{U}}}(\rho;\, r):=2\int_{r}^{x^{*}(\rho)}\frac{1}{\sqrt{2(\rho-\Phi_{k}(s))}} \,ds.
\end{equation}

%-------------------------------------------------------------------------------------------
%--- Section 3
\section{Chaotic solutions for the AP periodic problem}\label{section-3}
After these preliminary considerations on the associated planar system with constant coefficients,
we proceed to present some possible approaches showing how the periodic AP problem may have a great amount of periodic solutions as well as chaotic dynamics.
In this section, we firstly give direct applications of some results already available in the literature
and then we provide a new result that involves a method derived from the theory of topological horseshoes.

%------------------------
%--- Subsection 3.1
\subsection{Melnikov type approach}

We collect here some different tools that are all based on a peculiar structure of autonomous Hamiltonian system, namely the existence of a
hyperbolic fixed point connected to itself by a homoclinic orbit.
This feature is owned by system \eqref{eq-2.2} for $k > 0$ provided that $f$ is sufficiently smooth with $f'(x_u) < 0.$
In order to have such a condition satisfied for every possible choice of $k > 0$,
we assume, along this subsection, a more restrictive condition than $(H_0)$ that is the following one.

\medskip\noindent
\emph{$(H_{1})\;$
$f:\mathbb{R}\to\mathbb{R}$ is a strictly convex function of class $C^r$, for $r\geq 2$,
with $f(0)= 0$, $f(s)>0$ for all $s\not=0$ and $\displaystyle{\lim_{|s|\to+\infty}f(s)=+\infty}$.}
\medskip\noindent

The phase plane analysis shows the presence of an equilibrium point $A = A_k:=(x_s,0)$,
which is a center, and a hyperbolic saddle equilibrium point $B= B_k:= (x_u,0)$
with a homoclinic orbit $\mathcal{H}=\mathcal{H}(x_u)$ enclosing $A.$
As mentioned above, this is the
classical scheme considered in the Melnikov's theory, where system $(\mathscr{S}_1)$
can be viewed as a perturbation of the autonomous
system \eqref{eq-2.2}. The Melnikov's method is a powerful tool
and ``one of the few analytical methods available for the
detection and the study of chaotic motions'' (quoting \cite[p.~186]{GuHo-83}).
In the special case of $(\mathscr{S}_1)$ it can be
applied by splitting the forcing term $p(t)$ as
\begin{equation}\label{eq-3.1}
p(t) = k + \varepsilon p_0(t), \quad k > 0.
\end{equation}
Without loss of generality, we can also suppose that $p_0(t)$ changes its sign. In particular,
by transferring the mean value of $p_0$ to the constant $k$, we can assume
\begin{equation}\label{eq-3.2}
\int_0^T p_0(t)\,dt = 0.
\end{equation}
Let $q(t) = q_{k}(t)$ be the solution of equation \eqref{eq-2.1} such that $q(0) = x_h$ and $q'(0)= 0.$
Recall that $x_h$ (depending on the coefficient $k$) is the solution of $\Phi_{k}(x) = \Phi_{k}(x_u)$
with $x > x_s$ or, equivalently, the point $(x_h,0)$ is the intersection of the homoclinic trajectory $\mathcal{H}$
with the $x-$axis. The curve $t\mapsto (q(t),q'(t))$ is a particular parametrization of $\mathcal{H}$
and it is unique up to a shift in the time variable. Our choice, which is the standard one in similar situations, is convenient because $q(t)$ is an even function.
Moreover, by standard results on hyperbolic saddle points, note that $|q(t)- x_u| + |q'(t)| \to 0$ with exponential decay as $t\to \pm\infty$ (cf. \cite[Ch. III.6]{Ha-80}).
Thus, in particular, the improper integrals $\int_0^{+\infty} (q(t) - x_u)\,dt$ and  $\int_0^{+\infty} |q'(t)|\,dt$
are convergent.

Now, the Melnikov function associated with system $(\mathscr{S}_1)$ for $p(t)$ as in
\eqref{eq-3.1}, is given by
\begin{equation}\label{eq-3.3}
\Delta(\alpha):= \int_{-\infty}^{+\infty} q'(t)p_0(t+\alpha)\,dt.
\end{equation}
Notice that, by the $T$-periodicity of $p_0(t)$, it turns out that also $\Delta(\alpha)$ is a $T$-periodic function.
Moreover, from \eqref{eq-3.2} we have $\int_{0}^T \Delta(\alpha)\,d\alpha =0,$
so that either $\Delta\equiv 0$ or $\Delta(\alpha)$ changes its sign.

An application of the Melnikov method to system
\begin{equation}\label{eq-3.4}
\begin{cases}
x'=y,\\
y'=-f(x)+k + \varepsilon p_0(t),
\end{cases}
\end{equation}
gives the following result (cf.~\cite[Th.~4.5.3]{GuHo-83} or \cite[Th.~28.1.7]{Wi-03}).
\begin{theorem}\label{th-3.1} Assume $(H_{1})$ and let $(q(t),q'(t))$ be the homoclinic solution at the saddle point $B=B_k$
for the autonomous system \eqref{eq-2.2} for some $k > 0.$ Let also $p_0$ be a sufficiently smooth, $C^r$ for $r\geq 2$,
$T$-periodic function satisfying $\eqref{eq-3.2}$. If there exists $\alpha \in [0,T[$ such that $\Delta(\alpha) = 0$ and $\Delta'(\alpha) \not=0,$
then there is $\varepsilon_0 > 0$ such that for each $\varepsilon$ with $0 < |\varepsilon| < \varepsilon_0$
a Smale horseshoe occurs for some iterate of the Poincar\'{e} map associated with system \eqref{eq-3.4}.
\end{theorem}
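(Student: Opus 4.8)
The plan is to recognize Theorem~\ref{th-3.1} as an instance of the classical Melnikov paradigm combined with the Smale--Birkhoff homoclinic theorem, so that the real work consists in verifying that the hypotheses of these two results are met by system \eqref{eq-3.4} and in identifying \eqref{eq-3.3} as the correct Melnikov function. First I would pass to the autonomous suspension of \eqref{eq-3.4} on the extended phase space $\mathbb{R}^2\times(\mathbb{R}/T\mathbb{Z})$, for which the Poincar\'e map $\Psi_\varepsilon$ is the return map to the section $\{t=0\}$. Because $f\in C^r$ and $p_0\in C^r$ with $r\geq 2$, the local flow is $C^r$ and hence $\Psi_\varepsilon$ is a $C^r$ diffeomorphism on its domain, depending $C^r$-smoothly on $\varepsilon$. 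For $\varepsilon=0$ the map $\Psi_0$ coincides with the time-$T$ map of the autonomous system \eqref{eq-2.2}, whose saddle $B=(x_u,0)$ is a fixed point; strict convexity in $(H_1)$ forces $f'(x_u)<0$ (since $x_u<0$ and $f$ is strictly decreasing there), so the linearization has real eigenvalues $\pm\sqrt{-f'(x_u)}$ and $B$ is a hyperbolic fixed point of $\Psi_0$. By the standard persistence theorem for hyperbolic fixed points, for $|\varepsilon|$ small $\Psi_\varepsilon$ has a unique hyperbolic fixed point $B_\varepsilon$ near $B$, with local stable and unstable manifolds depending $C^r$-smoothly on $\varepsilon$.

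The core of the argument is the first-order measurement of the splitting of the global invariant manifolds $W^s(B_\varepsilon)$ and $W^u(B_\varepsilon)$, which for $\varepsilon=0$ coincide along the homoclinic loop $\mathcal{H}$ parametrized by $t\mapsto(q(t),q'(t))$. Following the Melnikov construction, I would write the perturbed stable and unstable trajectories as perturbations of $(q(t),q'(t))$, expand them to first order in $\varepsilon$, and measure the signed distance between the two manifolds along the direction normal to $\mathcal{H}$ at the base point determined by the phase parameter $\alpha$. Since the unperturbed vector field $(y,-f(x)+k)$ is Hamiltonian, hence divergence-free, the usual exponential weight in the Melnikov integral drops out and the leading term of the distance is proportional to
\begin{equation*}
\int_{-\infty}^{+\infty} q'(t)\,p_0(t+\alpha)\,dt = \Delta(\alpha),
\end{equation*}
the proportionality factor being $\varepsilon$ divided by the (nonzero) norm of the unperturbed vector field at the base point, up to an $O(\varepsilon^2)$ remainder. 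The convergence of this improper integral, and of the variational integrals producing it, is guaranteed by the exponential decay $|q(t)-x_u|+|q'(t)|\to 0$ as $t\to\pm\infty$ noted before the statement, together with the boundedness of the continuous periodic function $p_0$; this is exactly the point where the hyperbolicity of $B$ and the smoothness in $(H_1)$ enter.

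Granting the distance expansion, the hypothesis $\Delta(\alpha)=0$, $\Delta'(\alpha)\neq 0$ says that $\alpha$ is a simple zero of $\Delta$. Dividing the distance by $\varepsilon$ and applying the implicit function theorem, for each small $\varepsilon\neq 0$ the distance function then has a simple (transversal) zero near $\alpha$, which means $W^s(B_\varepsilon)$ and $W^u(B_\varepsilon)$ intersect transversally: the diffeomorphism $\Psi_\varepsilon$ possesses a transversal homoclinic point to the hyperbolic fixed point $B_\varepsilon$. At this stage I would invoke the Smale--Birkhoff homoclinic theorem, as recalled in the Introduction: there is an integer $N$ and a compact hyperbolic invariant set $\Lambda$ such that $\Psi_\varepsilon^N$ restricted to $\Lambda$ is topologically conjugate to the two-sided shift $(\Sigma_2,\sigma)$, that is, a Smale horseshoe occurs for the $N$-th iterate of the Poincar\'e map, which is the desired conclusion.

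I expect the main obstacle to be the rigorous derivation of the first-order distance formula: controlling the $\varepsilon$-expansions of the perturbed manifolds uniformly on the non-compact homoclinic orbit and justifying the interchange of limits that collapses the two one-sided Melnikov integrals into the single convergent integral \eqref{eq-3.3}. Since all of this is carried out in full generality in \cite[Th.~4.5.3]{GuHo-83} and \cite[Th.~28.1.7]{Wi-03}, in practice the proof reduces to checking the three hypotheses of those theorems, namely $C^r$ smoothness, a hyperbolic saddle with a homoclinic orbit, and a simple zero of the Melnikov function, all of which follow from $(H_1)$, the phase-plane analysis of Section~\ref{section-2}, and the assumption on $\Delta$.
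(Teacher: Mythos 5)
Your proposal is correct and takes essentially the same route as the paper: the paper offers no independent proof of Theorem~\ref{th-3.1}, stating it as a direct application of classical Melnikov theory via the citations \cite[Th.~4.5.3]{GuHo-83} and \cite[Th.~28.1.7]{Wi-03}, and your outline (hyperbolicity of $B_k$ from $(H_1)$ since $f'(x_u)<0$, persistence of the saddle, the first-order splitting distance reducing to \eqref{eq-3.3} because the unperturbed field is divergence-free, a simple zero of $\Delta$ yielding a transversal homoclinic point, and then the Smale--Birkhoff theorem producing the horseshoe for some iterate of the Poincar\'e map) is precisely the argument packaged in those references. Your verification of the hypotheses --- smoothness, the saddle-with-homoclinic geometry from Section~\ref{section-2}, and exponential decay ensuring convergence of the Melnikov integral --- is exactly the checking the paper leaves implicit.
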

The result expressed in Theorem~\ref{th-3.1} is robust for small smooth perturbations. More in detail, the presence of a Smale horseshoe
is guaranteed also for system
\[\begin{cases}
x'=y,\\
y'=-cy -f(x)+k + \varepsilon p_0(t),
\end{cases}\]
provided that $c$ is sufficiently small, depending on $\varepsilon.$ Hence the result applies to
equation
\[u'' + cu' + f(u) = k + \varepsilon p_0(t)
\]
as well. More precisely, if we write the coefficient $c$ as
\[c:= \varepsilon c_0\,,\]
the Melnikov function takes the form
\begin{equation}\label{eq-3.3b}
\Delta(\alpha):= \int_{-\infty}^{+\infty} \left( q'(t)p_0(t+\alpha) - c_0 q'(t)^2\right)\,dt
\end{equation}
and Theorem~\ref{th-3.1} applies to system
\[\begin{cases}
x'=y,\\
y'=-\varepsilon c_0y -f(x)+k + \varepsilon p_0(t).
\end{cases}\]

Usually the test of the existence of a simple zero for the Melnikov function is a hard task, especially
if an explicit analytical expression for $q(t)$ is not given. The first important and pioneering
applications of this method to some second order nonlinear ODEs, such as the pendulum or the Duffing equation,
have taken advantage of the fact that the expression of $q(t)$ was known (see \cite[p.~191]{GuHo-83}).

On the contrary, when an explicit expression of $q(t)$  is not given, some results can be still produced by exploiting further qualitative information
about the homoclinic orbit or even about the forcing term, if they are available.
From this point of view, we refer to the work \cite{BaFe-02b} of Battelli and Fe\v{c}kan since they have evaluated the Melnikov function
when $q(t)$ is a rational function of $\exp(t).$
A general result, which does not require any specific assumption on $q(t)$ by involving only a simply verifiable condition on $p_0(t)$,
was obtained by Battelli and Palmer in \cite{BaPa-93}. This result applies to system \eqref{eq-3.4} provided that the period of the forcing term is sufficiently large.
For this reason, instead of \eqref{eq-3.4}, it is convenient to consider the system
\begin{equation}\label{eq-3.5e}
\begin{cases}
x'=y,\\
y'=-f(x)+k + \varepsilon^2 p_0(\varepsilon t).
\end{cases}
\end{equation}
In this setting, we can state what follows (cf.~\cite[p.~293, Theorem]{BaPa-93}).
\begin{theorem}\label{th-3.2} Assume $(H_{1})$ with $f\in C^{r+3}$, for $r\geq 5$,
and let $(q(t),q'(t))$ be the homoclinic solution at the saddle point $B=B_k$ for the autonomous system \eqref{eq-2.2} for some $k > 0$.
Let also $p_0$ be a sufficiently smooth, $C^{r+3}$ for $r\geq 5$, $T$-periodic function satisfying $\eqref{eq-3.2}$.
If there exists $\alpha \in [0,T[$ such that
\[p_0'(\alpha)= 0\not= p_0''(\alpha),\]
then there is $\varepsilon_0 > 0$ such that for each $\varepsilon$ with $0 < |\varepsilon| < \varepsilon_0$
a Smale horseshoe occurs for some iterate of the Poincar\'{e} map associated with system \eqref{eq-3.5e}.
\end{theorem}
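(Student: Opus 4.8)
The plan is to recognize the statement as a direct application of the Battelli--Palmer theorem of \cite{BaPa-93}, whose entire purpose is to handle exactly the regime of a slowly varying forcing (equivalently, a forcing of very large period), and then to verify that its hypotheses are met and that its bifurcation function collapses to a condition on $p_0'$. First I would record the structural facts about the unperturbed system \eqref{eq-2.2}: under $(H_1)$ the point $B_k=(x_u,0)$ is a genuine hyperbolic saddle, since strict convexity of $f$ with $f'(0)=0$ and $x_u<0$ forces $f'(x_u)<0$, so the linearization has real eigenvalues $\pm\sqrt{-f'(x_u)}$. Consequently the homoclinic solution $q(t)$ approaches $x_u$ with exponential rate as $t\to\pm\infty$, together with $q'(t)$, so that all the integrals appearing below converge. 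I would also keep in mind that the forcing $\varepsilon^2p_0(\varepsilon t)$ is $(T/\varepsilon)$-periodic, so the Poincar\'e map under discussion is the time-$(T/\varepsilon)$ map.

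The heart of the argument is the computation of the effective (adiabatic) Melnikov function for the slowly varying forcing. Writing the phase as $\theta=\varepsilon t_0$, the part of the Melnikov function carrying the phase dependence is, up to the overall factor $\varepsilon^2$, of the form $\int_{-\infty}^{+\infty} q'(t)\,p_0(\varepsilon t+\theta)\,dt$, and I would expand $p_0(\varepsilon t+\theta)$ in powers of $\varepsilon t$ by slow variation. The decisive observation is that the two ends of the homoclinic approach the \emph{same} saddle, so $\int_{-\infty}^{+\infty} q'(t)\,dt=q(+\infty)-q(-\infty)=0$ and the leading term $p_0(\theta)\int q'$ vanishes identically. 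The first surviving contribution is therefore of order $\varepsilon$ and equals $\varepsilon\,p_0'(\theta)\int_{-\infty}^{+\infty} t\,q'(t)\,dt$; integrating by parts and using the exponential decay of $q(t)-x_u$ gives $\int_{-\infty}^{+\infty} t\,q'(t)\,dt=-\int_{-\infty}^{+\infty}(q(t)-x_u)\,dt=:-C$, with $C>0$ because $q(t)>x_u$ along the whole loop $\mathcal{H}$. Moreover, since $q$ is even, $q'$ is odd and every even-order integral $\int t^{2n}q'(t)\,dt$ vanishes, so the expansion is governed by the odd derivatives of $p_0$, the leading one being $p_0'$. Thus, up to a nonzero multiplicative constant and higher-order terms, the effective Melnikov function is a nonzero multiple of $p_0'(\theta)$.

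With this in hand the conclusion is immediate at the formal level: the bifurcation function vanishes precisely when $p_0'(\theta)=0$, and its derivative with respect to the phase is, to leading order, a nonzero multiple of $p_0''(\theta)$, hence nonzero exactly when $p_0''(\theta)\neq0$. So the hypothesis $p_0'(\alpha)=0\neq p_0''(\alpha)$ — that $p_0$ has a nondegenerate critical point — is precisely the statement that the effective Melnikov function has a simple zero at $\theta=\alpha$. By the Battelli--Palmer theorem this simple zero produces, for all $0<|\varepsilon|<\varepsilon_0$, a transversal homoclinic point of the Poincar\'e map to the hyperbolic periodic orbit persisting near $B_k$; the Smale--Birkhoff theorem then yields a hyperbolic invariant set on which some iterate of the Poincar\'e map is conjugate to the shift on two symbols, i.e.\ a Smale horseshoe.

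The main obstacle, and the reason the result is genuinely nontrivial rather than a one-line Melnikov computation, is the rigorous justification of the step just sketched. Because the naive first-order (fast-oscillation) Melnikov term vanishes, one is forced into the slowly varying regime, where the separatrix splitting must be extracted by a careful asymptotic analysis and the remainder terms controlled uniformly in $\varepsilon$ — in the spirit of Palmer's exponential-dichotomy and Lyapunov--Schmidt reduction rather than through the classical Melnikov integral alone. Keeping these error terms below the leading algebraic term is exactly what forces the high regularity $f,p_0\in C^{r+3}$ with $r\geq5$, and it is this technical core that I would import wholesale from \cite{BaPa-93} rather than reprove.
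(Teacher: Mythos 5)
Your proposal is correct and takes essentially the same route as the paper: Theorem~\ref{th-3.2} is presented there purely as a citation of the Battelli--Palmer theorem (\cite[p.~293, Theorem]{BaPa-93}) applied to system \eqref{eq-3.5e}, with no proof supplied beyond that reference, which is exactly your logical structure of verifying the saddle's hyperbolicity under $(H_1)$ and importing the technical core wholesale. Your added heuristic --- expanding $\int_{-\infty}^{+\infty} q'(t)\,p_0(\varepsilon t+\theta)\,dt$ in the slow variable, using $\int_{-\infty}^{+\infty} q'(t)\,dt=0$ and $\int_{-\infty}^{+\infty} t\,q'(t)\,dt=-\int_{-\infty}^{+\infty}\left(q(t)-x_u\right)dt\neq 0$ to identify the effective bifurcation function as a nonzero multiple of $p_0'(\theta)$ --- is a sound and correct motivation for the nondegenerate-critical-point hypothesis, but it does not alter the fact that both you and the paper rest the actual proof on \cite{BaPa-93}.
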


We conclude this section by giving an example of application of these results to the AP periodic problem.
In order to do this, given $\omega > 0$, we suppose that $p_0(t):= \sin(\omega t)$ is the periodic forcing term of period $T:=2\pi/\omega.$
Using the properties of $p_{0}(t)$ we can show the following consequence.

\begin{corollary}\label{cor-3.1}
Assume $(H_{1})$ and let $(q(t),q'(t))$ be the homoclinic solution at the saddle point $B=B_k$
for the autonomous system \eqref{eq-2.2} for some $k > 0.$
Then, for any $\omega > 0$ there exists $\varepsilon_0 = \varepsilon_0(\omega) > 0$
such that for each $\varepsilon$ with $0 < |\varepsilon| < \varepsilon_0$
a Smale horseshoe occurs for some iterate of the Poincar\'{e} map associated with system
\begin{equation}\label{eq-3.4om}
\begin{cases}
x'=y,\\
y'=-f(x)+k + \varepsilon \sin(\omega t).
\end{cases}
\end{equation}
The same result also holds for the damped system
\begin{equation}\label{eq-3.4omc}
\begin{cases}
x'=y,\\
y'=-\varepsilon c_0 y -f(x)+k + \varepsilon \sin(\omega t),
\end{cases}
\end{equation}
for $c_0$ sufficiently small.
\end{corollary}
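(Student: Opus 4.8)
The plan is to apply Theorem~\ref{th-3.1} with $p_0(t)=\sin(\omega t)$, so the whole task reduces to exhibiting an $\alpha$ at which the Melnikov function $\Delta$ of \eqref{eq-3.3} has a simple zero. First I would use the evenness of $q$ (equivalently, the oddness of $q'$) to compute $\Delta$ explicitly. Expanding $\sin(\omega(t+\alpha))=\cos(\omega\alpha)\sin(\omega t)+\sin(\omega\alpha)\cos(\omega t)$ and inserting this into \eqref{eq-3.3}, the term carrying $\cos(\omega t)$ integrates to zero, since $q'(t)\cos(\omega t)$ is odd and integrable (recall $|q'(t)|$ decays exponentially). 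Hence $\Delta(\alpha)=\mathcal{M}(\omega)\cos(\omega\alpha)$, where $\mathcal{M}(\omega):=\int_{-\infty}^{+\infty}q'(t)\sin(\omega t)\,dt$. If $\mathcal{M}(\omega)\neq 0$, then $\alpha_0:=\pi/(2\omega)$ gives $\Delta(\alpha_0)=0$ and $\Delta'(\alpha_0)=-\omega\,\mathcal{M}(\omega)\sin(\omega\alpha_0)=-\omega\,\mathcal{M}(\omega)\neq 0$, i.e.\ a simple zero, and Theorem~\ref{th-3.1} yields the Smale horseshoe. Thus everything hinges on the single scalar quantity $\mathcal{M}(\omega)$.

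The core of the argument is therefore to show $\mathcal{M}(\omega)\neq 0$ for every $\omega>0$. I would integrate by parts, writing $v(t):=q(t)-x_u$, which is even, positive, and exponentially decaying, with $v(0)=x_h-x_u$; since the boundary terms vanish one obtains $\mathcal{M}(\omega)=-\omega\int_{-\infty}^{+\infty}v(t)\cos(\omega t)\,dt=-\omega\,\widehat v(\omega)$, so it suffices to show that the cosine transform of the homoclinic pulse $v$ does not vanish. Here I would exploit the strict convexity in $(H_1)$: the graph of $f$ lies strictly above its tangent at $x_u$, so (using $f(x_u)=k$ and $\mu^2:=-f'(x_u)>0$) the function $\psi(t):=f(q(t))-k-f'(x_u)\,(q(t)-x_u)$ is strictly positive for all $t$. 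Since $q''=k-f(q)$, this is precisely the identity $(\mu^2-\partial_t^2)v=\psi>0$; convolving with the positive Green's function $\tfrac{1}{2\mu}e^{-\mu|t|}$ of $\mu^2-\partial_t^2$ gives $v=\tfrac{1}{2\mu}e^{-\mu|\cdot|}\ast\psi$ and hence $\widehat v(\omega)=\widehat\psi(\omega)/(\mu^2+\omega^2)$. The problem is thereby reduced to the non-vanishing of the cosine transform $\widehat\psi(\omega)$ of the strictly positive, even, exponentially decaying pulse $\psi$.

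The hard part will be exactly this last non-vanishing. Positivity of $v$ (or of $\psi$) alone is \emph{not} enough, and P\'olya's classical criterion does not apply directly, since $v$ is concave near its maximum ($v''(0)=k-f(x_h)<0$) rather than convex on all of $[0,+\infty[$; nevertheless the explicitly solvable models, in which the homoclinic pulse is of $\mathrm{sech}^2$ type and $\widehat v(\omega)>0$ for all $\omega$, show that the required positivity does hold for this saddle--center geometry. To close it in general I would pursue one of two routes: (i) use the convex representation $\psi=\Psi(v)$ with $\Psi(v)=f(x_u+v)-k-f'(x_u)v$ convex and increasing ($\Psi(0)=\Psi'(0)=0$), and write $\Psi(v)=\int_0^{\infty}(v-w)_+\,\Psi''(w)\,dw$ to realize $\psi$ as a positive superposition of truncated pulses and deduce a positive-definiteness bound; or (ii) invoke the analyticity allowed by the $C^r$ regularity and shift the contour in $\widehat\psi(\omega)=\int_{-\infty}^{+\infty}\psi(t)e^{-i\omega t}\,dt$ into the strip of analyticity, so that the transform is controlled by the residues at the complex singularities of $q$ and cannot vanish. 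Either route delivers $\mathcal{M}(\omega)\neq 0$ and completes the undamped case \eqref{eq-3.4om}.

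Finally, for the damped system \eqref{eq-3.4omc} I would repeat the scheme with the modified Melnikov function \eqref{eq-3.3b}: the same parity computation gives $\Delta(\alpha)=\mathcal{M}(\omega)\cos(\omega\alpha)-c_0\int_{-\infty}^{+\infty}q'(t)^2\,dt$, a cosine shifted by the positive constant $c_0\|q'\|_{L^2}^2$. Once $\mathcal{M}(\omega)\neq 0$ is established, this function still crosses zero transversally provided $c_0\|q'\|_{L^2}^2<|\mathcal{M}(\omega)|$, that is, for $c_0$ sufficiently small, and at such a crossing one has $\sin(\omega\alpha)\neq 0$, whence $\Delta'\neq 0$; the damped form of Theorem~\ref{th-3.1} then applies. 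In this way the sinusoidal forcing automatically supplies simple zeros of the Melnikov function at every frequency, the only genuine analytic input being the non-vanishing of $\mathcal{M}(\omega)$.
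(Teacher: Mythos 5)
Your first paragraph reproduces exactly the paper's own reduction: the oddness of $q'$ kills the $\cos(\omega t)$ component, so $\Delta(\alpha)=\mathcal{M}(\omega)\cos(\omega\alpha)$ with $\mathcal{M}(\omega)=-2\omega\int_0^{+\infty}v(t)\cos(\omega t)\,dt$, where $v:=q-x_u$, and the whole corollary rests on showing that this transform never vanishes; your treatment of the damped case (requiring $c_0\|q'\|_{L^2}^2<|\mathcal{M}(\omega)|$) is also a correct way to finish once that is known. The problem is that the crucial step, $\mathcal{M}(\omega)\neq0$ for every $\omega>0$, is never actually proved. Your Green's-function identity $(\mu^2-\partial_t^2)v=\psi>0$ and the resulting formula $\widehat v(\omega)=\widehat\psi(\omega)/(\mu^2+\omega^2)$ are correct, but they only trade one cosine transform for another, and neither of your two proposed routes closes the gap. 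Route (ii) requires shifting the contour into a strip of analyticity, but under $(H_1)$ the nonlinearity $f$ is only of class $C^r$, so $q$ need not be analytic and there are no ``complex singularities of $q$'' to control the transform. Route (i) decomposes $\psi$ as a positive superposition of truncated pulses $(v-w)_+$, but these pieces are not convex on $[0,+\infty[$ either, their cosine transforms carry no sign in general, and no positive-definiteness bound is derived; your own (correct) remark that positivity alone is insufficient applies verbatim to each piece. Appealing to the explicitly solvable $\mathrm{sech}^2$-type models is evidence, not a proof. So the proposal stops exactly where the difficulty begins.

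The ingredient you are missing is not positivity but \emph{monotonicity}: for $t\geq0$ the homoclinic solution runs down the lower branch of $\mathcal{H}$, so $v(t)=q(t)-x_u$ is positive, strictly decreasing and integrable on $[0,+\infty[$. This is what the paper exploits. Setting $\eta(\omega):=\int_0^{+\infty}v(t)\cos(\omega t)\,dt$ (so that $\mathcal{M}(\omega)=-2\omega\,\eta(\omega)$), the paper shows via an alternating-series argument --- splitting the relevant integral into half-periods and using that $v$ is positive and decreasing, so the half-period contributions $\Xi_j$ decrease to zero --- that $\eta$ is strictly decreasing in $\omega$. Since $\eta(\omega)\to\int_0^{+\infty}v(t)\,dt>0$ as $\omega\to0^+$ and $\eta(\omega)\to0$ as $\omega\to+\infty$ by the Riemann--Lebesgue lemma, a strictly decreasing function with these limits can never cross zero; hence $\eta(\omega)>0$, and therefore $\Delta$ has simple zeros at $\omega\alpha=\pi/2 \pmod{\pi}$, for \emph{every} $\omega>0$. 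Some argument of this kind, exploiting the monotone decay of $v$ rather than its positivity, must be supplied before Theorem~\ref{th-3.1} can be invoked and your proof is complete.
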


\begin{proof}
For simplicity, we investigate only the frictionless case
because with a similar argument one can also derive the result when a small friction term $c_{0}$ is present.

Recalling that $q'(t)$ is an odd function, from \eqref{eq-3.3} we obtain
\begin{equation*}
\Delta(\alpha)= \int_{-\infty}^{+\infty} q'(t)\sin(\omega t+\omega \alpha)\,dt
= - 2\omega \cos(\omega\alpha)\eta(\omega),
\end{equation*}
for
\begin{equation*}
\eta(\omega):=\int_{0}^{+\infty} \tilde{q}(t)\cos(\omega t)\,dt,\quad\text{with }\;  \tilde{q}(t):= q(t) - x_u\,.
\end{equation*}
In this manner, we have reduced the search of a simple zero for $\Delta(\alpha)$ to the verification that
$\eta(\omega)\not=0.$

Since $\eta'(\omega) = -\omega \int_{0}^{+\infty}
\tilde{q}(t)\sin(\omega t)\,dt = - \int_{0}^{+\infty} \tilde{q}(\xi/\omega)\sin(\xi)\,d\xi$, we find that
\begin{equation*}
-\eta'(\omega) = \sum_{j=0}^{\infty} (-1)^j\int_{j\pi}^{(j+1)\pi} \tilde{q}\left(\dfrac{\xi}{\omega}\right)|\sin(\xi)|\,d\xi\,
= \sum_{j=0}^{\infty} (-1)^j\Xi_j
\end{equation*}
where we have set
\begin{equation*}
\Xi_j:=\int_{0}^{\pi} \tilde{q}\left(\dfrac{t + j\pi}{\omega}\right)\sin(t)\,dt.
\end{equation*}
By observing that $\tilde{q}(t)$ is positive and decreasing on $[0,+\infty[$, follows that the sequence
$(\Xi_j)_{j}$ is positive, decreasing and $\Xi_j\to 0$ as $j\to +\infty.$ The theory of alternating series
guarantees that $\sum (-1)^j \Xi_j > 0$ and hence $\eta'(\omega) < 0$ for each $\omega > 0.$ Since $\eta(\omega) \to
\int_{0}^{+\infty} \tilde{q}(t)\,dt > 0 $ as $\omega\to 0^+,$ we conclude that
either $\eta(\omega) > 0$ for each $\omega > 0$ or $\eta(\omega)$ vanishes exactly once.
On the other hand, by the Riemann-Lebesgue lemma, it follows that $\eta(\omega) \to 0$ as $\omega\to +\infty.$ This implies that the second
alternative never occurs because $\eta$ is strictly decreasing. Hence, in view of Theorem~\ref{th-3.1} the proof is completed.
\end{proof}

\begin{remark}
It is interesting to observe that Corollary~\ref{cor-3.1} is applicable to nonlinearities that satisfy the assumptions,
interpreted in the context of periodic problems, which were made by Ambrosetti and Prodi.
In fact, according to \cite[p.~239]{AmPr-72},
let us consider a function $f: {\mathbb R} \to {\mathbb R}$ of class $C^2$ satisfying:
$f(0) = 0,$ $f''(s) > 0$ for all $s\in {\mathbb R}$ and
\begin{equation*}
 \ell':=\lim_{s\to -\infty} f'(s) < \lambda _1 < \ell'' :=\lim_{s\to +\infty } f'(s)< \lambda_2\,.
\end{equation*}
We recall that in the context of a periodic problem the first two eigenvalues associated
with the differential operator $u\mapsto -u''$ subject to $T$-periodic boundary conditions are $\lambda_{1} := 0$ and $\lambda_{2}:= \omega^2$
where $\omega:=(2\pi/T)$.

An application of the Ambrosetti-Prodi abstract theory \cite{AmPr-72} to this situation, yields to the
following result: \textit{there exists $k_0 = k_0(\varepsilon)$ such that, the $T$-periodic problem associated with equation \eqref{eq-3.4om},
has no solutions, exactly one or two solutions according as $k<k_0$, $k=k_0$ or $k> k_0$.} In \cite{Or-89, Or-90}, Ortega
has proved that this result is still valid for the equation $u'' + cu' + f(u) = k + p(t)$, for $c > 0$,
and information about the stability of the solutions are given. In particular,
if we apply such results to system \eqref{eq-3.4omc} we know that if $\ell'' \leq \omega^2/4,$  then, for $k > k_0$,
one $T$-periodic solution is unstable, while the other one is asymptotically stable (cf. \cite[Th.~2.1]{Or-89}).

Since Corollary~\ref{cor-3.1} can be applied without any restriction on $\omega,$
we have that chaos coexists in the same range of parameters where both the theorem of Ambrosetti-Prodi and the ones of Ortega are valid.
Rather surprisingly (at first glance) however there is no real conflict between these results.
Indeed, Melnikov's method ensures the existence of a Smale horseshoe for a suitable iterate, $\Psi^N$, of the associated Poincar\'{e} map;
in particular, this fact also implies the existence of large order subharmonics.
On the other hand, the results in \cite{Or-89,Or-90} prevent the existence of subharmonics of order two.
The existence of a great amount of subharmonic solutions for the AP periodic problem has already been obtained in
\cite{BoZa-13, Re-97, ReZa-96} for the Hamiltonian case, i.e. system $(\mathscr{S}_1)$,
using the Poincar\'{e}-Birkhoff twist theorem (see also \cite{DPMaMu-92, LaMK-87, LaMK-90} for some previous relevant contributions in this direction).
In any case, the coexistence of stability regions and chaos zones is a well-known fact in the theory of  Hamiltonian systems (see \cite[Ch.~III]{Mo-73}).
\end{remark}

The advantage of Corollary~\ref{cor-3.1} is that no condition on $\omega$, and thus on the period $T$, is required.
Nevertheless, this result applies to a limited class of forcing terms.
To achieve an analogous goal for a broad family of periodic functions $p_0,$ we should look at Theorem~\ref{th-3.2}.
In this case, however, we have to take into account the fact that the period of the forcing term is modified by the parameter $\varepsilon > 0$.
In fact, if $p_{0}$ is $T$-periodic, then the forcing term in \eqref{eq-3.5e} has period $T_{\varepsilon}:=T/\varepsilon$.
Observe that in this case, the second eigenvalue of the corresponding periodic problem becomes $\lambda_2 := (2\pi/T)^2\varepsilon^2.$
Hence, for a sufficiently small $\varepsilon > 0$, we will have $f'(+\infty) > \lambda_2$.
This means that the nonlinearity jumps certainly the second eigenvalue (and maybe it crosses many others more),
therefore we enter in a range of parameters for which several $T_{\varepsilon}$-periodic solutions exist.
Indeed, we know that at least the Hamiltonian system $(\mathscr{S}_1)$ has plenty of periodic solution (see \cite{LaMK-87, LaMK-90, Re-97, Wa-00, ZaZa-05}).
Hence, it is reasonable to expect to find also chaotic-like solutions for forcing terms which are not necessarily small. This will be discussed in the sequel.

To conclude this section, based on the applications of the Melnikov's method, we remark that results ensuring the chaotic behavior
of the Poincar\'{e} map, and not just its suitable iterate, can be found in the context of singular perturbation systems in \cite{Ch-99}.
These results, if applied to our case, lead to the introduction of further conditions on the nonlinearity or the period. 

%------------------------
%--- Subsection 3.2
\subsection{Topological horseshoes}
We have already seen that check the hypothesis on the simplicity of the zero for the Melnikov function
may be very laborious when an explicit analytical expression of the homoclinic solution
is not available. Consequently, we discuss two different approaches connected with the Melnikov's theory
and another one which is called stretching along the paths method (or briefly SAP method).
Here weaker requirements are made in order to achieve the presence of a \textit{topological horseshoe}, instead of a Smale horseshoe.

\subsubsection{Slowly varying systems}
In \cite{BaFe-02a}, Battelli and Fe\v{c}kan have generalized the hypothesis about the existence of a simple zero for the
Melnikov function using topological degree and by simply assuming that $\Delta(\alpha)$ changes it sign. This weaker condition, clearly
enlarges the range of applicability of the theorem and the corresponding result reads as follows
(cf.~\cite[Theorem~4.4 and Remark~5.4]{BaFe-02a}).
\begin{theorem}\label{th-3.3}
Assume $(H_{1})$ and let $(q(t),q'(t))$ be the homoclinic solution at the saddle point $B=B_k$
for the autonomous system \eqref{eq-2.2} for some $k > 0$. Let also $p_0$ be a sufficiently smooth, $C^{r}$ for $r\geq 2$,
$T$-periodic function satisfying $\eqref{eq-3.2}$. If
\[\Delta\not\equiv 0,\]
then there is $\varepsilon_0 > 0$ such that for each $\varepsilon$ with $0 < |\varepsilon| < \varepsilon_0$
a topological horseshoe occurs for some iterate of the Poincar\'{e} map associated with \eqref{eq-3.4}.
\end{theorem}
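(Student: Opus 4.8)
The plan is to read the statement as the degree-theoretic version of the Melnikov method of Battelli and Fe\v{c}kan \cite{BaFe-02a}, and to exhibit the mechanism by which the mere sign change of $\Delta$ forces symbolic dynamics. As in Theorem~\ref{th-3.1}, I would view \eqref{eq-3.4} as a $T$-periodic perturbation of the autonomous Hamiltonian system \eqref{eq-2.2}, whose phase portrait contains the hyperbolic saddle $B_k$ and the homoclinic loop $\mathcal{H}$ through $(x_h,0)$. The first step is to set up an exponential dichotomy for the variational equation along the homoclinic solution $(q(t),q'(t))$; this is exactly where $(H_1)$ with $f'(x_u)<0$ is used, since it makes $B_k$ genuinely hyperbolic and forces $|q(t)-x_u|+|q'(t)|\to 0$ with exponential decay as $t\to\pm\infty$, a fact already recorded in the excerpt.

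The core of the argument is a Lyapunov--Schmidt reduction in which one seeks \emph{multibump} solutions of \eqref{eq-3.4}: for a prescribed bi-infinite sequence of widely spaced transition times, one looks for a bounded solution that shadows $\mathcal{H}$ near each prescribed time and remains close to $B_k$ in between. In the space of bounded solutions on $\mathbb{R}$, the dichotomy furnishes a right inverse on the complement of the one-dimensional kernel spanned by the tangent field $(q'(t),q''(t))$, and the infinite family of gluing conditions collapses to a system of scalar \emph{bifurcation equations}, one per bump, whose leading order is governed precisely by the Melnikov function evaluated at the relevant phase shift. When $\Delta$ has a simple zero these are solved by the implicit function theorem, yielding transversality and the Smale horseshoe of Theorem~\ref{th-3.1}; the point here is that, under the weaker hypothesis $\Delta\not\equiv 0$, the implicit function theorem is replaced by Brouwer degree.

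Since $\int_0^T\Delta\,d\alpha=0$ and $\Delta\not\equiv 0$, the function $\Delta$ changes sign, hence it takes strictly positive and strictly negative values and its Brouwer degree on a suitable interval is nonzero; this nonvanishing degree is exactly what substitutes for the nondegeneracy condition. It allows one to solve each bifurcation equation for \emph{every} admissible symbol sequence, producing a compact set $\Lambda$ of bounded solutions parametrized by the chosen sequence of transition times. Encoding each solution by its pattern of bumps yields a continuous surjection $\Pi:\Lambda\to\Sigma_m$ (for a suitable $m\geq 2$) intertwining the $N$-th iterate of the Poincar\'e map with the shift $\sigma$, i.e. a semiconjugacy; because the degree argument gives existence but not uniqueness, one obtains a \emph{topological} rather than a Smale horseshoe. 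To match the paper's definition, a periodic symbol sequence is treated by imposing the corresponding periodicity constraint on the transition times and rerunning the degree computation in the resulting finite-dimensional constrained problem, which produces a periodic point of the appropriate iterate with the prescribed period; once more it is the sign change of $\Delta$ that keeps the relevant degree nonzero.

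I expect the main obstacle to be the passage from the implicit function theorem to the degree argument in the infinite-dimensional gluing problem: one must show that the sign change of $\Delta$ propagates to a nonzero degree for the full system of bifurcation equations \emph{uniformly} in the arbitrarily long sequences of transition times, and that the higher-order remainders of the Lyapunov--Schmidt reduction do not destroy this degree. Controlling those remainders uniformly in the sequence length, which is what forces the smoothness requirement $C^r$ with $r\geq 2$ together with the smallness of $\varepsilon_0$, is the delicate technical point; the cleanest route is to invoke \cite[Theorem~4.4 and Remark~5.4]{BaFe-02a}, where precisely this uniform degree estimate is established.
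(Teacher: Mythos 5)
Your proposal is correct and takes essentially the same route as the paper: the paper gives no independent proof but presents Theorem~\ref{th-3.3} as a direct consequence of \cite[Theorem~4.4 and Remark~5.4]{BaFe-02a}, the only observation needed being the one you also make, that $\Delta\not\equiv 0$ together with $\int_0^T\Delta(\alpha)\,d\alpha=0$ (from \eqref{eq-3.2}) forces $\Delta$ to change sign, which is precisely the hypothesis of the cited degree-theoretic result. Your sketch of the exponential dichotomy, Lyapunov--Schmidt reduction and uniform degree estimate is a reasonable account of what lies inside that reference, but since you ultimately invoke the same citation, the two arguments coincide.
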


To show an application of Theorem~\ref{th-3.3}, we consider the system
\begin{equation}\label{eq-3.4e}
\begin{cases}
x'=y,\\
y'=-f(x)+k + \varepsilon p_0(\Omega t),
\end{cases}
\end{equation}
where $p_0:{\mathbb R} \to {\mathbb R}$ is a $T$-periodic function of class $C^2$ and $\Omega > 0$ is a fixed constant.
Clearly, this is an example of system $(\mathscr{S}_1)$ with a periodic forcing term $p(t) = k + \varepsilon p_0(\Omega t)$
of period $T_{\Omega}:= T/\Omega.$ In this context, we prove what follows.

\begin{corollary}\label{cor-3.2}
Assume $(H_{1})$ and let $(q(t),q'(t))$ be the homoclinic solution at the saddle point $B=B_k$
for the autonomous system \eqref{eq-2.2} for some $k > 0.$ Suppose that $p_{0}$ is not constant.
Then, there exists $\Omega_0 > 0$ such that for every $\Omega$ with $0 < \Omega < \Omega_0,$
there is  $\varepsilon_0 = \varepsilon_0(\Omega) > 0$ such that for each $\varepsilon$ with $0 < |\varepsilon| < \varepsilon_0$
a topological horseshoe occurs for some iterate of the Poincar\'{e} map associated with system \eqref{eq-3.4e}.
The same result also holds for the damped system
\begin{equation}\label{eq-3.4f}
\begin{cases}
x'=y,\\
y'=- \varepsilon c_0 y -f(x)+k + \varepsilon p_0(\Omega t),
\end{cases}
\end{equation}
for $c_0$ sufficiently small.
\end{corollary}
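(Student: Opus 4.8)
The plan is to reduce everything to Theorem~\ref{th-3.3}, whose only hypothesis is that the Melnikov function is not identically zero. First I would observe that system \eqref{eq-3.4e} is a particular instance of system \eqref{eq-3.4}: its forcing profile is the $T_\Omega$-periodic function $P_0(t):=p_0(\Omega t)$, which is again of class $C^2$ and, under the standing normalization $\int_0^T p_0\,dt=0$ of \eqref{eq-3.2} (equivalently $\int_0^{T_\Omega}P_0=0$), satisfies the zero-mean requirement. Consequently the Melnikov function attached to \eqref{eq-3.4e} is
\[
\Delta_\Omega(\alpha)=\int_{-\infty}^{+\infty} q'(t)\,p_0\bigl(\Omega(t+\alpha)\bigr)\,dt,
\]
and the whole corollary amounts to showing that $\Delta_\Omega\not\equiv 0$ once $\Omega$ is small enough, after which Theorem~\ref{th-3.3} supplies, for each such $\Omega$, the threshold $\varepsilon_0(\Omega)>0$ and the topological horseshoe.

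The main obstacle is that the naive leading term of $\Delta_\Omega$ as $\Omega\to 0$ vanishes: freezing the slowly varying argument would give $p_0(0)\int_{-\infty}^{+\infty}q'(t)\,dt=0$, since $q(\pm\infty)=x_u$. To extract the genuine leading order I would integrate by parts, writing $q'=\tilde q'$ with $\tilde q(t):=q(t)-x_u$. Because $\tilde q$ decays exponentially at $\pm\infty$ while $p_0$ is bounded, the boundary terms drop and
\[
\Delta_\Omega(\alpha)=-\,\Omega\int_{-\infty}^{+\infty}\tilde q(t)\,p_0'\bigl(\Omega t+\Omega\alpha\bigr)\,dt.
\]
The crucial idea is then to exploit the phase freedom: as $\alpha$ runs over a whole period $[0,T_\Omega)$, the quantity $\Omega\alpha$ sweeps the full period $[0,T)$ of $p_0$. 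Since $p_0$ is not constant, $p_0'\not\equiv 0$, so I may fix $\beta_0$ with $p_0'(\beta_0)\neq 0$ and test $\Delta_\Omega$ at $\alpha=\beta_0/\Omega$.

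Finally I would let $\Omega\to 0^+$ and invoke dominated convergence, the integrand being bounded by $\|p_0'\|_\infty\,\tilde q$ (integrable and independent of $\Omega$) with $p_0'(\Omega t+\beta_0)\to p_0'(\beta_0)$ pointwise by continuity of $p_0'$, to obtain
\[
\frac{\Delta_\Omega(\beta_0/\Omega)}{\Omega}\;\longrightarrow\;-\,p_0'(\beta_0)\int_{-\infty}^{+\infty}\tilde q(t)\,dt.
\]
The integral $\int_{-\infty}^{+\infty}\tilde q\,dt$ is strictly positive, because $\tilde q=q-x_u>0$ along the homoclinic orbit $\mathcal{H}(x_u)=\{x>x_u,\ E_k=\Phi_k(x_u)\}$, so the limit is nonzero. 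Hence there is $\Omega_0>0$ such that $\Delta_\Omega(\beta_0/\Omega)\neq 0$, and therefore $\Delta_\Omega\not\equiv 0$, for every $0<\Omega<\Omega_0$, which closes the frictionless case. For the damped system \eqref{eq-3.4f} I would run the identical computation with the Melnikov function \eqref{eq-3.3b}: the additional term $-c_0\int q'(t)^2\,dt$ is a nonzero constant in $\alpha$, so for $c_0$ small enough to remain in the perturbative regime where Theorem~\ref{th-3.3} applies, the conclusion $\Delta_\Omega\not\equiv 0$ is only reinforced.
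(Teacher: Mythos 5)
Your proposal is correct and follows essentially the same route as the paper's proof: reduction to Theorem~\ref{th-3.3}, integration by parts to write $\Delta(\alpha)=-\,\Omega\int_{-\infty}^{+\infty}\tilde q(t)\,p_0'(\Omega t+\Omega\alpha)\,dt$, and evaluation at a phase $\alpha$ of the form $\beta_0/\Omega$ where $p_0'(\beta_0)\neq 0$. The only difference is the final step: the paper splits the integral into a near and a far part and estimates both explicitly, doing this at a point where $p_0'>0$ and at one where $p_0'<0$ so as to exhibit a sign change, whereas you compute the limit of $\Delta_\Omega(\beta_0/\Omega)/\Omega$ by dominated convergence; both arguments deliver $\Delta\not\equiv 0$ for all sufficiently small $\Omega$, which is all that Theorem~\ref{th-3.3} requires.
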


\begin{proof}
We prove the statement for equation \eqref{eq-3.4e}, since the corresponding conclusion for \eqref{eq-3.4f} holds
as a consequence of the fact that the result in Theorem~\ref{th-3.3} is stable for small perturbations, being based on topological degree theory.
Moreover, note that the same analysis we are going to present can be repeated, in the latter case, for the Melnikov function in \eqref{eq-3.3b}.

Therefore, let us consider \eqref{eq-3.4e}. After an integration by parts, the Melnikov function defined in \eqref{eq-3.3} takes the form
\[\Delta(\alpha)=  - \Omega \int_{-\infty}^{+\infty}  \tilde{q}(t)p_0'(\Omega t+\Omega \alpha)\,dt,
\]
where $\tilde{q}(t) = q(t) - x_u\,.$ Since $p_0$ is not constant, there exists $s^{*}$ such that $p'_0(s^{*}) > 0.$
Then, there exist a constant $\delta^{*} > 0$ and an interval $[s^{*}- r^{*}, s^{*}+r^{*}]$ such that $p'(\xi) \geq \delta^{*}$ for all
$\xi \in [s^{*}- r^{*}, s^{*}+r^{*}]$. Taking $\alpha^{*} = \alpha^{*}(\Omega):= s^{*}/\Omega,$ we have that
\[\begin{split}
- \frac{\Delta(\alpha^{*})}{\Omega} &\geq  \int_{-r^{*}/\Omega}^{r^{*}/\Omega} \tilde{q}(t) p_{0}'(s^{*}+\Omega t)\,dt
-2 \|p_{0}'\|_{\infty}\,\int_{r^{*}/\Omega}^{+\infty} \tilde{q}(t)\,dt\\
 &\geq 2\delta^{*} \int_{0}^{r^{*}/\Omega}\tilde{q}(t)\,dt - 2 \|p_{0}'\|_{\infty}\,\int_{r^{*}/\Omega}^{+\infty} \tilde{q}(t)\,dt.
\end{split}\]
Since, one can deduce the existence of a constant $\Omega_1 > 0$ such that for each $\Omega$ with $0 < \Omega < \Omega_1$ it holds that
$\int_{0}^{r^{*}/\Omega}\tilde{q}(t)\,dt  > (\delta^{*})^{-1}\|p_{0}'\|_{\infty} \,\int_{r^{*}/\Omega}^{+\infty} \tilde{q}(t)\,dt$, then we have $\Delta(\alpha^{*}) < 0.$

Similarly, there exists $s_{*}$ such that $p'_{0}(s_{*}) < 0$. Accordingly, there are a constant $\delta_{*} > 0$ and an interval $[s_{*}- r_{*}, s_{*}+r_{*}]$
such that $p'(\xi) \leq - \delta_{*}$ for all $\xi \in [s_{*}- r_{*}, s_{*}+r_{*}]$. Taking now $\alpha_{*} = \alpha_{*}(\Omega):= s_{*}/\Omega,$
by an argument similar to the previous one, there exists a constant $\Omega_{2} > 0$
such that for each $\Omega$ with $0 < \Omega < \Omega_{2}$ we have $\Delta(\alpha_{*}) > 0.$
The conclusion now follows from Theorem~\ref{th-3.3} by taking $\Omega_{0}:=\min\{\Omega_{1},\Omega_{2}\}$.
\end{proof}

We stress the fact that Corollary~\ref{cor-3.2} applies to an \textit{arbitrary} nonconstant periodic
function of class $C^2$ provided that its period $T_{\Omega}$ is very large and its displacement from
a constant value $k > 0$ is very small.

As a next step, we plan to examine the case in which some form of
chaotic behavior can occur in situations when the forcing term has a sufficiently large period. On the other hand, we
will not assume any restriction regarding the smallness of the displacement. To this purpose, we
consider a different topological approach that comes from the Conley index theory and has been
considered by  Gedeon, Kokubu, Mischaikow and Oka in \cite{GKMO-00} to prove the
presence of chaotic solutions in slowly varying Hamiltonian systems. The method in \cite{GKMO-00} is stable
for small perturbations and applies also to systems which are not necessarily periodic in the time variable. Here,
we give an application to system
\begin{equation}\label{eq-3.1.1}
\begin{cases}
x'=y,\\
y'=-f(x)+p(\varepsilon t),
\end{cases}
\end{equation}
where $p: {\mathbb R} \to {\mathbb R}$ is a non-constant
periodic function of class $C^2$ such that $p(t) > 0$ for all $t\in {\mathbb R}.$
First we need to introduce a few definitions from \cite{GKMO-00}. Writing \eqref{eq-3.1.1} as
\begin{equation}\label{eq-3.1.1b}
\begin{cases}
x'=y,\\
y'=-f(x)+p(\theta),\\
\theta' = \varepsilon,
\end{cases}
\end{equation}
we set, for a moment, $\theta$ as a constant parameter and consider the planar autonomous Hamiltonian system
\begin{equation}\label{eq-3.1.1a}
\begin{cases}
x'=y,\\
y'=-f(x)+p(\theta).
\end{cases}
\end{equation}
Concerning this latter system, for each $\theta$ there exist an equilibrium point $A(\theta):=
(x_s(\theta),0)$ which is a center and a hyperbolic saddle equilibrium point $B(\theta):=(x_u(\theta),0)$ with a homoclinic orbit
enclosing $A(\theta).$ By definition, $f(x_u(\theta))= f(x_s(\theta)) = 0,$ with $x_u(\theta) < 0 < x_s(\theta).$
We denote also with ${\mathscr{A}}$ the set of all the points $(A(\theta), \theta)$ which is a curve of ${\mathbb R}^3.$
A solution $X(t):= (x(t),y(t),\theta(t))$ of system \eqref{eq-3.1.1b} is
said to oscillate $k$ times over an interval $I= [\theta^-,\theta^+]$ with respect to ${\mathscr{A}},$ if $k\in {\mathbb N}$
identifies the homotopy class of the closed loop
\begin{equation*}
\left(\bigcup_{\theta(t)\in I} X(t)\cup B(\theta(t))\right)\,\cup\,
\left(\bigcup_{\theta(t)\in \partial I} \overline{X(t)B(\theta(t))}\,\right)
\end{equation*}
in the fundamental group of ${\mathbb R}^3\setminus{\mathscr{A}}$ (isomorphic to $\mathbb{Z}$). Then, the results in
\cite{GKMO-00, KoMiOk-96}, applied to system \eqref{eq-3.1.1}, give the following conclusion.

\begin{theorem}\label{th-3.4} Assume $(H_{1})$ and let also $p: {\mathbb R} \to {\mathbb R}$ be a non-constant
periodic function of class $C^2$ such that $p(t)>0$ for all $t\in {\mathbb R}.$ Then there exists a choice of infinitely
may pairwise disjoint closed intervals $I_i :=[\theta_i^-,\theta_i^+]$ with
\begin{equation*}
\dots \theta_{i-1}^- < \theta_{i-1}^+ < \theta_{i}^- < \theta_{i}^+ < \theta_{i+1}^- < \theta_{i+1}^+  \dots, \quad i\in {\mathbb Z},
\end{equation*}
with the following property: for any given positive integer $K$ there exists $\bar{\varepsilon} > 0$ such that for any $\varepsilon$ with
$0 < \varepsilon < \bar{\varepsilon},$ there are at least two non-negative integers $m'_{i}$ and $m''_{i}$ (for $i$ odd)
and at least $K$ non-negative integers $m^{1}_{i},\dots m^{K}_{i}$ (for $i$ even), such that for each
sequence $(s_i)_{i\in {\mathbb Z}}$ of integers with $s_{2i+1} \in \{m'_{2i+1}, m''_{2i+1}\}$
and $s_{2i} \in \{m^{1}_{2i}, \dots, m^{K}_{2i}\}$, there is at least one solutions of \eqref{eq-3.1.1b} which oscillates
$s_i$ times over $I_i$.
\end{theorem}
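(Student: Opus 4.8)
The plan is to verify that system \eqref{eq-3.1.1b} fits the framework of slowly varying Hamiltonian systems treated in \cite{GKMO-00, KoMiOk-96} and then to quote their Conley-index result. The substantive work is the verification of the frozen-system hypotheses and the layout of the intervals $I_i$; the smallness of $\varepsilon$ and the robustness of the conclusion will be delivered for free by the continuation property of the Conley index.

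First I would record the frozen picture. For each fixed $\theta$ the assumption $p(\theta) > 0$ places system \eqref{eq-3.1.1a} exactly in the case $k = p(\theta) > 0$ analysed in Section~\ref{section-2}: by Proposition~\ref{prop-2.2} there is a center $A(\theta) = (x_s(\theta),0)$ and a hyperbolic saddle $B(\theta) = (x_u(\theta),0)$, the homoclinic loop $\mathcal{H}$ encloses $A(\theta)$, and the bounded region inside $\mathcal{H}$ is foliated by the periodic orbits $\mathcal{O}_\rho$ with $\Phi_{p(\theta)}(x_s) < \rho < \Phi_{p(\theta)}(x_u)$. Since $f\in C^r$ and $p\in C^2$, these objects depend smoothly on $\theta$, so $\mathscr{A}$ is a genuine slow manifold of centers and the companion curve of saddles is well defined; this is precisely the geometric input required by the abstract theorems. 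It is worth stressing that $p > 0$ is used here in an essential way: it is what guarantees the saddle--center configuration for \emph{every} value of the slow parameter.

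Next I would quantify the oscillation count through the time-map \eqref{tau-o}. The minimal period of $\mathcal{O}_\rho$ is $\mathcal{T}(\rho,\theta) := 2\,\tau_{\scriptscriptstyle{\mathcal{O}}}(\rho;\, x_+(\rho))$; as $\rho$ decreases to $\Phi_{p(\theta)}(x_s)$ it tends to the linearized period $2\pi/\sqrt{f'(x_s(\theta))}$, whereas as $\rho$ increases to $\Phi_{p(\theta)}(x_u)$ it diverges to $+\infty$. Thus $\mathcal{T}(\cdot,\theta)$ sweeps out a whole half-line of periods. Since a solution of \eqref{eq-3.1.1b} crosses a $\theta$-window $I_i$ in time $|I_i|/\varepsilon$, the number of windings around $\mathscr{A}$ accumulated over $I_i$ is, to leading order, $|I_i|/(\varepsilon\,\mathcal{T})$ (the action $\frac{1}{2\pi}\oint y\,dx$ being an adiabatic invariant that keeps the fast orbit on a slowly drifting level). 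Selecting different admissible orbits therefore yields many distinct integer winding numbers, and the set of available values grows without bound as $\varepsilon \to 0^+$. This is the mechanism producing \emph{at least} $K$ admissible oscillation numbers $m^1_i,\dots,m^K_i$ on the interior intervals once $\varepsilon$ is small enough, whereas the complementary transition intervals — placed near the values of $\theta$ where the orbit must pass close to the separatrix — carry only a topologically forced pair $\{m'_i, m''_i\}$. Because $p$ is periodic in $\theta$, the whole pattern repeats, so the intervals $I_i$ can be laid out bi-infinitely along the $\theta$-axis in the alternating even/odd fashion required, and a solution with $\theta(t) = \theta(0) + \varepsilon t$ meets infinitely many of them.

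With this data in place I would invoke the machinery of \cite{GKMO-00, KoMiOk-96}. One builds, in the extended space ${\mathbb R}^3\setminus\mathscr{A}$, isolating neighborhoods indexed by the prescribed winding numbers — the homotopy-class bookkeeping of the ``oscillate $k$ times'' definition is exactly the device that labels them — and computes their Conley indices in the singular limit $\varepsilon = 0$, where they reduce to the indices of the frozen periodic orbits and of the saddle/homoclinic block. A connection-matrix (or attractor--repeller) argument then shows that every admissible itinerary $(s_i)_{i\in{\mathbb Z}}$ is realised by a genuine connecting orbit of \eqref{eq-3.1.1b} threading the corresponding neighborhoods in order, and the continuation property of the index guarantees that this persists for all $0 < \varepsilon < \bar{\varepsilon}$. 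I expect the main obstacle to be neither the smallness of $\varepsilon$ nor the robustness — both are automatic — but rather the bookkeeping that turns the abstract hypotheses of \cite{GKMO-00} into checkable statements about our $f$ and $p$: controlling the monotone dependence of the homoclinic loop on $\theta$ (in the spirit of Proposition~\ref{prop-2.3}), securing the separatrix-crossing transversality that singles out the transition intervals, and carrying out the index computations that yield precisely ``at least two'' on the odd intervals and ``at least $K$'' on the even ones.
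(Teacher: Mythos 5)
Your overall route coincides with the paper's: both treat \eqref{eq-3.1.1} as a slowly varying planar Hamiltonian system, check that for each frozen $\theta$ the positivity of $p(\theta)$ yields the saddle--center--homoclinic configuration of Section~\ref{section-2}, and then invoke the Conley-index machinery of \cite{GKMO-00,KoMiOk-96}. The genuine gap is that you never verify the one hypothesis of \cite[Cor.~1.2]{GKMO-00} that actually requires work here: the concrete criterion by which the intervals $I_i$ are selected. In \cite{GKMO-00} that criterion is formulated through the \emph{area function}: letting $S(\theta)$ denote the area of the planar region containing $A(\theta)$ and bounded by the homoclinic orbit of the frozen system \eqref{eq-3.1.1a}, one must choose the intervals so that $S'(\theta_i^-)>0>S'(\theta_i^+)$ for $i$ odd and $S'(\theta_i^-)<0<S'(\theta_i^+)$ for $i$ even (an interior local maximum of $S$ on the odd intervals, an interior local minimum on the even ones). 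This is exactly where the standing assumptions on $p$ are consumed: the homoclinic loop, and hence its enclosed area, depends monotonically on the value of the frozen forcing constant (in the spirit of Proposition~\ref{prop-2.3}), so $S(\theta)$ is a non-constant, periodic, continuously differentiable function of $\theta$; therefore $S'$ takes both signs and intervals with the prescribed alternating sign pattern exist. Your proposal replaces this checkable condition with the vague prescription of placing ``transition intervals near the values of $\theta$ where the orbit must pass close to the separatrix'', and indeed you yourself list ``securing the separatrix-crossing transversality that singles out the transition intervals'' and ``the index computations'' as unresolved obstacles --- but that selection rule \emph{is} the content of the proof, not a leftover technicality; without it the citation of \cite{GKMO-00} does not apply.

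Two smaller points. First, your adiabatic-invariant/winding-number count is a reasonable heuristic for why many oscillation numbers become available as $\varepsilon\to 0^+$, but it is not among the hypotheses one must check, and it cannot substitute for the area condition. Second, you correctly sense that periodicity of $p$ lets the pattern of intervals repeat bi-infinitely; the precise role of this remark in the paper is that the non-periodic version of the result in \cite{GKMO-00} requires the gaps $\theta_{i+1}^- - \theta_i^+$ to be uniformly bounded away from zero, a hypothesis that becomes automatic once the intervals are chosen $T$-periodically, i.e.\ $\theta^{\pm}_{i+2}=\theta^{\pm}_{i}+T$ for all $i\in\mathbb{Z}$.
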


\begin{proof}
The result follows from \cite[Cor.~1.2]{GKMO-00}, by observing that \eqref{eq-3.1.1} is a periodically perturbed
planar Hamiltonian system of the form $z' = J\nabla H(z,\varepsilon t)$, where $J$ is the $2\times 2$ symplectic matrix.
Without entering into discussion of technical details, we just give a list of the key points that make the setting of \cite{GKMO-00, KoMiOk-96}
applicable to our case.
We denote by $S(\theta)$ the area of the planar region containing the elliptic equilibrium point $A(\theta)$ and
bounded by the homoclinc orbit of \eqref{eq-3.1.1a} enclosing it. Then, the intervals $I_i$ are chosen
so that $S'(\theta_{i}^-) > 0 > S'(\theta_{i}^+)$ for $i$ odd and $S'(\theta_{i}^-) < 0 < S'(\theta_{i}^+)$ for $i$ even.
As a final remark, note that the method in \cite{GKMO-00} applies also when the forcing terms are not necessarily periodic
and, in this case, it is required the additional condition that $\theta^{-}_{i+1} - \theta^{+}_i$ is uniformly bounded away from zero.
However, in our situation, $p(t)$ is a nonconstant periodic function and, by denoting its fundamental period by $T$,
we can choose the intervals $I_{i}$ such that $\theta^{\pm}_{i+2} = \theta^{\pm}_{i} + T$ for all $i\in {\mathbb Z}$,
without any further hypothesis.
\end{proof}

It goes without saying that one of the main features of Theorem~\ref{th-3.4} regards the periodic perturbation $p(t)$ which is no longer required to be small,
as shown in Fig.~\ref{fig-2}.

\begin{figure}[htb]
          \centering
          \includegraphics[width=0.7\textwidth]{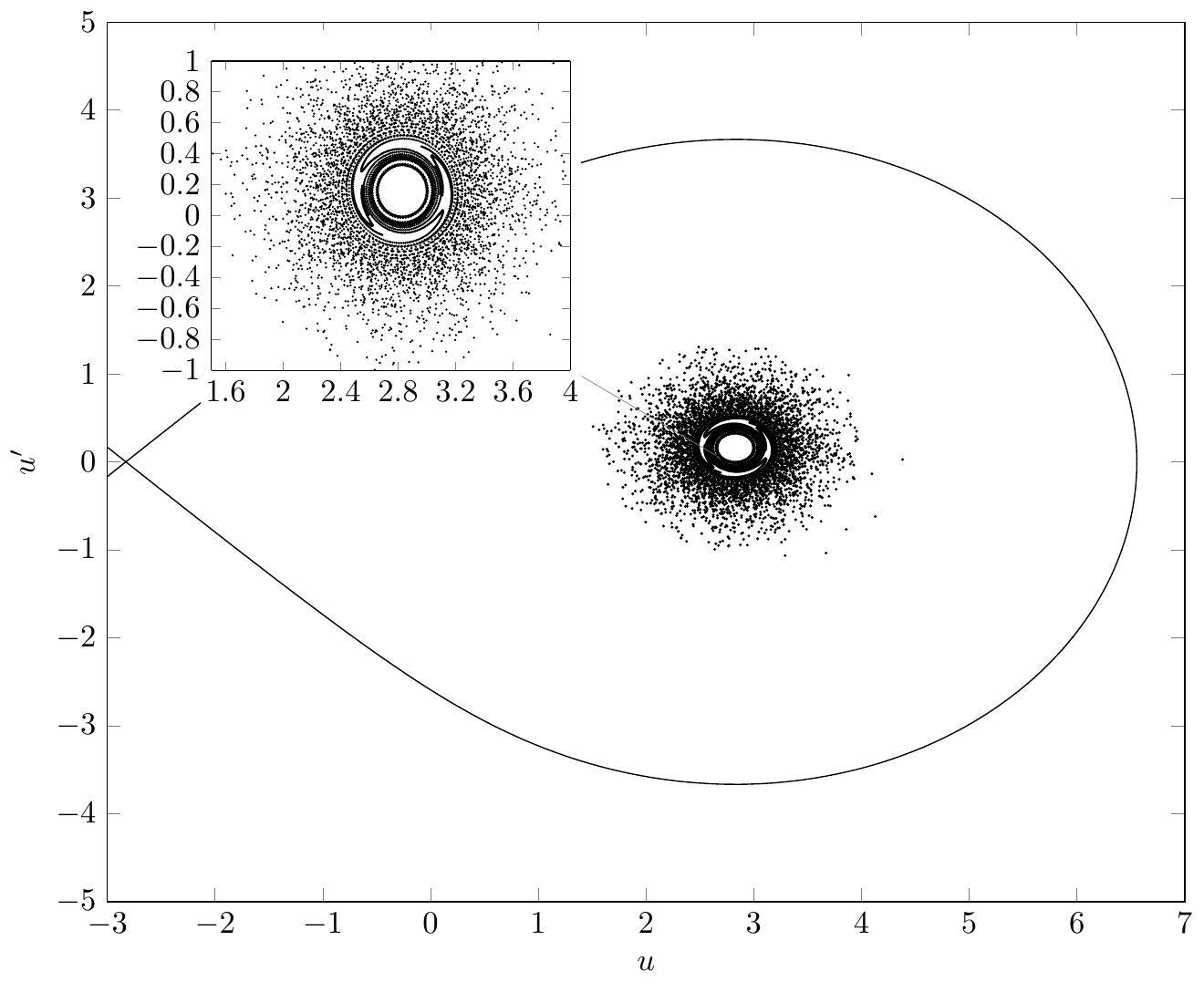}
\caption{Evolution of $u''+\sqrt{1+u^{2}} -1= 2+\varepsilon\sin(\omega t)$
in terms of the iterates of the Poincar\'e map, with $\varepsilon=1.5$, $\omega=0.1$
and varying 500 initial conditions $(u(0),u'(0))$ where $u(0)$ is within the interval $[-4,6]$ and $u'(0)=0$.
The continuous line denotes the homoclinic orbit $\mathcal{H}(x_{u})$ of the associated autonomous system \eqref{eq-2.2}.
Again the typical alternation of zones of stability and instability/randomness appears in a smaller region due to the fact that
only few initial conditions give rise to solutions bounded in the future.}
\label{fig-2}
\end{figure}

At last, as we mentioned above, the result achieved in \cite{GKMO-00} is stable under small perturbations.
This means that, in such a framework, Theorem~\ref{th-3.4} applies to system
\begin{equation*}
\begin{cases}
x'=y,\\
y'=- \varepsilon^2 c_0 y -f(x)+p(\varepsilon t),
\end{cases}
\end{equation*}
with $c_0 \in {\mathbb R}.$

Finally, we should recall some related theorems about the existence of multiple multi-bump homoclinics
which are based on an abstract variational perturbative approach, developed by Ambrosetti and Badiale in \cite{AmBa-98}
(see also \cite{AmBe-98, BeBo-99}). These theorems have found a natural application to
differential systems which are perturbations of autonomous
Schr\"{o}dinger type equations of the form $u'' - u + \nabla V (u) = 0,$ where, roughly $V(u) \simeq |u|^m$ for $ m > 2$
(cf.~\cite{Am-98}). Note that, in the one-dimensional case, the phase-plane geometry of these autonomous equations
is qualitatively similar to the one of \eqref{eq-2.1}.

\medskip\noindent
To conclude this section, in order to fix the ideas, let's recapitulate the relevant cases which can be tackled using Melnikov type results.
As a model to compare Corollary~\ref{cor-3.1}, Corollary~\ref{cor-3.2} and Theorem~\ref{th-3.4}, we take the system
\begin{equation}\label{eq-3.1.2}
\begin{cases}
x'=y,\\
y'=-f(x)+k + \varepsilon p_0(\omega t),
\end{cases}
\end{equation}
where we suppose that $k > 0$, $f$ is a $C^2$-function
satisfying $(H1)$ and $p_0(t)$ is a nonconstant $2\pi$-periodic function.
Thus the list of the consequence, that have been achieved by now, is the following.

\begin{itemize}
\item{Corollary~\ref{cor-3.1}: for a special form of $p_0(t)$ such as $p_0(t) = \cos(t + \vartheta_0)$, $\omega > 0$ arbitrary and $\varepsilon > 0$
small, we have found chaos in the sense of the Smale horseshoe for a suitable iterate of the Poincar\'{e} map $\Psi$ associated with system \eqref{eq-3.1.2}.}
\item{Corollary~\ref{cor-3.2}: for $p_0(t)$ an arbitrary function, $\omega > 0$ small and $\varepsilon > 0$
small (depending on the choice of $\omega$), we have found chaos in the sense of the topological horseshoe
for a suitable iterate of the Poincar\'{e} map $\Psi$ associated with system \eqref{eq-3.1.2} relatively to the period interval $[0,T]$ with $T:= 2\pi/\omega.$}
\item{Theorem~\ref{th-3.4}: for $p_0(t)$ an arbitrary function, $\omega > 0$ small and $|\varepsilon| < k$
(hence also a large forcing term with range in $]0,+\infty[$ is allowed), we have found symbolic dynamics for the Poincar\'{e} map $\Psi$
associated with system \eqref{eq-3.1.2} relatively to the period interval $[0,T]$ with $T:= 2\pi/\omega.$}
\end{itemize}
The last two results consider the case of the so-called slowly varying perturbations. Typically, these results guarantee the
presence of complex dynamics when a parameter tends to infinity (like the period $T$) or tends to zero (like the
coefficient $\omega$ in $p_0(\omega t)$).
The succeeding step is to provide specific bounds (for instance, lower bounds for the period)
in terms of the associated autonomous system. This will be carried out in the next section.

\subsubsection{Switched systems}\label{section-3.2}
Let us consider now a periodic piecewise constant forcing term which takes two values as follows
\begin{equation}\label{eq-3.2.1}
p_{k_1,k_2}(t):=\begin{cases}
k_{1}&\text{for }t\in[0,t_{1}[,\\
k_{2}&\text{for }t\in[t_{1},t_{1}+t_{2}[,
\end{cases}
\end{equation}
with $k_1, k_2 \geq 0$, $k_1\neq k_2$ and $t_{1},t_{2}>0$. We will perform our analysis by assuming
\begin{equation*}
0 < k_1 < k_2
\end{equation*}
and only briefly sketch how to deal when $k_1 = 0.$ We also suppose that the fundamental period of $p(t)$ splits as
\begin{equation*}
T:= t_1 + t_2.
\end{equation*}
In this setting, system $(\mathscr{S}_{1})$ is equivalent to the switched system in the phase plane $(x,y)$ which alternates between two subsystems:
\[\begin{cases}
x'=y,\\
y'=-f(x)+k_{i},
\end{cases}
\leqno{(\mathrm{S}_{i})}\]
for $i\in\{1,2\}$.
In other words, the solution to $(\mathscr{S}_{1})$ which starts from an initial point $z_{0}=(x_{0},y_{0})$
is governed by the subsystem $(\mathrm{S}_{1})$ for a fixed period of time $t_{1}$
and then it is governed by the subsystem $(\mathrm{S}_{2})$ for another fixed period of time $t_{2}$.
At this point, the switched system may change back to subsystem $(\mathrm{S}_{1})$ until the time elapsed is exactly $t_{1}+t_{2}$.
As a consequence, the Poincar\'e map $\Psi$ of system $(\mathscr{S}_{1})$ can be decomposed as $\Psi=\Psi_{2}\circ\Psi_{1}$,
where $\Psi_{i}$ is the Poincar\'e map of system $(\mathrm{S}_{i})$ relatively to the time interval $[0,t_{i}]$, for $i\in\{1,2\}$.

This particular choice of the forcing term is convenient to take advantage of the SAP method that
is collected in the Appendix (see also \cite{MPZ-09,PaZa-04} for the details).
Furthermore, switched systems are themselves an attractive topic in the field of control theory (see \cite{Ba-14}).
Therefore, by considering switching systems, we are looking for a geometry similar to the one of the ``linked twist maps'' (LTM)
(see \cite{PaZa-09,WiOt-04}). Specifically, the configuration of the problem we are analyzing recalls the work \cite{PPZ-08},
where the interplay between an annulus and a strip, instead of the usual two annuli, was discussed. With these remarks in mind, we are ready to prove the following result.

\begin{theorem}\label{th-3.5}
Assume $(H_{0})$ and let also $p: {\mathbb R} \to {\mathbb R}$ be a $T$-periodic stepwise function, such that $p(t)>0$ for all $t\in {\mathbb R}.$
Then, there exist $\tau^{*}_{1}$ and $\tau^{*}_{2}$ such that a topological horseshoe occurs
for the Poincar\'e map associated with system $(\mathscr{S}_{1})$ provided that $t_{1}>\tau^{*}_{1}$ and $t_{2}>\tau^{*}_{2}$.
\end{theorem}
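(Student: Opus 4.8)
The plan is to read $(\mathscr{S}_1)$ as the switched system and exploit the decomposition $\Psi = \Psi_2\circ\Psi_1$ together with the \emph{stretching along the paths} machinery recalled in the Appendix, so as to realize a linked--twist--map configuration of the ``annulus and strip'' type as in \cite{PPZ-08}. First I would record the phase portraits of the two frozen subsystems $(\mathrm{S}_1)$ and $(\mathrm{S}_2)$. By the analysis of Section~\ref{section-2}, each $(\mathrm{S}_i)$ possesses a center $A_i=(x_s(k_i),0)$ surrounded by the one--parameter family of closed orbits $\mathcal{O}_\rho$ filling the interior of the homoclinic loop of the saddle $B_i=(x_u(k_i),0)$. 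Proposition~\ref{prop-2.3} together with the monotonicity of $f^{-1}_r$ gives $x_u(k_2)<x_u(k_1)<x_s(k_1)<x_s(k_2)$ and $x_h(k_1)<x_h(k_2)$, so that $A_1$ lies in the interior of the loop of $(\mathrm{S}_2)$; hence the annular region $\mathcal{A}_1$ of $(\mathrm{S}_1)$--periodic orbits and the region foliated by $(\mathrm{S}_2)$--periodic orbits genuinely overlap in a neighbourhood of $A_1$. The crucial transversality is immediate: the two foliations are the level sets of $E_{k_1}$ and $E_{k_2}$, with gradients $(f(x)-k_1,y)$ and $(f(x)-k_2,y)$, whose cross product equals $y(k_2-k_1)$; since $k_1\neq k_2$ they meet transversally at every point with $y\neq 0$, the only tangency locus being the $x$--axis.

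The second ingredient is a twist estimate, which is where the thresholds $\tau_1^*,\tau_2^*$ enter. For each subsystem the fundamental period of the closed orbit $\mathcal{O}_\rho$ is $2\,\tau_{\scriptscriptstyle{\mathcal{O}}}(\rho;\,x_+(\rho))$, a continuous function of $\rho$ on $]\Phi_{k_i}(x_s),\Phi_{k_i}(x_u)[$. It is bounded on any sub-annulus staying away from the loop and, by the usual logarithmic divergence near a hyperbolic saddle, it tends to $+\infty$ as $\rho\to\Phi_{k_i}(x_u(k_i))^-$. Consequently the number of revolutions performed around $A_i$ by the time--$t_i$ map $\Psi_i$ decreases from a large value on the inner orbits to an arbitrarily small value on orbits sufficiently close to the loop, so that the total variation of this rotation across a fixed sub-annulus grows linearly in $t_i$ and diverges as $t_i\to+\infty$. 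I would fix the desired number of symbols $m\ge 2$ and then choose $\tau_i^*$ so large that for $t_i>\tau_i^*$ the rotation of $\Psi_i$ gains at least $m$ full turns across the chosen sub-annulus.

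With these two facts in hand I would carry out the SAP construction off the $x$--axis, where transversality holds. Selecting a compact sub-annulus $\mathcal{A}_1'\subset\mathcal{A}_1$ inside the overlap, I would build two oriented rectangles, one in the upper and one in the lower half--plane, whose $[\cdot]^-$ sides run along, respectively, arcs of $(\mathrm{S}_1)$-- and of $(\mathrm{S}_2)$--orbits, so that a path crossing a rectangle ``radially'' for one foliation is carried by the corresponding twist map across the transverse foliation. The twist estimate then guarantees that $\Psi_1$ stretches the first rectangle over itself at least $m$ times in the $A_1$--angular direction, while $\Psi_2$ does the same in the $A_2$--angular direction; feeding these stretching relations into the theorems of the Appendix (cf.~\cite{MPZ-09,PaZa-04}) produces a compact invariant set for $\Psi=\Psi_2\circ\Psi_1$ on which $\Psi$ is semiconjugate to the shift on $m$ symbols, together with the matching of periodic sequences to periodic points, which is exactly the notion of topological horseshoe fixed in the Introduction.

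The hard part will be the geometric bookkeeping of this last step rather than any single estimate. Because the centers $A_1$ and $A_2$ are distinct and the loop of $(\mathrm{S}_1)$ need not contain $A_2$, the configuration is genuinely of ``annulus and strip'' type: the two angular coordinates (around $A_1$ and around $A_2$) do not coincide, and one must choose the oriented rectangles so that each twist map stretches in the direction \emph{transverse} to the orbits of the other subsystem while keeping the crossing multiplicity at least two. Care is also needed to keep the rectangles clear of the $x$--axis tangency set without losing the wrapping provided by the twist, and to ensure that the relevant arcs remain inside the loops throughout $[0,t_1]$ and $[0,t_2]$ so that $\Psi_1,\Psi_2$ are defined there; this last point is automatic, since orbits interior to a loop are closed and bounded.
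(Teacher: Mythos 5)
Your proposal takes a genuinely different route from the paper, and the difference is not cosmetic: it is precisely where the argument breaks. The paper never uses the periodic orbits of $(\mathrm{S}_{1})$ at all. Its two linked regions are (i) the annulus $\mathcal{A}$ of $(\mathrm{S}_{2})$ whose \emph{outer boundary is the homoclinic loop} $\mathcal{H}(x_{u}(k_{2}))$ itself, and (ii) an unbounded ``strip with a hole'' $\mathcal{S}$ built from the non-periodic level lines $\mathcal{V}_{A}$, $\mathcal{U}_{B}$ of $E_{k_{1}}$ lying \emph{outside} $\mathcal{H}(x_{u}(k_{1}))$; both stretching properties (Step I and Step II) are obtained by contrasting finite transit times, computed via the time maps in \eqref{eq-tau1}--\eqref{eq-tau2}, with the \emph{infinite} transit time along $W^{s}(x_{u}(k_{1}))$ and along $\mathcal{H}(x_{u}(k_{2}))$. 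You instead propose a classical two-annuli linked twist map made of closed orbits of both frozen systems, with the twist supplied by the variation of the period functions.

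The gap is in the twist for the $(\mathrm{S}_{2})$-annulus, and it is created by the very linking you defer as ``bookkeeping''. For the two annuli to intersect in two rectangles, the rectangles, hence the $(\mathrm{S}_{2})$-orbits carrying two of their sides, must lie inside the $(\mathrm{S}_{1})$-annulus, hence inside the interior of $\mathcal{H}(x_{u}(k_{1}))$. But an $(\mathrm{S}_{2})$-orbit meeting the interior of that loop has energy below $\Phi_{k_{2}}(x_{u}(k_{1}))$, which is strictly smaller than the loop energy $\Phi_{k_{2}}(x_{u}(k_{2}))$; equivalently, every $(\mathrm{S}_{2})$-orbit sufficiently close to $\mathcal{H}(x_{u}(k_{2}))$ encloses $\mathcal{H}(x_{u}(k_{1}))$ entirely and is useless for linking. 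So the period divergence near the saddle — your only source of twist — is out of reach for $(\mathrm{S}_{2})$: you need the period function of $(\mathrm{S}_{2})$ to be non-constant on a family of orbits \emph{bounded away from its loop}, and $(H_{0})$ does not grant this. Isochronous families are perfectly compatible with $(H_{0})$: the paper's own Example notes that for $u''+|u|=2$ all orbits in the right half-plane have period $2\pi$, and one can arrange $f$ and $0<k_{1}<k_{2}$ so that \emph{all} $(\mathrm{S}_{2})$-orbits entering the interior of $\mathcal{H}(x_{u}(k_{1}))$ are isochronous. In that case, whenever $t_{2}$ is a multiple of the common period, $\Psi_{2}$ is the identity on your annulus, so $\Psi_{2}(\mathcal{N})=\mathcal{N}$ never crosses $\mathcal{M}$ and no threshold $\tau^{*}_{2}$ valid for all $t_{2}>\tau^{*}_{2}$ can come out of your construction. (A minor related point: under $(H_{0})$ the saddle need not be hyperbolic, so the ``logarithmic divergence'' is not available as stated, though divergence of the period at the loop does follow from uniqueness of solutions.) This is exactly the failure mode the paper's geometry is designed to evade: by placing $\mathcal{N}^{-}_{r}$ on $\mathcal{H}(x_{u}(k_{2}))$, where points never complete a quarter turn, against $\mathcal{N}^{-}_{l}\subseteq\mathcal{O}_{D}$, which winds with period $\mathcal{T}_{\scriptscriptstyle\mathcal{O}_{D}}$, the relative winding grows without bound for \emph{every} $f$ satisfying $(H_{0})$, and Step I similarly leans on $W^{s}(x_{u}(k_{1}))$ rather than on any twist of closed $(\mathrm{S}_{1})$-orbits. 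To salvage your approach you would need either an additional non-isochronicity hypothesis or to replace (at least) the $(\mathrm{S}_{2})$-twist by the paper's homoclinic anchoring, at which point you are back to the proof of Theorem~\ref{th-3.5}.
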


Notice that $\tau^{*}_{1}$ and $\tau^{*}_{2}$ will be explicitly computed in terms of the forcing term $p(t)$ (cf.~\eqref{eq-tau1} and \eqref{eq-tau2}).

\begin{proof}
Consider two fixed values $k_{1},k_{2}$ and let $p(t)= p_{k_1,k_2}(t)$ be defined as in \eqref{eq-3.2.1}. In order to work with the SAP method, namely Theorem~\ref{th-app1},
we have to find two oriented topological rectangles $\tilde{\mathcal{M}}$ and $\tilde{\mathcal{N}}$ where chaotic dynamics take place.
Then, the analysis will be performed according to these steps which collect the stretching properties.

\begin{description}
\item[Step I] For any path $\gamma$ contained in $\mathcal{M}$, connecting the two sides $\mathcal{M}^{-}_{l}$ and $\mathcal{M}^{-}_{r}$,
there exist two sub-paths $\gamma_{0},\gamma_{1}$ such that $\Psi_{1}(\gamma_{i})$ is a path contained in $\mathcal{N}$ which joins
the two sides $\mathcal{N}^{-}_{l}$ and $\mathcal{N}^{-}_{r}$ for each $i\in\{0,1\}$.
\item[Step II] For any path $\gamma$ contained in $\mathcal{N}$, connecting the two sides $\mathcal{N}^{-}_{l}$ and $\mathcal{N}^{-}_{r}$,
there exist $m\geq 2$ sub-paths $\gamma_{0},\dots,\gamma_{m-1}$ such that $\Psi_{2}(\gamma_{i})$ is a path contained in $\mathcal{M}$
which joins the two sides $\mathcal{M}^{-}_{l}$ and $\mathcal{M}^{-}_{r}$ for each $i\in\{0,\dots,m-1\}$.
\end{description}

\medskip\noindent
We start by giving a suitable construction of these topological oriented rectangles. From Section~\ref{section-2}
follows the existence of two homoclinc orbits $\mathcal{H}(x_{u}(k_{1}))$ and $\mathcal{H}(x_{u}(k_{2}))$,
one for system $(\mathrm{S}_1)$ and one for $(\mathrm{S}_2)$, associated with the energies $\Phi_{k_{1}}(x_{u}(k_{1}))$ and $\Phi_{k_{2}}(x_{u}(k_{2}))$, respectively.
Moreover, Proposition~\ref{prop-2.3} leads to
\[x_{u}(k_{2})<x_{u}(k_{1})<x_{h}(k_{1})<x_{h}(k_{2}),
\]
which is equivalent to said that the region bounded by the homoclinic orbit $\mathcal{H}(x_{u}(k_{2}))$ contains the homoclinc orbit $\mathcal{H}(x_{u}(k_{1}))$.

Let us fix three main energy levels $A,B,D\in\mathbb{R}$ as follows.
Take $A<\Phi_{k_{1}}(x_{u}(k_{1}))$ such that the solution $a:=x_{*}(A)$ of the equation $\Phi_{k_{1}}(x)=A$ belongs to $]x_{u}(k_{2}),x_{u}(k_{1})[$.
Choose $\Phi_{k_{2}}(x_{s}(k_{2}))<D<\Phi_{k_{2}}(x_{u}(k_{2}))$ in a way that the solutions $d:=x_{-}(D)$
and $x_{+}(D)$ of the equation $\Phi_{k_{2}}(x)=D$ are such that $x_{u}(k_{2})<d<a$ and $x_{s}(k_{2})<x_{+}(D)<x_{h}(k_{2})$.
At last, consider $B>\Phi_{k_{1}}(x_{u}(k_{1}))$ so that the solution $b:=x^{*}(B)$ of $\Phi_{k_{1}}(x)=B$ is such that $x_{h}(k_{1})<b<x^{+}(D).$
In this way, one can determine three different energy level lines which are $\mathcal{V}_{A}$, $\mathcal{U}_{B}$
for system $(\mathrm{S}_1)$ and $\mathcal{O}_{D}$ for $(\mathrm{S}_2)$, defined
as in \eqref{eq-2.5v},
\eqref{eq-2.5u} and \eqref{eq-2.5o}, respectively. Now, we consider the closed regions
\[\begin{split}
&\mathcal{S}_{A}:= \{(x,y)\in\mathbb{R}^{2} : A\leq E_{k_{1}}(x,y)\leq \Phi_{k_{1}}(x_{u}(k_{1})), \, x\leq x_{u}(k_{1}) \},\\
&\mathcal{S}_{B}:= \{(x,y)\in\mathbb{R}^{2} : \Phi_{k_{1}}(x_{u}(k_{1}))\leq E_{k_{1}}(x,y)\leq B \},
\end{split}\]
and their union
\[
\mathcal{S}:= \mathcal{S}_{A} \cup \mathcal{S}_{B}\,.
\]
They are all invariant for the flow associated with system $(\mathrm{S}_1).$ The region $\mathcal{S}$
is topologically like a strip with a hole given by the part of the plane enclosed by the homoclinic trajectory
$\mathcal{H}(x_{u}(k_{1}))$. We also introduce a closed and invariant annular region for system $(\mathrm{S}_2)$, given by
\[
\mathcal{A}:= \{(x,y)\in\mathbb{R}^{2}: D\leq E_{k_{2}}(x,y)\leq \Phi_{k_{2}}(x_{u}(k_{2}))\}.
\]
The intersection of $\mathcal{S}$ with $\mathcal{A}$ determines two disjoint compact sets
that we call $\mathcal{M}$ (the one in the upper half-plane)
and $\mathcal{N}$ (the other symmetric one in the lower half-plane), that are
\[
\begin{split}
&\mathcal{M}:= \mathcal{A}\cap \mathcal{S}\cap \{(x,y)\in\mathbb{R}^{2}: y>0   \}, \\
&\mathcal{N}:= \mathcal{A}\cap \mathcal{S}\cap \{(x,y)\in\mathbb{R}^{2}: y<0   \}.
\end{split}
\]
One can easily check that they are topological rectangles. At last, we give an orientation as follows
\begin{align*}
\mathcal{M}^-_{l}&:= \mathcal{M}\cap \mathcal{V}_{A}\,, &  \mathcal{M}^-_{r}&:= \mathcal{M}\cap \mathcal{U}_{B}, \\
\mathcal{N}^-_{l}&:= \mathcal{N}\cap \mathcal{O}_{D}\,,  &  \mathcal{N}^-_{r}&:= \mathcal{N}\cap \mathcal{H}(x_{u}(k_{2})).
\end{align*}
See Fig.~\ref{fig-3} for a graphical sketch of $\tilde{\mathcal{M}}$ and $\tilde{\mathcal{N}}$.

\begin{figure}[htb]
          \centering
          \includegraphics[width=1\textwidth]{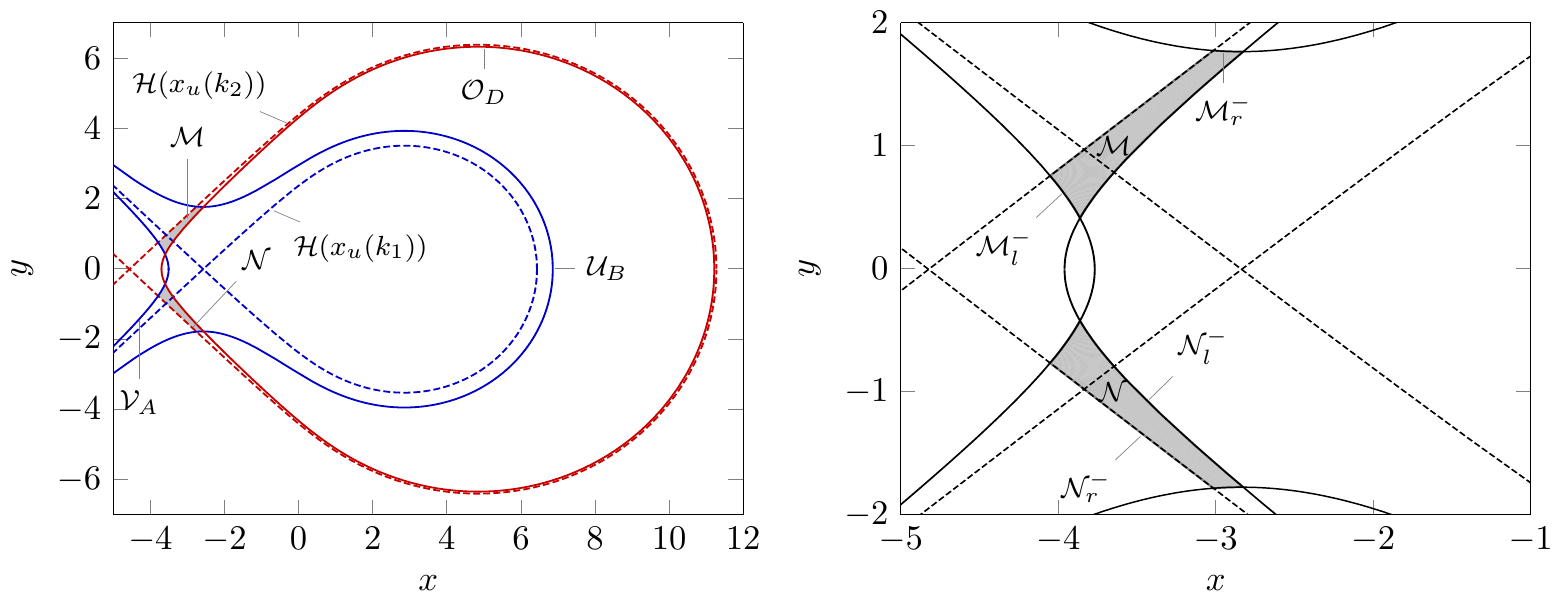}
          \caption{\small Left: Link of an annulus with a ``strip with hole''. Energy level lines for  system \eqref{eq-2.2} where $f(u)=\sqrt{1+x^{2}}-1$ are displayed with $k=2$ (blue) and $k=4$ (red). Right: Zooming of the topological rectangles with evidence of the boundaries.}
\label{fig-3}
\end{figure}

We are now in position to prove Step~I. Let us consider in this case system~$(\mathrm{S}_{1})$.
Then, thanks to the analysis performed in Section~\ref{section-2},
we known the time needed to move from the point $\big(x_{u}(k_{2}),\sqrt{2(A-\Phi_{k_{1}}(x_{u}(k_{2})))}\big)$
to the point $\big(x_{u}(k_{2}),-\sqrt{2(A-\Phi_{k_{1}}(x_{u}(k_{2})))}\big)$ along $\mathcal{V}_{A}$.
This is, in accord with \eqref{tau-v},
\[\tau_{\scriptscriptstyle\mathcal{V}_{A}}:=\tau_{\scriptscriptstyle\mathcal{V}}(A;\, x_{u}(k_{2})).\]
Instead, from \eqref{tau-u}, the move from the point $\big(x_{u}(k_{2}),\sqrt{2(B-\Phi_{k_{1}}(x_{u}(k_{2})))}\big)$
to the point $\big(x_{u}(k_{2}),-\sqrt{2(B-\Phi_{k_{1}}(x_{u}(k_{2})))}\big)$ along $\mathcal{U}_{B}$ requires the following time
\[\tau_{\scriptscriptstyle \mathcal{U}_{B}}:=\tau_{\scriptscriptstyle \mathcal{U}}(B;\, x_{u}(k_{2})).\]
As a result of these computations, we fix
\begin{equation}\label{eq-tau1}
\tau_{1}^{*}:=\max\{\tau_{\scriptscriptstyle\mathcal{V}_{A}},\tau_{\scriptscriptstyle\mathcal{U}_{B}}\}.
\end{equation}

Note that each solution of a Cauchy problem with initial conditions taken in
$\mathcal{M}$ evolves, through the action of $(\mathrm{S}_{1})$, inside the invariant region $\mathcal{S}$.
More in detail, at any time $t_1 > \tau_{\scriptscriptstyle\mathcal{V}_{A}}$, all the initial points in $\mathcal{M}^-_{l}$ will be moved,
along the level line $\mathcal{V}_{A}$, to points with $x < x_{u}(k_{2})$ and $y < 0$ by the action of $\Psi_1$.
Any solution $u(t)$ of $u'' + f(u) = k_1$ with $(u(0),u'(0))\in \mathcal{M}^-_{l}$
starts with $u(0) >  x_{u}(k_{2})$ and a positive slope, it is strictly increasing until it reaches its maximum value
$u_{\max} = a$ and then it decreases strictly till to the value $u(t_{1}) < x_{u}(k_{2})$. Moreover, $u'(t)$ is strictly
decreasing on the whole interval $[0,t_{1}].$ Similarly, for $t_{1} > \tau_{\scriptscriptstyle\mathcal{U}_{B}}$,
all the initial points in $\mathcal{M}^{-}_{r}$ will be moved away along the level line $\mathcal{U}_{B}$. The final points
will be such that $x < x_{u}(k_{2})$ and $y < 0.$ Analogous considerations can be made for the solution
$u(t)$ of $u'' + f(u) = k_1$ with $(u(0),u'(0))\in \mathcal{M}^{-}_{r}$ which achieves the maximum value
$u_{\max} = b$. In the region $\mathcal{M}$, any path connecting $\mathcal{M}^-_{l}$ to $\mathcal{M}^-_{r}$
must intersect the stable manifold $W^{s}(x_{u}(k_{1})).$ Notice that any solution $(x(t),y(t))$
of system $(\mathrm{S}_1)$ starting at a point of $W^{s}(x_{u}(k_{1}))$, lies on such a manifold and, therefore,
$y(t) > 0$ for all $t\geq 0.$

Let $\gamma:[0,1]\to\mathcal{M}$ be a continuous path with $\gamma(0)\in\mathcal{M}^{-}_{l}\subseteq \mathcal{V}_{A}$ and
$\gamma(1)\in\mathcal{M}^{-}_{r}\subseteq \mathcal{U}_{B}$.
First of all, observe that, by the continuity of $\gamma$ there exists $\bar{s}^{\flat},\bar{s}^{\sharp} \in]0,1[$
with $\bar{s}^{\flat} \leq \bar{s}^{\sharp}$
such that $\gamma(\bar{s}^{\flat}), \gamma(\bar{s}^{\sharp})\in W^{s}(x_{u}(k_{1}))$ and $\gamma(s) \in \mathcal{S}_{A}$ for all $0\leq s \leq \bar{s}^{\flat},$
as well as
$\gamma(s) \in \mathcal{S}_{B}$ for all $\bar{s}^{\sharp}\leq s \leq 1$.
By the choice of $\tau_{1}^{*}$, for each $t_{1}>\tau_{1}^{*}$ it
follows that
\[\begin{split}
&\Psi_{1}(\gamma(0)),\, \Psi_{1}(\gamma(1)) \in\{(x,y): x < x_{u}(k_{2}))),\, y < 0\},\\
&\Psi_{1}(\gamma(\bar{s}^{\flat})),\, \Psi_{1}(\gamma(\bar{s}^{\sharp})) \in\{(x,y): x > x_{u}(k_{2}))),\, y > 0\}.
\end{split}\]
Thus, the path $\gamma$ is folded onto itself in the invariant region
$\mathcal{S}$ by the action of system $\mathrm{S}_{1}$ as shown in Fig.~\ref{fig-4}.

\begin{figure}[htb]
          \centering
          \includegraphics[width=1\textwidth]{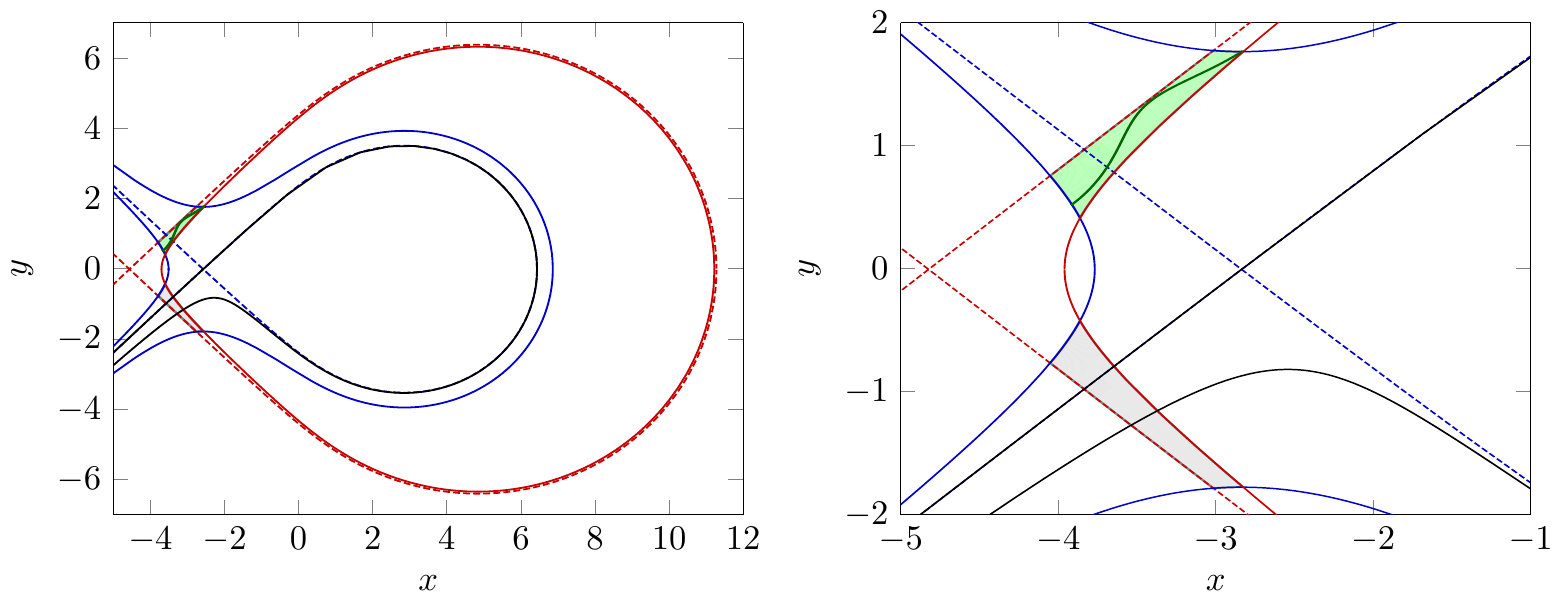}
          \caption{\small Left: Representation of a generic path $\gamma$ (green) in the topological rectangle $\mathcal{M}$ joining $\mathcal{M}^{-}_{l}$ with $\mathcal{M}^{-}_{r}$
          and its image (black) at time $t_{1}$ under the action of the system $(\mathrm{S}_{1})$ where $f(u)=\sqrt{1+x^{2}}-1$ and $k=2$.
          Right: Zooming of the two crossings between the image of the curve $\gamma$ with the topological rectangle $\mathcal{N}$.}
\label{fig-4}
\end{figure}

Now we set
\[\begin{split}
& s''_{A}:=\max\{s\in [0,\bar{s}^{\flat}]: \Psi_{1}(\gamma(s))\in \mathcal{N}^{-}_{l}\},\\
& s'_{A}:=\max\{s\in [0,s''_{A}]: \Psi_{1}(\gamma(s))\in \mathcal{N}^{-}_{r}\}.
\end{split}\]
By definition, $\gamma(s) \in \mathcal{M}\cap \mathcal{S}_{A}$
and  $\Psi_{1}(\gamma(s)) \in \mathcal{N}$ for all $s\in [s'_{A},s''_{A}]$
with $\Psi_{1}(\gamma(s'_{A}))\in \mathcal{N}^{-}_{r}$ and $\Psi_{1}(\gamma(s''_{A}))\in \mathcal{N}^{-}_{l}$.
Analogously, we define
\[\begin{split}
& s''_{B}:=\max\{s\in [\bar{s}^{\sharp},1]: \Psi_1(\gamma(s))\in \mathcal{N}^{-}_{r}\},\\
& s'_{B}:=\max\{s\in [\bar{s}^{\sharp},s''_{B}]: \Psi_1(\gamma(s))\in \mathcal{N}^{-}_{l}\},
\end{split}\]
and we observe that $\gamma(s) \in \mathcal{M}\cap \mathcal{S}_{B}$
and  $\Psi_{1}(\gamma(s)) \in \mathcal{N}$ for all $s\in [s'_{B},s''_{B}]$
with $\Psi_{1}(\gamma(s'_{B}))\in \mathcal{N}^{-}_{l}$ and $\Psi_1(\gamma(s''_{B}))\in \mathcal{N}^{-}_{r}$.

For any $t_{1} > \tau^{*}_1$ fixed, using an elementary continuity argument,
we can determine a (small) open neighborhood $\mathscr{W}$ of $W^{s}(x_u(k_1))\cap {\mathcal M}$
such that $y(t) > 0$ for all $t\in [0,t_1],$ whenever $(x(0),y(0))\in \mathscr{W}$.
Thus, finally, if we define
\[
\mathcal{K}_{1,0}:= \mathcal{M}\cap \mathcal{S}_{A} \setminus \mathscr{W}, \quad
\mathcal{K}_{1,1}:= \mathcal{M}\cap \mathcal{S}_{B} \setminus \mathscr{W},
\]
then, in accord with Definition~\ref{def-SAP}, we have determined two disjoint compact sets such that satisfy the
SAP condition with crossing number 2:
\[(\mathcal{K}_{1,0},\Psi_2)\colon\tilde{\mathcal{M}}\mathrel{\Bumpeq\!\!\!\!\!\!\longrightarrow^{2}}\tilde{\mathcal{N}},
\quad (\mathcal{K}_{1,1},\Psi_2)\colon\tilde{\mathcal{M}}\mathrel{\Bumpeq\!\!\!\!\!\!\longrightarrow^{2}}\tilde{\mathcal{N}}.\]

At last we consider system $(\mathrm{S}_{2})$ and we prove the stretching property formulated in Step~II.
Note that each solution of a Cauchy problem with initial
conditions taken in $\mathcal{N}$ evolves through the action of
$(\mathrm{S}_{2})$ inside the annular region $\mathcal{A}$ which is invariant for the
associated flow. Once the point $(b,0)$ is fixed as a center for polar coordinates, if the time increases, then
all the points of $\mathcal{A}\setminus\{(x_{u}(k_2),0)\}$ move along the energy
level lines of $(\mathrm{S}_{2})$ in the clockwise sense. For our purposes, it will be convenient to
introduce an angular variable starting from the half-line $L:=\{(r,0): r < b\}$ and
counted positive clockwise from the reference axis $L.$ In this manner all the points of
$\mathcal{N}$ are determined by an angle $\vartheta \in\,]-\pi/2,0[$ (mod $2\pi$),
while those of $\mathcal{M}$ are determined by an angle $\vartheta \in\,]0,\pi/2[$ (mod $2\pi$).
In other words, for our auxiliary polar coordinate system, the region $\mathcal{N}$
(respectively, $\mathcal{M}$) lies in the interior of the fourth quadrant (respectively, first quadrant).
Any solution $u(t)$ of $u'' + f(u) = k_2$ with $(u(0),u'(0))\in \mathcal{N}^{-}_{r}$
starts with $u(0) >  x_{u}(k_{2})$ and a negative slope, it tends as $t\to +\infty$
to the saddle point of $(\mathrm{S}_2)$ along the homoclinic orbit, with $u(t)$ decreasing and $u'(t)$ increasing.
On the other hand, any solution with $(u(0),u'(0))\in \mathcal{N}^{-}_{l}$ is periodic with period equal to
the fundamental period of the orbit $\mathcal{O}_{D}$, that we denote by
\[
\mathcal{T}_{\scriptscriptstyle\mathcal{O}_{D}}:=2\, \tau_{\scriptscriptstyle\mathcal{O}}(D;\, x_{+}(D)),
\]
by means of \eqref{tau-o}.
If we take any path in $\mathcal{N}$ connecting ${\mathcal N}^{-}_{r}$ to ${\mathcal N}^{-}_{l}$
we have that its image under the action of the flow of $(\mathrm{S}_2)$ looks like a spiral curve
contained in $\mathcal{A}$ which winds a certain  number of times around the center. In order to
formally prove this fact and to evaluate the precise number of revolutions, we denote by
$\vartheta(t,z)$ the angle at the time $t\geq 0$ associated with the solution $(x(t),y(t))$ of
system $(\mathrm{S}_{2})$ such that $(x(0),y(0)) = z\in \mathcal{N}.$ By the previous considerations and the
choice of a clockwise orientation, we know that $\tfrac{d}{dt}\vartheta(t,z) > 0$ for all $z\in \mathcal{N}.$
For our next computations we need also to introduce the time needed to go
from the point $\big(b,-\sqrt{2(D-\Phi_{k_{2}}(b))}\big)$
to the point $\big(b,\sqrt{2(D-\Phi_{k_{2}}(b))}\big)$ along $\mathcal{O}_{D}$, which is given by
\[\tau_{\scriptscriptstyle\mathcal{O}_{D}}:=\tau_{\scriptscriptstyle\mathcal{O}}(D;\, b),\]
consistently with \eqref{tau-o}. Given $m\geq 1$, we fix
\begin{equation}\label{eq-tau2}
\tau_{2}^{*}:=\tau_{\scriptscriptstyle\mathcal{O}_{D}} + (m-1) \mathcal{T}_{\scriptscriptstyle\mathcal{O}_{D}}.
\end{equation}
We claim that for each fixed time $t_{2} >\tau_{2}^{*}$ the SAP property holds for the Poincar\'{e} map $\Psi_{2}$
with crossing number (at least) $m.$ A visualization of this step for $m=1$ is given in Figure \ref{fig-5}.

\begin{figure}[htb]
          \centering
          \includegraphics[width=1\textwidth]{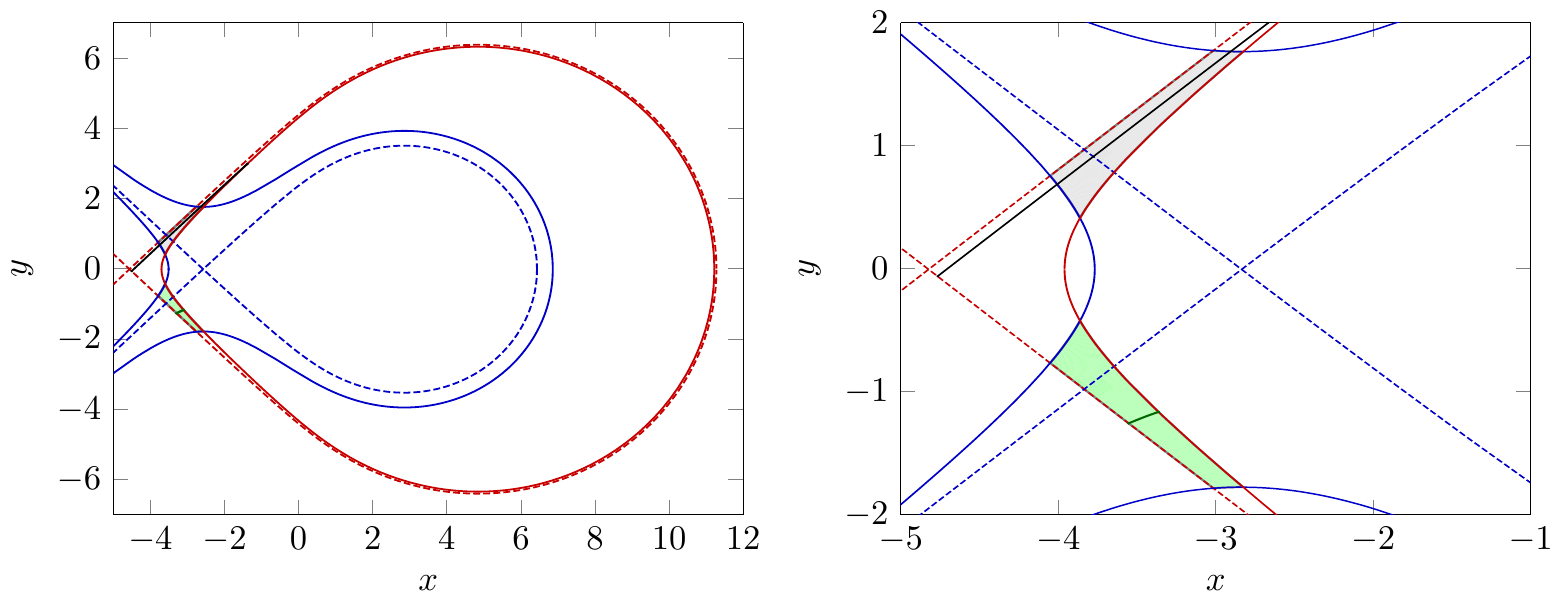}
          \caption{Left: Representation of a generic path $\gamma$ (green) in the topological rectangle $\mathcal{N}$ joining $\mathcal{N}^{-}_{r}$ with $\mathcal{N}^{-}_{l}$
          and its image (black) at time $t_{1}+t_{2}$ under the action of the system $(\mathrm{S}_{2})$ where $f(u)=\sqrt{1+x^{2}}-1$ and $k=4$.
          Right: Zooming of the crossing between the image of the curve $\gamma$ with the topological rectangle $\mathcal{M}$.}
\label{fig-5}
\end{figure}

By the previous observations, we have that
\[\begin{split}
&\vartheta(t_{2},z) < 0, \quad\forall\, z\in \mathcal{N}^{-}_{r},\\
&\vartheta(t_{2},z) >  \frac{\pi}{2} + 2(m-1)\pi, \quad\forall\, z\in \mathcal{N}^{-}_{l}.
\end{split}\]
This allows us to introduce $m$ nonempty subsets $\mathcal{K}_{2,0}\,,\dots, \mathcal{K}_{2,m-1}$
of $\mathcal{N}$ which are pairwise disjoint and compact. They are defined by
\[
\mathcal{K}_{2,i}:= \{z\in {\mathcal N}: \vartheta(t_{2},z) \in [2i\pi,(\pi/2) + 2i\pi]\}, \quad\forall i\in\{0,\dots, m-1\}.
\]

Let $\gamma:[0,1]\to\mathcal{N}$ be a continuous path with $\gamma(0)\in\mathcal{N}^{-}_{r}\subseteq \mathcal{H}(x_{u}(k_{2}))$ and
$\gamma(1)\in\mathcal{N}^{-}_{l}\subseteq \mathcal{O}_{D}$. We fix also an index $i\in \{0,\dots,m-1\}.$
First of all, observe that, by the continuity of $\gamma$ there exists $\bar{s}^{\flat}_{i},\bar{s}^{\sharp}_{i} \in]0,1[$
with $\bar{s}^{\flat}_{i} < \bar{s}^{\sharp}_{i}$ such that
\[\begin{split}
&\vartheta(t_{2},\gamma(\bar{s}^{\flat}_{i})) = 2i\pi,\\
&\vartheta(t_{2},\gamma(\bar{s}^{\sharp}_{i})) = \frac{\pi}{2} + 2i\pi,
\end{split}\]
and
\[
2i\pi < \vartheta(t_2,\gamma(s)) < \frac{\pi}{2} + 2i\pi, \quad\forall\,
\bar{s}^{\flat}_{i} < s < \bar{s}^{\sharp}_{i}\,.
\]
For ease of notation, we define as $\mathcal{A}_{\mathrm{I}}$
the intersection of $\mathcal{A}$ with the first quadrant of the auxiliary polar coordinate system
and we also consider the following two segments
\[\begin{split}
&\mathcal{A}_{\mathrm{I}}^{x}:=\left[x_{u}(k_{2}),d\right]\times\{0\},\\
&\mathcal{A}_{\mathrm{I}}^{y}:=\{b\}\times \left[\sqrt{2(D-\Phi_{k_{2}}(b))},\sqrt{2(\Phi_{k_2}(x_u(k_{2}))-\Phi_{k_{2}}(b))}\right],
\end{split}\]
which are on the boundary of $\mathcal{A}_{\mathrm{I}}$. By construction,
the image $\Psi_{2}\circ \gamma|_{[\bar{s}^{\flat}_{i}, \bar{s}^{\sharp}_{i}]}$ is contained in
 $\mathcal{A}_{\mathrm{I}}$ and joins  $\mathcal{A}_{\mathrm{I}}^x$ to $\mathcal{A}_{\mathrm{I}}^y.$ On the other hand,
 the set $\mathcal{M}$ as well as its sides $\mathcal{M}^{-}_{l}$ and ${\mathcal M}^-_{r}$
 separate $\mathcal{A}_{\mathrm{I}}^x$ and $\mathcal{A}_{\mathrm{I}}^y$ inside  $\mathcal{A}_{\mathrm{I}}.$
 An elementary connectivity argument, allows to determine $s'_{i}$ and $s''_{i}$ with
 $\bar{s}^{\flat}_{i} < s'_{i} < s''_{i} < \bar{s}^{\sharp}_{i}$ such that
$\Psi_{2}(\gamma(s'_{i})) \in \mathcal{M}^{-}_{l}$, $\Psi_{2}(\gamma(s''_{i})) \in \mathcal{M}^{-}_{r}$
and, $\Psi_{2}(\gamma(s)) \in \mathcal{M},$ for all $s\in [ s'_{i},s''_{i}].$
Moreover, $\gamma(s) \in \mathcal{K}_{2,i}$ for all $s\in [ s'_{i},s''_{i}].$
In this way our claim is verified because
\[(\mathcal{K}_{2,i},\Psi_2)\colon\tilde{\mathcal N}\mathrel{\Bumpeq\!\!\!\!\!\!\longrightarrow}\tilde{\mathcal M},
\quad\forall\, i\in\{0,\dots,m-1\}.\]

At the end, from Step~I and Step~II we can conclude that there exists a topological horseshoe for the Poincar\'{e} map
$\Psi = \Psi_{2}\circ \Psi_{1}$ with full dynamics on $2 \times m$ symbols.
\end{proof}

\begin{remark}
The above proof can be adapted to cover some slight variants of the theorem.
In particular, the following two observations can be made.
\begin{itemize}
\item[$(i)$\;]
We have proved Theorem~\ref{th-3.5} for a stepwise forcing term. However, the result still holds for a forcing term
$p(t)$ which can be also smooth, although near to $p_{k_1,k_2}(t)$
in the $L^1$-norm. In fact, such a result is stable with respect to small perturbations in the following sense:
for any choice of $t_{1}>\tau^{*}_{1}$ and $t_{2}>\tau^{*}_{2}$ (so that $T = t_1 + t_2$ is fixed)
there exists an $\varepsilon_{0}>0$ such that, for all $c$ with $|c|<\varepsilon_{0}$ and every
forcing term $p(t)$ such that $\int_{0}^{T}|p(t)-p_{k_{1},k_{2}}(t)| \,dt<\varepsilon_{0},$
the conclusion of Theorem~\ref{th-3.5} holds for system $(\mathscr{S}_2)$.
\item[$(ii)$\;]
In Theorem~\ref{th-3.5} we have assumed $0 < k_1 < k_2$. With a minor effort, one can easily adapt the proof also to the choice of $0 = k_1 < k_2$.
Notice that in this case, the associated dynamical system is described by a switch between two autonomous systems whose portraits are
shown in Fig.~\ref{fig-1}. The regions ${\mathcal M}$ and ${\mathcal N}$ are now determined by the intersection
of an annular region and a topological strip.
\end{itemize}
\end{remark}

As already mentioned, a peculiar aspect of our approach is due to the possibility to provide estimates from below for the period of the forcing
term, as given in \eqref{eq-tau1} and \eqref{eq-tau2}. In some cases, these constants can be easily determined and are not necessarily large.
To conclude the paper, an example in this direction is given. To simplify the treatment, we restrict ourselves
to consider the nonlinearity $f(s) = |s|.$
Clearly the function $f$ is not smooth, but since the result is stable for small perturbations,
our example can be adapted to the case of a smooth nonlinearity sufficiently near to the absolute value.

\begin{example}
Let us consider the second order ODE $u'' + |u| = p_{0,2}(t)$
for $T = t_1 + t_2$,
which is equivalent to the differential system
\begin{equation}\label{eq-ex1}
\begin{cases}
x' = y,\\
y' = - |x| +p_{0,2}(t),
\end{cases}
\end{equation}
where the forcing term $p_{0,2}(t)$ is defined according to \eqref{eq-3.2.1}.
Then, using essentially the same argument developed in order to prove Theorem~\ref{th-3.5}, symbolic dynamics on two symbols can be detected for system \eqref{eq-ex1}.
With this respect, we consider two regions in the phase plane defined as follows
\[\begin{split}
&\mathcal{S}:= \{(x,y)\in\mathbb{R}^{2} : 0\leq E_{0}(x,y)\leq 8 \},\\
&\mathcal{A}:= \{(x,y)\in\mathbb{R}^{2}: \rho_{\epsilon}\leq E_{2}(x,y)\leq 2\},
\end{split}\]
where $\rho_{\epsilon}:=(-\epsilon^{2}+4\epsilon)/2$ with $\epsilon>0$ a sufficiently small fixed real value.
The strip region $\mathcal{S}$ is obtained from the equation $u'' + |u| = 0$
by considering the area between the following associated level lines:
the unbounded orbit $\mathcal{U}_{8}$ passing through the point $(4, 0)$
and the line $x = -|y|$ made by the unstable equilibrium point $(x_{u}(0),0)=(0,0)$, the stable manifold $\mathcal{W}^{s}(0)$ and the unstable one $W^{u}(0)$.
To construct the annular region $\mathcal{A}$, we consider the equation $u'' + |u| = 2$ and from its phase portrait we select
the area between the homoclinic orbit $\mathcal{H}(-2)$ at the saddle point $(x_{u}(2),0)=(-2,0)$ and a periodic orbit $\mathcal{O}_{\rho_{\epsilon}}$
that passes through $(\epsilon,0)$ which is a point very close to the origin.
Dealing with $u'' + |u| = 2$ we can observe that all the periodic orbits enclosing the stable center $(2,0)$
and contained in the right-half phase plane are isochronous with period $2\pi$.
Now, we set the topological rectangles as follows
\[
\begin{split}
&\mathcal{M}:= \mathcal{A}\cap \mathcal{S}\cap \{(x,y)\in\mathbb{R}^{2}: y>0   \}, \\
&\mathcal{N}:= \mathcal{A}\cap \mathcal{S}\cap \{(x,y)\in\mathbb{R}^{2}: y<0   \},
\end{split}
\]
and the orientation is analogous to the one just given in the proof of the previous theorem.

To apply the SAP method we require the following time mapping estimates.
First, the time needed to move along $\mathcal{U}_{8}$ from the point $(2\sqrt{2},2\sqrt{2})$ to the point $(2\sqrt{2},-2\sqrt{2})$, which is
\[\tau_{\scriptscriptstyle\mathcal{U}_{8}}:=\tau_{\scriptscriptstyle\mathcal{U}}(8;\, 4)=2\int^{4}_{2\sqrt{2}}\frac{ds}{\sqrt{16-x^{2}}}=\frac{\pi}{2}.
\]
Next, the period $\tau_{\scriptscriptstyle\mathcal{O}_{\rho_{\epsilon}}}$ of the periodic orbit $\mathcal{O}_{\rho_{\epsilon}}$, which is
$\tau_{\scriptscriptstyle\mathcal{O}_{\rho_{\epsilon}}}\sim 2\pi$ when $\epsilon$ is chosen small enough.
In this way, by fixing $t_{1}>\pi/2$, the image at time $t=t_{1}$ of any continuous path contained in $\mathcal{M}$
which connects $\mathcal{M}^{-}_{l}$ to $\mathcal{M}^{-}_{r}$,
is stretched under the action of system \eqref{eq-ex1} in another continuous path and for it
one can find a sub-path entirely contained in $\mathcal{N}$ which connects $\mathcal{N}^{-}_{l}$ to $\mathcal{N}^{-}_{r}$.
Provided that $t_{2}>4\pi$, the previous sub-path is again stretched by system \eqref{eq-ex1}
and at time $t=T=t_{1}+t_{2}$ its image has revolved at least twice around the center $(2,0)$.
From this image, which is a spiral-like curve, we can detect two sub-paths in $\mathcal{M}$
that join the two sides $\mathcal{M}^{-}_{l}$ and $\mathcal{M}^{-}_{r}$.
In conclusion, if the period of the forcing term $p_{0,2}(t)$ is such that $T>9\pi/2$, then
Theorem~\ref{th-app1} guarantees dynamics on $1\times 2$ symbols for system \eqref{eq-ex1}, that is a topological horseshoe occurs.

This toy model is useful to determine the number of eigenvalues of the differential operator
$u\mapsto u''$ with $T$-periodic boundary conditions, namely $\lambda_{j}=(j-1)^{2}(2\pi/T)^{2}$ for $j=1,2,\dots$, crossed by the nonlinearity.
Since $f'(+\infty)=1$ it follows that the range where complex dynamics take place is between $\lambda_{3}$ and $\lambda_{4}$.
\end{example}

%------------------------------------------------------------------------
%--- Section appendix
\section{Appendix: SAP method}
In this section we summarize the topological tool, called \emph{stretching along the paths} (SAP) method,
which is the core of the application in Section~\ref{section-3.2}.
The appendix will be conform to the framework of the paper for the convenience of the reader.
We refer to \cite{MPZ-09,PaZa-04} for a detailed presentation of that theory.
First, we provide some basic notations and definitions.

\begin{definition}
Let $\mathcal{R}$ be a set homeomorphic to $[0,1]\times[0,1]$.
The pair $\tilde{\mathcal{R}}:=(\mathcal{R},\mathcal{R}^{-})$ is called \emph{oriented topological rectangle}
if $\mathcal{R}^{-}=\mathcal{R}^{-}_{l}\cup\mathcal{R}^{-}_{r}$,
where $\mathcal{R}^{-}_{l}$ and $\mathcal{R}^{-}_{r}$ are two disjoint compact arcs contained in $\partial\mathcal{R}.$
\end{definition}

\begin{definition}\label{def-SAP}
Given two topological oriented rectangles $\tilde{\mathcal{M}}:=(\mathcal{M},\mathcal{M}^{-})$,
$\tilde{\mathcal{N}}:=(\mathcal{N},\mathcal{N}^{-})$ and a continuous map $\phi:\mathrm{dom}\,\phi\subseteq\mathbb{R}^{2}\to\mathbb{R}^{2}$,
we say that $\phi$ \emph{stretches $\tilde{\mathcal{M}}$ to $\tilde{\mathcal{N}}$ along the paths}
and we write
\[(\mathcal{K},\phi)\colon\tilde{\mathcal M}\mathrel{\Bumpeq\!\!\!\!\!\!\longrightarrow}\tilde{\mathcal N}\]
if $\mathcal{K}$ is a compact subset of $\mathcal M\cap\mathrm{dom}\,\phi$
and for every path $\gamma\colon[0,1]\to\mathcal{M}$ such that $\gamma(0)\in\mathcal{M}^{-}_{l}$
and $\gamma(1)\in\mathcal{M}^{-}_{r}$ (or vice-versa), there exists a subinterval $[t',t'']\subseteq[0,1]$ such that
\begin{itemize}
\item $\gamma(t)\in \mathcal{K}$ for all $t\in[t',t'']$,
\item $\phi(\gamma(t))\in\mathcal{N}$ for all $t\in[t',t'']$,
\item $\phi(\gamma(t'))$ and $\phi(\gamma(t''))$ belong to different components of $\mathcal{N}^{-}$.
\end{itemize}

\noindent
Given a positive integer $m$, we say that $\phi$ \emph{stretches $\tilde{\mathcal{M}}$ to $\tilde{\mathcal{N}}$ along the paths with crossing number} $m$
and we write
\[(\mathcal{K},\phi)\colon\tilde{\mathcal M}\mathrel{\Bumpeq\!\!\!\!\!\!\longrightarrow^{m}}\tilde{\mathcal N}\]
if  there exist $m$ pairwise disjoint compact sets $\mathcal{K}_{0},\dots,\mathcal{K}_{m-1}\subseteq \mathcal M\cap\mathrm{dom}\,\phi$
such that $(\mathcal{K}_{i},\phi)\colon\tilde{\mathcal M}\mathrel{\Bumpeq\!\!\!\!\!\!\longrightarrow}\tilde{\mathcal N}$ for each $i\in\{0,\dots,m-1\}$.
\end{definition}

Clearly, the case $m\geq 2$ is the more interesting one in the applications because it lays the groundwork
for achieve a very rich symbolic dynamic structure.
Before linking the notion of ``stretching along the paths'' with the one of ``chaos''
we have to recall what do we mean when a topological horseshoe occurs.
Inspired by the definition of chaos in the sense of coin tossing or in the sense of Block-Coppel we introduce what follows.

\begin{definition}\label{def-app1}
Let $\phi:\mathrm{dom}\,\phi\subseteq\mathbb{R}^{2}\to\mathbb{R}^{2}$ be a map and
let $\mathcal{D}\subseteq\mathrm{dom}\,\phi$ be a nonempty set.
We say that \emph{$\phi$ induces chaotic dynamics on $m\geq 2$ symbols on a set $\mathcal{D}$}
if there exist $m$ nonempty pairwise disjoint compact sets
\[\mathcal{K}_{0}, \dots, \mathcal{K}_{m-1}\subseteq\mathcal{D}\]
such that for each two-sided sequence $(s_{i})_{i\in\mathbb{Z}}\in\{0,\dots,m-1\}^{\mathbb{Z}}$
there exists a corresponding sequence $(w_{i})_{i\in\mathbb{Z}}\in\mathcal{D}^{\mathbb{Z}}$ such that
\begin{equation}\label{eq-ap1}
w_{i}\in\mathcal{K}_{s_{i}} \text{ and } w_{i+1}=\phi(w_{i}) \text{ for all } i\in\mathbb{Z},
\end{equation}
and, whenever $(s_{i})_{i\in\mathbb{Z}}$ is a $k$-periodic sequence for some $k\geq1$ there exists a
$k$-periodic sequence $(w_{i})_{i\in\mathbb{Z}}\in\mathcal{D}^{\mathbb{Z}}$ satisfying \eqref{eq-ap1}.
\end{definition}

It is important to note that Definition~\ref{def-app1} has several relevant consequences which are discussed in \cite[Th.~2.2]{MPZ-09}
and in \cite{MRZ-10,PPZ-08}.
In particular, for a one-to-one map $\phi$, it ensures the existence of a nonempty compact invariant set
$\Lambda\subseteq\cup^{m-1}_{i=0}\mathcal{K}_{i}\subseteq\mathcal{D}$ such that
$\phi_{|\Lambda}$ is semiconjugate to the Bernoulli shift map on $m\geq 2$ symbols by a continuous surjection $\Pi$.
Moreover, it guarantees that the set of the periodic points of $\phi$ is dense in $\Lambda$ and,
for all two-sided periodic sequence $S\in\Sigma_{m}$, the preimage $\Pi^{-1}(S)$ contains a periodic point of $\phi$ with the same period.
In view of these properties, we recognize exactly the requirements needed
to assert that a topological horseshoe occurs (cf.~Introduction).

Finally, in order to detect chaos, an useful topological tool is the following result
which takes into account the particular nature of switched systems we deal with.

\begin{theorem}[\protect{\cite[Th.~2.1]{MRZ-10}}]\label{th-app1}
Let $\varphi:\mathrm{dom}\,\varphi\subseteq\mathbb{R}^{2}\to\mathbb{R}^{2}$ and
$\psi:\mathrm{dom}\,\psi\subseteq\mathbb{R}^{2}\to\mathbb{R}^{2}$ be continuous maps.
Let $\tilde{\mathcal{M}}=(\mathcal{M},\mathcal{M}^{-})$ and $\tilde{\mathcal{N}}=(\mathcal{N},\mathcal{N}^{-})$
be oriented rectangles in $\mathbb{R}^{2}$. Suppose that
\begin{itemize}
\item there exist $n\geq 1$ pairwise disjoint compact subsets of $\mathcal{M}\,\cap\,\mathrm{dom}\,\varphi$,
$\mathcal{Q}_{0},$ $\dots,$ $\mathcal{Q}_{n-1}$, such that
$(\mathcal{Q}_{i},\varphi)\colon\tilde{\mathcal M}\mathrel{\Bumpeq\!\!\!\!\!\!\longrightarrow}\tilde{\mathcal N}$ for $i=0,\dots,n-1$,
\item there exist $m\geq 1$ pairwise disjoint compact subsets of $\mathcal{N}\,\cap\,\mathrm{dom}\,\psi$, 
$\mathcal{K}_{0},$ $\dots,$ $\mathcal{K}_{m-1}$, such that
$(\mathcal{K}_{i},\psi)\colon\tilde{\mathcal N}\mathrel{\Bumpeq\!\!\!\!\!\!\longrightarrow}\tilde{\mathcal M}$ for $i=0,\dots,m-1$.
\end{itemize}
If at least one between $n$ and $m$ is greater or equal than $2$, then the map $\phi=\psi\circ\varphi$
induces chaotic dynamics on $n\times m$ symbols on
\[\mathcal{Q}^{*}=\bigcup_{\substack{i=1,\dots,n \\j=1,\dots,m}} \mathcal{Q}_{i}\cap\varphi^{-1}(\mathcal{K}_{j}).\]
\end{theorem}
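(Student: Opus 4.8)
The plan is to deduce the statement from two ingredients: a \emph{composition} (transitivity) property of the stretching-along-the-paths relation, and the structural fact at the heart of the SAP machinery that a continuous map which stretches an oriented rectangle \emph{to itself} with crossing number at least two automatically induces chaotic dynamics in the sense of Definition~\ref{def-app1}. Concretely, I would first prove that $\phi=\psi\circ\varphi$ stretches $\tilde{\mathcal M}$ across itself with crossing number $n\times m$, the $nm$ witnesses being the sets $\mathcal{Q}_{i}\cap\varphi^{-1}(\mathcal{K}_{j})$, and then invoke the self-stretching chaos result to obtain chaotic dynamics on $n\times m$ symbols on their union $\mathcal{Q}^{*}$.

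\textbf{Composition step.} Fix a pair $(i,j)$ and let $\gamma\colon[0,1]\to\mathcal{M}$ be any path with $\gamma(0)\in\mathcal{M}^{-}_{l}$ and $\gamma(1)\in\mathcal{M}^{-}_{r}$. Since $(\mathcal{Q}_{i},\varphi)\colon\tilde{\mathcal M}\mathrel{\Bumpeq\!\!\!\!\!\!\longrightarrow}\tilde{\mathcal N}$, there is a subinterval $[t',t'']\subseteq[0,1]$ on which $\gamma(t)\in\mathcal{Q}_{i}$ and $\varphi(\gamma(t))\in\mathcal{N}$, with $\varphi(\gamma(t'))$ and $\varphi(\gamma(t''))$ in different components of $\mathcal{N}^{-}$. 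Hence $\sigma:=\varphi\circ\gamma|_{[t',t'']}$, reparametrised linearly on $[0,1]$, is a path joining $\mathcal{N}^{-}_{l}$ to $\mathcal{N}^{-}_{r}$ (up to orientation). Applying $(\mathcal{K}_{j},\psi)\colon\tilde{\mathcal N}\mathrel{\Bumpeq\!\!\!\!\!\!\longrightarrow}\tilde{\mathcal M}$ to $\sigma$ returns a subinterval which pulls back to some $[s',s'']\subseteq[t',t'']$, along which $\varphi(\gamma(t))\in\mathcal{K}_{j}$ and $\psi(\varphi(\gamma(t)))\in\mathcal{M}$, the endpoints landing in distinct components of $\mathcal{M}^{-}$. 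On $[s',s'']$ one then has simultaneously $\gamma(t)\in\mathcal{Q}_{i}$ and $\gamma(t)\in\varphi^{-1}(\mathcal{K}_{j})$, so $\gamma(t)\in\mathcal{Q}_{i}\cap\varphi^{-1}(\mathcal{K}_{j})$, while $\phi(\gamma(t))\in\mathcal{M}$ with $\phi(\gamma(s'))$, $\phi(\gamma(s''))$ separated in $\mathcal{M}^{-}$. This is exactly $(\mathcal{Q}_{i}\cap\varphi^{-1}(\mathcal{K}_{j}),\phi)\colon\tilde{\mathcal M}\mathrel{\Bumpeq\!\!\!\!\!\!\longrightarrow}\tilde{\mathcal M}$, and it also shows the set is nonempty.

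Next I would record that the $nm$ sets $\mathcal{Q}_{i}\cap\varphi^{-1}(\mathcal{K}_{j})$ are compact — each is a closed subset of the compact $\mathcal{Q}_{i}$, since $\varphi^{-1}(\mathcal{K}_{j})$ is closed by continuity of $\varphi$ — and pairwise disjoint: distinct first indices are separated because the $\mathcal{Q}_{i}$ are, and distinct second indices because $\varphi^{-1}(\mathcal{K}_{j})\cap\varphi^{-1}(\mathcal{K}_{j'})=\varphi^{-1}(\mathcal{K}_{j}\cap\mathcal{K}_{j'})=\emptyset$. Thus $\phi$ stretches $\tilde{\mathcal M}$ to itself with crossing number $n\times m$. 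To finish, I would appeal to the structural result underlying Definition~\ref{def-app1} (as in \cite{MPZ-09,MRZ-10}): a map stretching an oriented rectangle to itself with crossing number $N\ge2$ induces chaotic dynamics on $N$ symbols on the union of the witnessing sets. An arbitrary bi-infinite itinerary $(s_{i})_{i\in\mathbb{Z}}$ is realised by iterating the single-crossing property — for each finite window the nested compacta $\bigcap_{k}\phi^{-k}(\mathcal{H}_{s_{k}})$ (with $\mathcal{H}_{\bullet}$ the relabelled witnesses) still contain a continuum joining the two distinguished sides, hence are nonempty, and a sequential-compactness argument produces a point whose full orbit respects the itinerary — while the periodic part follows from the companion fixed-point principle, applied to the iterate $\phi^{k}$ along the composed relation.

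The genuinely hard ingredient is this last self-stretching-implies-chaos theorem, and within it the periodic-orbit assertion: it is topological rather than computational, resting on a Poincar\'{e}--Miranda / degree-type fixed-point statement guaranteeing that a map crossing a rectangle over itself has a fixed point. By contrast, the composition step is elementary continuity and point-set topology, and the bi-infinite orbit construction is a routine nested-intersection and diagonal argument once the crossing number of the composed map is known. Since that deep fixed-point machinery is precisely what is quoted from \cite{MRZ-10}, the real content of the present proof lies in the reduction carried out in the composition step.
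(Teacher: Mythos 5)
Your proposal is correct, but note that the paper itself contains no proof of this statement: it is imported verbatim as \cite[Th.~2.1]{MRZ-10}, so the only meaningful comparison is with the proof in that reference and the surrounding SAP literature --- and there your route is essentially the standard one. The argument behind \cite[Th.~2.1]{MRZ-10} is exactly your two-step reduction: a composition (transitivity) lemma showing $(\mathcal{Q}_{i}\cap\varphi^{-1}(\mathcal{K}_{j}),\phi)\colon\tilde{\mathcal{M}}\mathrel{\Bumpeq\!\!\!\!\!\!\longrightarrow}\tilde{\mathcal{M}}$ for each of the $nm$ pairs, followed by the basic single-map result that self-stretching with crossing number at least two induces chaotic dynamics on the union of the witness sets (the hypothesis that one of $n,m$ is $\geq 2$ enters only to guarantee $nm\geq 2$). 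Your composition step is carried out correctly: pulling the subinterval produced by the $\psi$-stretching hypothesis back through the reparametrisation of $\sigma=\varphi\circ\gamma|_{[t',t'']}$ yields $[s',s'']\subseteq[t',t'']$ on which simultaneously $\gamma(s)\in\mathcal{Q}_{i}$, $\varphi(\gamma(s))\in\mathcal{K}_{j}$ and $\phi(\gamma(s))\in\mathcal{M}$, with the required endpoint separation in $\mathcal{M}^{-}$; disjointness and nonemptiness of the witnesses are also handled properly. Two quibbles only. First, $\varphi^{-1}(\mathcal{K}_{j})$ is closed merely in the relative topology of $\mathrm{dom}\,\varphi$, which need not be closed in $\mathbb{R}^{2}$; compactness of $\mathcal{Q}_{i}\cap\varphi^{-1}(\mathcal{K}_{j})$ still follows, but because $\mathcal{Q}_{i}$ is a compact subset of $\mathrm{dom}\,\varphi$, not because $\varphi^{-1}(\mathcal{K}_{j})$ is closed in the plane. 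Second, your closing appeal to the single-map chaos theorem (nested compact continua for arbitrary itineraries, a fixed-point principle of Poincar\'{e}--Miranda type for the periodic ones) is sketched and cited rather than proved, so the genuinely deep ingredient remains outsourced to \cite{MPZ-09,PaZa-04}; that is a fair division of labor, and in fact less is outsourced in your write-up than in the paper under review, which quotes the entire theorem without proof.
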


We observe that the trick of the method is the verification of some stretching properties for the maps $\varphi$ and $\psi$. In our context, it applies to the Poincar\'e map $\Psi=\Psi_{2}\circ\Psi_{1}$ associated with system $(\mathscr{S}_{1})$ which is a homeomorphism of $\mathrm{dom}\,\Psi$ onto its image.

%--------------------------------------------
%--- Bibliography
\bibliographystyle{elsart-num-sort}
\bibliography{ReferenceSZ}

%\bigskip
%\begin{flushleft}
%
%{\small{\it Preprint}}
%
%{\small{\it June 2017}}
%
%\end{flushleft}
% ------------------------------------------------------------------------
\end{document}